\documentclass[a4paper,12pt,reqno,draft]{amsart}
\usepackage{amssymb,amsmath,array,amscd,amsthm,hhline,bibcheck}

\usepackage[mathscr]{euscript}
\usepackage{stmaryrd}
\usepackage{ulem}

\usepackage{tikz-cd}

\usepackage{mathrsfs}

%___

\makeatletter
\newcommand{\slunlhd}{%
  \mathrel{\mathpalette\sl@unlhd\relax}%
}

\newcommand{\sl@unlhd}[2]{%
  \sbox\z@{$#1\lhd$}%
  \sbox\tw@{$#1\leqslant$}%
  \dimen@=\ht\tw@
  \advance\dimen@-\ht\z@
  \ifx#1\displaystyle
    \advance\dimen@ .2pt
  \else
    \ifx#1\textstyle
      \advance\dimen@ .2pt
    \fi
  \fi
  \ooalign{\raisebox{\dimen@}{$\m@th#1\lhd$}\cr$\m@th#1\leqslant$\cr}%
}
\makeatother

\renewcommand\trianglelefteq{\slunlhd}

%___

\renewcommand{\labelenumi}{{\rm \theenumi}}
\renewcommand{\theenumi}{{\rm(\arabic{enumi})}}
\renewcommand\epsilon{\varepsilon}
\renewcommand\emptyset{\varnothing}

\def\lm{\lambda}
\def\Lm{\Lambda}
\renewcommand\kappa{\varkappa}
\def\bbeta{{\boldsymbol\beta}}
\def\aa{\mathbf{a}}

\def\simd{\triangle}

\def\<{\langle}
\def\>{\rangle}

\voffset=-55pt \hoffset=-1.8cm \textwidth=467pt \textheight=695pt
\headsep=35pt

\def\ito{\stackrel\sim\to}

\def\C{\mathbb C}

\def\Z{\mathbb Z}

\def\Q{\mathbb Q}

\newtheoremstyle{mytheoremstyle} % name
    {\topsep}                    % Space above
    {\topsep}                    % Space below
    {\rmfamily}                   % Body font
    {1em}                           % Indent amount
    {\bf}                   % Theorem head font
    {.}                          % Punctuation after theorem head
    {.5em}                       % Space after theorem head
    {}  % Theorem head spec (can be left empty, meaning ‘normal’)

\newtheorem{theorem}{Theorem}
\newtheorem{proposition}[theorem]{Proposition}
\newtheorem{lemma}[theorem]{Lemma}
\newtheorem{corollary}[theorem]{Corollary}
\newtheorem{remark}[theorem]{Remark}
\newtheorem{problem}{Problem}
%\newtheorem{definition}[theorem]{Definition}
%\newtheorem{example}[theorem]{Example}
%{\theoremstyle{mytheoremstyle}\newtheorem{remark}[theorem]{Remark}}
%\newtheorem{conjecture}[theorem]{Conjecture}

\def\le{\leqslant}
\def\ge{\geqslant}

\def\pt{{\rm pt}}

\def\df{{\rm d}}

\def\ll{\ell}
\def\b{{\mathbf b}}
\def\c{{\mathbf c}}

\def\Tr{\mathop{\rm Tr}\nolimits}

\def\Map{\mathop{\rm Map}}

\def\supp{\mathop{\rm supp}}

\def\id{\mathop{\rm id}}
\def\im{\mathop{\rm im}}

\def\SL{\mathop{\rm SL}}
\def\PSL{\mathop{\rm PSL}}

\def\rep{\mathop{\rm rep}}

\def\suchthat{\mathbin{\rm |}}
\def\and{\,\mathbin{\&}\,}

\def\Hom{\mathop{\rm Hom}\nolimits}
\def\Tor{\mathop{\rm Tor}\nolimits}

\def\pr{{\rm pr}}

\def\dlim{\mathop{\rm lim}\limits_{\longrightarrow}}

\def\Ext{\mathop{\rm Ext}\nolimits}

\def\OSigma{\bar{\Sigma}}
\def\OGamma{\bar{\Gamma}}

\def\OX{\bar{\X}}

\def\F{\mathscr F}
\def\X{\mathcal X}

\def\E{\mathscr E}

\def\P{\mathcal P}

\def\C{\mathbb C}

\renewcommand\phi{\varphi}

\def\={\equiv}

%\exhyphenpenalty=10000

\renewcommand{\(}{\left(}
\renewcommand{\)}{\right)}

\def\sectsign{\mathhexbox278}

\def\m{\mathfrak m}

\def\csh#1#2{\underline{#1}{}_{{}_{\scriptstyle #2}}}

\makeatletter

\def\linefill@#1{\m@th\setboxz@h{$#1-$}\ht\z@\z@
    $#1\copy\z@\mkern-6mu\cleaders
    \hbox{$#1\mkern-2mu\box\z@\mkern-2mu$}\hfill
    \mkern-2mu\vphantom{\rightarrow}$}

  \def\rightarrowfill@#1{\m@th\setboxz@h{$#1-$}\ht\z@\z@
    $#1\copy\z@\mkern-6mu\cleaders
    \hbox{$#1\mkern-2mu\box\z@\mkern-2mu$}\hfill
    \mkern-6mu\mathord\rightarrow$}

\def\edge#1#2#3{
\global\bigaw@1.8pc
\setboxz@h{$\m@th\scriptstyle\;{#2}\;\;$}%
  \ifdim\wdz@>\bigaw@\global\bigaw@\wdz@\fi
#1\stackrel{#2}{\mathop{\hbox to \bigaw@{\linefill@\displaystyle}}}#3
}
\def\edgeright#1#2#3{
\global\bigaw@1.8pc
\setboxz@h{$\m@th\scriptstyle\;{#2}\;\;$}%
  \ifdim\wdz@>\bigaw@\global\bigaw@\wdz@\fi
#1\stackrel{#2}{\mathop{\hbox to \bigaw@{\rightarrowfill@\displaystyle}}}#3
}
\makeatother

%\title{The costalk-to-stalk embedding for Bott-Samelson sheaves}
\title{Bases of T-equivariant cohomology of Bott-Samelson varieties}
\author{Vladimir Shchigolev}
\email{shchigolev\_vladimir@yahoo.com}

\begin{document}

\maketitle

\begin{abstract} We construct combinatorial bases of the $T$-equivariant cohomology $H^\bullet_T(\Sigma,k)$
of the Bott-Samelson variety $\Sigma$ under some mild restrictions on the field of coefficients $k$.
This bases allow us to prove the surjectivity of the restrictions
$H^\bullet_T(\Sigma,k)\to H^\bullet_T(\pi^{-1}(x),k)$ and
$H^\bullet_T(\Sigma,k)\to H^\bullet_T(\Sigma\setminus\pi^{-1}(x),k)$, where $\pi:\Sigma\to G/B$
is the canonical resolution. In fact, we also construct bases of the targets of these restrictions by
picking up certain subsets of certain bases of $H^\bullet_T(\Sigma,k)$ and restricting them
to $\pi^{-1}(x)$ or $\Sigma\setminus\pi^{-1}(x)$ respectively.

As an application, we calculate the cohomology of the costalk-to-stalk embedding for the direct image
$\pi_*\csh k\Sigma$. This algorithm avoids division by 2, which allows us to reestablish
2-torsion for parity sheaves in Braden's example~\cite{BW}.
\end{abstract}

%\tableofcontents

%\section{Notation and elementary properties}

\section{Introduction}

Let $\Sigma$ be a Bott-Samelson variety for a connected semisimple complex group $G$. In this paper, we study the $T$-equivariant
cohomology $H^\bullet_T(\Sigma,k)$, where $T$ is a maximal torus in $G$ and $k$ is a PID. The direction of our research
is mainly determined by H\"arterich's preprint~\cite{Haerterich}. However, this preprint uses difficult
Arabia's results~\cite{A89} and~\cite{A98}, which, as explicitly stated, are valid for the ring of coefficients $\Q$.
Therefore, we prefer not to use geometrical bases (coming from Bia\l ynicki-Birula cells) and construct combinatorial
bases instead. If the sequence of simple reflections determining $\Sigma$ has length $r$, then there are in total $2^{2^r-1}$ bases
$B_\rho$ of $H^\bullet_T(\Sigma,k)$ under some mild restriction on the characteristic of $k$
(Theorem~\ref{theorem:4} and Lemma~\ref{lemma:15}).

Let $\pi:\Sigma\to G/B$ be the canonical resolution and $x\in G/B$ be an arbitrary $T$-fixed point.
Using the previously constructed bases of $H^\bullet_T(\Sigma,k)$, we can construct
a basis of $H^\bullet_T(\pi^{-1}(x),k)$ as follows (Theorem~\ref{theorem:2}, Remark~\ref{remark:1} and Lemma~\ref{lemma:16}):
\begin{enumerate}
\itemsep=3pt
\item\label{step:1} choose an index $\rho$;
\item\label{step:2} choose a subset $M\subset B_\rho$;
\item\label{step:3} consider the restrictions $\{f|_{\pi^{-1}(x)}\suchthat f\in M\}$.
\end{enumerate}
This fact automatically implies that the restriction $H^\bullet_T(\Sigma,k)\to H^\bullet_T(\pi^{-1}(x),k)$ is surjective.

One may naturally ask what happens if we consider the compliment $\Sigma\setminus\pi^{-1}(x)$ instead
of $\pi^{-1}(x)$? It turns out that there exists a basis $H^\bullet_T(\Sigma\setminus\pi^{-1}(x),k)$
that can be constructed from a basis $B_\rho$ of $H^\bullet_T(\Sigma,k)$ by steps similar to steps~\ref{step:1}--\ref{step:3}
above (Theorem~\ref{theorem:3}, Remark~\ref{remark:2} and Lemma~\ref{lemma:17}).

A plausible motivation to consider the $T$-equivariant cohomology of $\Sigma\setminus\pi^{-1}(x)$ is to calculate the decomposition
of the direct image $\pi_*\csh k\Sigma$ into a direct sum of parity sheaves introduced in~\cite{Williamson_parity_sheaves}.
It was noted by the authors of this paper that the natural map $i_\lm^!{\mathcal F}\to i_\lm^*{\mathcal F}$ plays a decisive role
in determining such a decomposition at least when $k$ is a field (see~\cite[Proposition 2.26]{Williamson_parity_sheaves}).
Here $i_\lm$ is the embedding of a (closed) stratum. In this paper, we address the following question:

\begin{problem}\label{problem:1} Let $\pi:\Sigma\to G/B$ be a Bott-Samelson resolution and $x\in G/B$ be a $T$-fixed point.
Denote by $i_x:\{x\}\hookrightarrow G/B$ the natural embedding. How to calculate the map
$H_T^\bullet(\Sigma,i_x^!\pi_*\csh k\Sigma)\to H_T^\bullet(\Sigma,i_x^*\pi_*\csh k\Sigma)$?
\end{problem}

It is answered in this paper by Corollary~\ref{corollary:5}. Note that unlike~\cite{Williamson_parity_sheaves}
this problem does not involve any stratifications. However, we can apply its solution to parity sheaves
by considering the stratification $G/B=\bigsqcup_{x\in W}BxB/B$ and dividing by the $T$-equivariant Euler classes
of the natural embeddings $\{x\}\hookrightarrow BxB/B$. The corresponding constriction is given in
Section~\ref{Application_to_parity_sheaves}.

Our algorithm is similar to the one described in~\cite{Shchigolev}. It uses the same construction of the transition matrix
as in~\cite[Theorem 4.10.3]{Shchigolev}, which was previously used by Fiebig for his upper bound for Lusztig's
conjecture~\cite{Fiebig} and which originally comes from the same H\"arterich's preprint~\cite{Haerterich}.
The advantage of our approach here compared with~\cite{Shchigolev} is that we do not divide by $2$
when we compute products of the basis elements of $H_T^\bullet(\pi^{-1}(x),k)$ (cf.~formula~(\ref{eq:40}) of this paper
and~\cite[Lemma 4.8.3]{Shchigolev}). This allows us to reestablish the 2-torsion for hexagonal permutations
in Braden's example~\cite[Appendix A]{BW}.

The paper is organized as follows. In Section~\ref{Bott-Samelson_varieties}, we introduce the main notions:
the Bott-Samelson variety, combinatorial galleries, load bearing walls, orders $\vartriangleleft$ and $<$,
tree analogs of combinatorial galleries, etc. In Section~\ref{Localization_theorems}, we explain how
to adjust Brion's proofs~\cite{Brion} of localization theorems to our situation of coefficients different from $\C$
and of noncompact spaces.
%Here we should note that in~\cite{Fiebig} virtually the same is done,
%under the additional requirement of finiteness of one-dimensional $T$-curves, which is not the case for Bott-Samelson varieties.
We use some ideas from~\cite{FW}, where the authors also prove localization theorems additionally assuming
finiteness of $T$-curves (which is not the case for Bott-Samelson varieties).
It is important to notice that we do need some form of GKM-restriction
(\ref{corollary:1:condition:3} in Corollary~\ref{corollary:1}) to prove the intersection formula for the image.
To ensure this condition, we first work with coefficients $\Z'=\Z$ or $\Z'=\Z[1/2]$ if the root system contains
a component of type $C_n$ and then change coefficients to a PID in Section~\ref{Change_of_coefficients}.

In Section~\ref{Bases_of_XXx}, we construct bases of $H^\bullet_T(\Sigma,\Z')$ and $H^\bullet_T(\Sigma_x,\Z')$. We use
here the criterions proved by H\"arterich~\cite[Theorems~6.2 and~6.3]{Haerterich}. The proofs of these results use only
the smooth case of $\SL_2(\C)$ or $\PSL_2(\C)$, which can be handled by~\cite{Atiyah_Bott}.
We develop here the main combinatorial tool of this paper --- operators of copy $\Delta$ and concentration $\nabla_t$,
which construct elements of $H^\bullet_T(\Sigma,\Z')$ from elements of $H^\bullet_T(\Sigma',\Z')$,
where $\Sigma'$ is the Bott-Samelson variety for the truncated sequence.
Finally, we construct bases of $H^\bullet_T(\Sigma_x,\Z')$ in the way similar to~\cite{Shchigolev}.
Our new product of bases elements~(\ref{eq:40}) does not include division, which is a definite advantage.

In Section~\ref{Basis_of_the_image_bar}, we construct a basis of $H^\bullet_T(\Sigma\setminus\pi^{-1}(x),\Z')$.
To achieve this goal, we need to prove the localization theorem for $\Sigma\setminus\pi^{-1}(x)$ and
to prove a criterion for $H^\bullet_T(\Sigma\setminus\pi^{-1}(x),\Z')$ (Proposition~\ref{proposition:3})
similar to H\"arterich's criterions~\cite[Theorems~6.2 and~6.3]{Haerterich}.

Section~\ref{costalk-to-stalk embedding} is devoted to applications of the obtained results.
We begin with the change\linebreak of coefficients in Section~\ref{Change_of_coefficients}, which
allows us to obtain bases of $H^\bullet_T(\Sigma,k)$, $H^\bullet_T(\Sigma_x,k)$ and
$H^\bullet_T(\Sigma\setminus\pi^{-1}(x),k)$ for any PID $k$ of characteristic not $2$ if the root system contains
a component of type $C_n$. Then we solve Problem~\ref{problem:1} and in Section~\ref{Application_to_parity_sheaves}
show how this information can be used to decompose the direct image $\pi_*\csh k\Sigma[r]$ to a direct sum of
indecomposable parity sheaves. As an example, we show in Section~\ref{Example_of_torsion} that this decomposition may
depend on the characteristic of $k$ (Theorem~\ref{theorem:6}) by considering a hexagonal permutation
as in Braden's example~\cite[Appendix A]{BW}.

Finally, we note that all the above results are valid in the affine setting with the corresponding restriction on the characteristic,
as we use only local techniques. The reader may consult, for example, \cite{Gaussent} about affine pavings.

\section{Bott-Samelson variety}\label{Bott-Samelson_varieties}

%\subsection{Combinatorial galleries}

%\subsection{Bott-Samelson varieties}
Let $G$ be a connected semisimple complex algebraic group, $T$ be its maximal torus
and $B$ be its Borel subgroup containing $T$. We denote by $W$, $\Phi$, $\Phi^+$, $\Pi$ the Weyl group, the set of all roots,
the set of positive roots and the set of simple roots respectively.

Let $\alpha$ be a root. We denote by $s_\alpha$ and $U_\alpha$ the simple reflection and
the unipotent subgroup corresponding to $\alpha$ respectively. Let $G_\alpha$ be the subgroup of $G$ generated
$U_\alpha$ and $U_{-\alpha}$. This subgroup is isomorphic either to $\SL_2(\C)$ or to $\PSL_2(\C)$.
We set $B_\alpha=G_\alpha\cap B$. Let $P_\alpha$ be the parabolic subgroup of $G$ corresponding to $\alpha$.
If $\alpha$ is simple, then $P_\alpha=B\cup Bs_\alpha B$.
We denote by $x_\alpha:\C\to U_\alpha$ the canonical homomorphism.

Throughout the paper, we fix a sequence $s=(s_1,s_2,\ldots,s_r)$ of simple reflections, where $s_i=s_{\alpha_i}$
for some $\alpha_i\in\Pi$, and consider the {\it Bott-Samelson} variety
$$
\Sigma=P_{\alpha_1}\times P_{\alpha_2}\times\cdots\times P_{\alpha_r}/B^r,
$$
where $B^r$ acts as follows:
$$
(p_1,p_2,\ldots,p_r)\cdot(b_1,b_2,\ldots,b_r)=(p_1b_1,b_1^{-1}p_2b_2,\ldots,b_{r-1}^{-1}p_rb_r).
$$
We denote by $[p_1,\ldots,p_r]$ the point of $\Sigma$ corresponding to $(p_1,\ldots,p_r)$.
It is well known that $\Sigma$ is a smooth complex variety of dimension $r$.
Moreover, $T$ acts on $\Sigma$ by $t\cdot[p_1,p_2,\ldots,p_r]=[tp_1,p_2,\ldots,p_r]$ and $\Sigma$ is covered
by open $T$-equivariant subsets
$$
U^\gamma=\{[x_{\gamma_1(-\alpha_1)}(c_1)\gamma_1,x_{\gamma_2(-\alpha_2)}(c_2)\gamma_2,\ldots,x_{\gamma_r(-\alpha_r)}(c_r)\gamma_r]\suchthat c_1,c_2,\ldots,c_r\in\C\},
$$
where $\gamma$ runs through $\Gamma$.

Let $\pi:\Sigma\to G/B$ be the map $\pi([p_1,\ldots,p_r])=p_1\cdots p_rB/B$.
For any $x\in G/B$, we fix the following notation $\Sigma_x=\pi({x})$ and $\OSigma_x=\Sigma\setminus\Sigma_x$.
We can also view $\Sigma$ as a closed subvariety of $(G/B)^r$ via the embedding $\iota:\Sigma\hookrightarrow(G/B)^r$
defined by
$$
\iota([p_1,\ldots,p_r])=(p_1B,p_1p_2B,\ldots,p_1p_2\cdots p_rB).
$$
This map is an isomorphism for $G=\SL_2(\C)$ and $G=\PSL_2(\C)$.

Each point of $G/B$ fixed by $T$ can be written uniquely as $wB$ for some $w\in W$. So, abusing notation,
we will denote this point simply by $w$. Consider the following set:
$$
\Gamma=\{(\gamma_1,\ldots,\gamma_r)\suchthat\gamma_i=s_i\text{ or }\gamma_i=e\}.
$$
The elements of this set are called {\it combinatorial galleries}. Note that $\Gamma$ can be thought of as the set of all
$T$-fixed points of $\Sigma$ if we identify $(\gamma_1,\ldots,\gamma_r)$ with $[\gamma_1,\ldots,\gamma_r]$.
For each $\gamma=(\gamma_1,\ldots,\gamma_r)\in\Gamma$ and $i=0,\ldots,r$, we denote $\gamma^i=\gamma_1\cdots\gamma_i$.
So we get $\gamma^0=e$. If additionally $i>0$, then we denote $\bbeta_i(\gamma)=\gamma^i(-\alpha_i)$ and
$\widetilde\bbeta_i(\gamma)=\gamma^{i-1}(-\alpha_i)$. If $\bbeta_i(\gamma)>0$, then we say that $i$ is {\it load bearing }
for $\gamma$ or that the wall corresponding to $\bbeta_i(\gamma)$ is {\it load bearing}. For any $A\subset W$, we denote
$$
\Gamma_A=\{\gamma\in\Gamma\suchthat\pi(\gamma)\in A\},\quad \OGamma_A=\Gamma\setminus\Gamma_A.
$$
If $A=\{x\}$, then we use the simplified notation $\Gamma_x=\Gamma_{\{x\}}$ and $\OGamma_x=\OGamma_{\{x\}}$.

For $\alpha\in\Phi^+$ and $\gamma\in\Gamma$, we set
$$
J(\gamma)=\{i\suchthat\bbeta_i(\gamma)>0\},\; M_\alpha(\gamma)=\{i\suchthat\bbeta_i(\gamma)=\pm\alpha\},\;
J_\alpha(\gamma)=\{i\suchthat\bbeta_i(\gamma)=\alpha\}=J(\gamma)\cap M_\alpha(\gamma),$$
$$
D(\gamma)=\{i\suchthat\widetilde\bbeta_i(\gamma)>0\},\quad
D_\alpha(\gamma)=\{i\suchthat\widetilde\bbeta_i(\gamma)=\alpha\}=D(\gamma)\cap M_\alpha(\gamma).
$$
Note that $\bbeta_i(\gamma)>0\Leftrightarrow\gamma^is_i<\gamma^i$ and
$\widetilde\bbeta_i(\gamma)>0\Leftrightarrow\gamma^{i-1}s_i<\gamma^{i-1}$.
Using these subsets, we introduce the following equivalence relation on $\Gamma$:
$$
\gamma\sim_\alpha\delta\Longleftrightarrow \gamma_i=\delta_i\text{ unless }\bbeta_i(\gamma)=\pm\alpha.
$$
One can easily check that $M_\alpha(\gamma)$ depends only on the $\sim_\alpha$-equivalence class of $\gamma$.

We will use the following two relations on $\Gamma$:
$$
\delta\vartriangleleft\gamma\Longleftrightarrow\delta^0=\gamma^0,\ldots,\delta^{i-1}=\gamma^{i-1},\delta^i<\gamma^i\text{ for some }i=0,\ldots,r;
$$
$$
\delta<\gamma\Longleftrightarrow\delta^i<\gamma^i,\delta^{i+1}=\gamma^{i+1},\ldots,\delta^r=\gamma^r\text{ for some }i=0,\ldots,r.
$$
Clearly, $\vartriangleleft$ is a total order on $\Gamma$, whereas $<$ in general becomes a total order only
when restricted to some $\Gamma_x$.
As usual, we set $\delta\trianglelefteq\gamma\Leftrightarrow\delta\vartriangleleft\gamma$ or $\delta=\gamma$ and
$\delta\le\gamma\Leftrightarrow\delta<\gamma$ or $\delta=\gamma$.
Note that $\delta\sim_\alpha\gamma$ and $J_\alpha(\delta)\subset J_\alpha(\gamma)$ imply $\delta\trianglelefteq\gamma$
We recall the following Lemma from~\cite{Haerterich}.

\begin{proposition}\label{lemma:4}
Let $M_\alpha(\gamma)=\{i_1<\cdots<i_\ll\}$. Then for $1\le j<\ll$ we have
$$
i_j\in J_\alpha(\gamma)\Longleftrightarrow i_{j+1}\in D_\alpha(\gamma).
$$
and $i_\ll\in J_\alpha(\gamma)$ if and only if $s_\alpha\pi(\gamma)<\pi(\gamma)$.
In particular, if $\gamma\sim_\alpha\delta$ and $\pi(\gamma)=\pi(\delta)$,
then $J_\alpha(\delta)\subset J_\alpha(\gamma)\Leftrightarrow D_\alpha(\delta)\subset D_\alpha(\gamma)$.
\end{proposition}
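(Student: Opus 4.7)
The plan is to follow the sequence of roots $v_k := (\gamma^k)^{-1}(\alpha)$ for $k = 0, 1, \ldots, r$, which records where the combinatorial gallery stands with respect to the hyperplane $H_\alpha$. Unwinding the definitions of $\bbeta_k$ and $\widetilde\bbeta_k$ gives the reformulations $k \in M_\alpha(\gamma) \Longleftrightarrow v_k = \pm\alpha_k \Longleftrightarrow v_{k-1} = \pm\alpha_k$; more sharply, $k \in J_\alpha(\gamma) \Longleftrightarrow v_k = -\alpha_k$ and $k \in D_\alpha(\gamma) \Longleftrightarrow v_{k-1} = -\alpha_k$. The transition rule is $v_{k+1} = \gamma_{k+1}^{-1}(v_k)$, i.e.\ either the identity (if $\gamma_{k+1}=e$) or the simple reflection $s_{k+1}$ (if $\gamma_{k+1}=s_{k+1}$).

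The main ingredient is the classical fact that a simple reflection $s_i$ permutes the positive roots other than $\alpha_i$; equivalently, $\sign(s_i(v)) = \sign(v)$ whenever $v \ne \pm\alpha_i$. For an index $k$ with $i_j \le k \le i_{j+1}-2$, the index $k+1$ lies strictly between $i_j$ and $i_{j+1}$, hence $k+1 \notin M_\alpha(\gamma)$, hence $v_k \ne \pm\alpha_{k+1}$, hence $\sign(v_{k+1}) = \sign(v_k)$. Iterating gives $\sign(v_{i_j}) = \sign(v_{i_{j+1}-1})$. Since $v_{i_j} \in \{\pm\alpha_{i_j}\}$ and $v_{i_{j+1}-1} \in \{\pm\alpha_{i_{j+1}}\}$, this equality of signs reads exactly as $i_j \in J_\alpha(\gamma) \Longleftrightarrow i_{j+1} \in D_\alpha(\gamma)$.

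The same sign-propagation argument, applied to the range $[i_\ll, r]$, yields $\sign(v_{i_\ll}) = \sign(v_r)$; combined with the standard Bruhat criterion $s_\alpha\pi(\gamma) < \pi(\gamma) \Longleftrightarrow \pi(\gamma)^{-1}(\alpha) < 0$, this gives the second assertion. For the final claim, $\gamma \sim_\alpha \delta$ forces $M_\alpha(\gamma) = M_\alpha(\delta)$; sign propagation on $[0, i_1-1]$ starting from $v_0 = \alpha > 0$ shows $i_1 \notin D_\alpha$ regardless of $\gamma$; and $\pi(\gamma) = \pi(\delta)$ fixes whether $i_\ll \in J_\alpha$. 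The shift-by-one bijection between $J_\alpha \cap \{i_1, \ldots, i_{\ll-1}\}$ and $D_\alpha \cap \{i_2, \ldots, i_\ll\}$ provided by the first assertion then reduces the containment $J_\alpha(\delta) \subset J_\alpha(\gamma) \Longleftrightarrow D_\alpha(\delta) \subset D_\alpha(\gamma)$ to a position-by-position comparison. The only point requiring real care is the bookkeeping of signs coming from the two competing conventions $\bbeta_k$ and $\widetilde\bbeta_k$.
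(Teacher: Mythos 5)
The paper does not prove this proposition at all — it is explicitly imported as a lemma from H\"arterich's preprint and used as a black box. Your argument is a correct, self-contained proof. The reduction to the root walk $v_k = (\gamma^k)^{-1}(\alpha)$ is exactly the right object: a short computation gives $k\in M_\alpha(\gamma)\Leftrightarrow v_k=\pm\alpha_k\Leftrightarrow v_{k-1}=\pm\alpha_k$, with the sign refinements $k\in J_\alpha(\gamma)\Leftrightarrow v_k=-\alpha_k$ and $k\in D_\alpha(\gamma)\Leftrightarrow v_{k-1}=-\alpha_k$ (using $\bbeta_k(\gamma)=\gamma^k(-\alpha_k)$ and $\widetilde\bbeta_k(\gamma)=\gamma^{k-1}(-\alpha_k)$). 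The transition $v_{k+1}=\gamma_{k+1}^{-1}(v_k)$ and the fact that $s_{k+1}$ preserves signs away from $\pm\alpha_{k+1}$ then propagate the sign across each gap $(i_j,i_{j+1})$, giving the zigzag equivalence $i_j\in J_\alpha(\gamma)\Leftrightarrow i_{j+1}\in D_\alpha(\gamma)$; the same propagation on $[i_\ell,r]$ combined with $v_r=\pi(\gamma)^{-1}(\alpha)$ and the Bruhat criterion $s_\alpha w<w\Leftrightarrow w^{-1}(\alpha)<0$ gives the boundary statement, and on $[0,i_1-1]$ combined with $v_0=\alpha>0$ it gives $i_1\notin D_\alpha(\gamma)$. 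The last claim follows formally: once $M_\alpha(\gamma)=M_\alpha(\delta)$ is known (the paper states this depends only on the $\sim_\alpha$-class) and the membership of $i_\ell$ in $J_\alpha$ is pinned down by $\pi(\gamma)=\pi(\delta)$, the shift $i_j\mapsto i_{j+1}$ converts one inclusion into the other. The argument is complete; the only implicit dependence is on the paper's prior remark that $M_\alpha$ is $\sim_\alpha$-invariant, which you do use when passing between $\gamma$ and $\delta$.
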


%Lemma 1.1. (a) Let i1 < i2 < . . . < i§¤ be the elements of MҐб(Ґг). Then for 1 ЎВ j < §¤ we have
%ij Ўф JҐб(Ґг) .ўЎ ij+1 Ўф DҐб(Ґг)
%and i§¤ Ўф JҐб(Ґг) if and only if Ґр(Ґг) > sҐбҐр(Ґг).
%In particular, if Ґг Ў­Ґб Ґд and Ґр(Ґг) = Ґр(Ґд), then JҐб(Ґг) Ўц JҐб(Ґд) .ўЎ DҐб(Ґг) Ўц DҐб(Ґд).

%\subsection{Set theoretical constructions}
%
%
We use the symbol $\cdot$ to denote the addition of a new entry to a sequence: $(a_1,\ldots,a_n)\cdot b=(a_1,\ldots,a_n,b)$.
Conversely, for a nonempty sequence $a=(a_1,\ldots,a_n)$, we denote $a'=(a_1,\ldots,a_{n-1})$ its truncation.

For any integer $r\ge0$, let $\Tr_r$ denote the binary tree that consisting of all sequences (including the empty one)
with entries $0$ or $1$ of length less than $r$. We obviously have $|\Tr_r|=2^r-1$ and $\Tr_r=\Tr_{r-1}\cdot0\sqcup\Tr_{r-1}\cdot1\sqcup\{\emptyset\}$
for $r>0$. To construct bases of the $T$-equivariant cohomology of the Bott-Samelson varieties, we consider the following set:
$$
\Upsilon=\{\rho:\Tr_r\to\{s_1,\ldots,s_r\}\suchthat\rho_u=e\text{ or }\rho_u=s_{r-|u|}\}.
$$
Elements of this set are thus tree analogs of combinatorial galleries. For example, for $r=3$, we draw $\Upsilon$ as follows:
$$
\begin{tikzcd}[row sep=small]
{}&&&\arrow{lld}\rho_\emptyset\arrow{rrd}&&&&\{e,s_3\}\\
{}&\arrow{ld}\rho_{(0)}\arrow{rd}&&&&\arrow{ld}\rho_{(1)}\arrow{rd}&&\{e,s_2\}\\
\rho_{(0,0)}&&\rho_{(1,0)}&&\rho_{(0,1)}&&\rho_{(1,1)}&\{e,s_1\}
\end{tikzcd}
$$
where the right column shows the sets to which the elements of the corresponding rows belong.

Our notation above implicitly referred to the sequence $s=(s_1,\ldots,s_r)$ and the group $G$.
If, for the sake of induction, we want to consider the same objects for the shorter sequence $s'=(s_1,\ldots,s_{r-1})$,
we add ${}'$ to our symbols: $\Sigma'$, $\Gamma'$, $\OGamma'_x$, etc. For example, let $r>0$ and $\rho\in\Upsilon$. For any
$\epsilon\in\{0,1\}$, we consider its {\it$\epsilon$-truncation} $\rho'_\epsilon\in\Upsilon'$ defined by
$(\rho'_\epsilon)_{u'}=\rho_{u'\cdot\epsilon}$
(at the picture above, $\rho'_0$ and $\rho'_1$ are the left and right subtrees respectively).

In Section~\ref{Basis_of_the_image_bar}, we consider the cases $G=\SL_2(\C)$ and $G=\PSL_2(\C)$. The sequence $s$
is characterized then only by its length $r$. We denote by $\Sigma^2_r$ and $\Gamma^2_r$ the Bott-Samelson variety
and the set of combinatorial galleries respectively.

Finally, note that we also consider the Bott-Samelson variety corresponding to the empty sequence ($r=0$).
This is the one-point variety $\Sigma=\Gamma=\{\emptyset\}$.

\section{Localization theorems}\label{Localization_theorems}

\subsection{Set theoretical notation} We denote the fact that $N$ is a subset of $M$, including the case $N=M$, by $N\subset M$,
reserving the notation $N\varsubsetneq M$ for the proper inclusion. We write $i_{M,N}:N\hookrightarrow M$
for the natural inclusion map. We sometimes write $r_{N,M}^\bullet$ for the map $H_G^\bullet(M,k)\to H_G^\bullet(N,k)$
induced by a $G$-equivariant embedding $i_{M,N}:N\hookrightarrow M$.
 We denote by $|X|$ the cardinality of a finite set $X$ and by
$\Map(X,Y)$ the set of all maps from $X$ to $Y$.
For a set $S$ with an equivalence relation $\sim$, we denote by $\rep(S,\sim)$ any set
of representatives of $\sim$-equivalence classes. Throughout the paper, ``graded'' means ``$\Z$-graded''.

\subsection{Isomorphism of localizations of modules} We want to formulate here a simple lemma from commutative algebra
whose prove is left to the reader.

Let $S$ be a (unitary) commutative ring, $M$ and $N$ be $S$-modules and $q\in S$. Consider the ring of quotients
$S'=S[q^{-1}]$ and $S'$-modules of quotients $M'=M[q^{-1}]$ and $N'=N[q^{-1}]$.
Any homomorphism of $S$-modules $f:M\to N$ gives rise to the homomorphism $f':M'\to N'$ of $S'$-modules acting as follows:
$f'(m/q^k)=f(m)/q^k$.

\begin{lemma}\label{lemma:1}
Suppose that for some integers $a,b\ge0$ the following conditions hold:
\begin{enumerate}
\item\label{lemma:1:part:1} $q^aN\subset\im f$;
\item\label{lemma:1:part:2} $q^b\ker f=0$.
\end{enumerate}
Then $f':M'\to N'$ is an isomorphism of $S'$-modules.
\end{lemma}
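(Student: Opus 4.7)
The plan is to verify surjectivity and injectivity of $f'$ separately, using the hypotheses (\ref{lemma:1:part:1}) and (\ref{lemma:1:part:2}) in turn. Both arguments are standard localization manipulations; neither should present a real obstacle, since the whole point of inverting $q$ is exactly to kill the discrepancy between $\im f$ and $N$ and to kill the $q$-torsion in $\ker f$.

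For surjectivity, I would start with an arbitrary element $n/q^k \in N'$. Hypothesis (\ref{lemma:1:part:1}) gives $q^a n = f(m)$ for some $m \in M$. Then
$$
f'\!\left(\frac{m}{q^{k+a}}\right) \;=\; \frac{f(m)}{q^{k+a}} \;=\; \frac{q^a n}{q^{k+a}} \;=\; \frac{n}{q^k},
$$
so $n/q^k \in \im f'$.

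For injectivity, suppose $f'(m/q^k) = 0$, i.e.\ $f(m)/q^k = 0$ in $N'$. By the definition of the localized module $N' = N[q^{-1}]$, this means $q^j f(m) = 0$ in $N$ for some $j \ge 0$, equivalently $f(q^j m) = 0$, so $q^j m \in \ker f$. Hypothesis (\ref{lemma:1:part:2}) then gives $q^{b+j} m = q^b(q^j m) = 0$ in $M$, and hence $m/q^k = 0$ in $M'$.

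The only minor subtlety to keep in mind is that one is using the explicit description of the localization of a module (an element is zero iff it is annihilated by some power of $q$, and elements with different denominators are identified only up to clearing by such a power); apart from bookkeeping of the exponents $a$, $b$, $j$, $k$, there is nothing substantive to verify, which is why the statement is left to the reader.
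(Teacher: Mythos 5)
Your proof is correct and is exactly the standard localization argument that the paper has in mind; indeed the paper explicitly leaves the proof of Lemma~\ref{lemma:1} to the reader, so there is no alternate route to compare against. Both the surjectivity step (clear denominators using $q^aN\subset\im f$) and the injectivity step (pull $q^jf(m)=0$ back to $f(q^jm)=0$, then annihilate with $q^b$) are handled correctly, including the point that vanishing in $N'=N[q^{-1}]$ means being killed by some power of $q$.
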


\subsection{The equivariant Mayer-Vietoris sequence for open subsets}\label{Mayer-Vietoris} It this paper, we consider the equivariant derived
category $D^b_T(X,k)$ for a commutative ring $k$ and a topological group $T$. For any object $\F$ of this category,
one can define the $T$-equivariant cohomology $H_T^\bullet(X,\F)$.
The basic definitions and properties of this category and $T$-equivariant cohomologies can be found in~\cite{BL}.
We shall also use the functors $f_*$, $f^*$, $f_!$, $f^!$ between equivariant derived categories defined in this book.
Remember the following  well-known result.

\begin{proposition}[Mayer-Vietoris sequence]\label{proposition:6} Let $X$ be a $T$-space.
For any open $T$-stable subsets $U$, $V$ and an object $\F\in D_T^b(X,k)$, we have the following exact sequence:
\begin{multline*}
\cdots\to H_T^{i-1}(X,\F|_{U\cap V})\to H_T^i(X,\F|_{U\cup V})\to H_T^i(U,\F|_U)\oplus H_T^i(V,\F|_V)\to\\
\to H_T^i(X,\F|_{U\cap V})\to H_T^{i+1}(X,\F|_{U\cup V})\to\cdots
\end{multline*}
\end{proposition}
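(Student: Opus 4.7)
The plan is to produce a single distinguished triangle in $D^b_T(U\cup V,k)$ whose associated long exact cohomology sequence is exactly the Mayer--Vietoris sequence claimed.

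First, I would observe that every term in the desired sequence depends only on the restriction $\F|_{U\cup V}$, so I may replace $X$ by the open $T$-stable subspace $U\cup V$ and $\F$ by $\F|_{U\cup V}$; after this reduction I assume $X=U\cup V$. Write $j_U:U\hookrightarrow X$, $j_V:V\hookrightarrow X$ and $j_W:W\hookrightarrow X$ for the equivariant open inclusions, where $W=U\cap V$.

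Second, for the given $\F\in D^b_T(X,k)$ I would construct the Mayer--Vietoris triangle
$$\F\longrightarrow Rj_{U,*}j_U^*\F\oplus Rj_{V,*}j_V^*\F\longrightarrow Rj_{W,*}j_W^*\F\xrightarrow{+1}.$$
The first arrow is the diagonal of the two adjunction units $\F\to Rj_{U,*}j_U^*\F$ and $\F\to Rj_{V,*}j_V^*\F$; the second is the difference of the two adjunction units obtained from the factorizations $j_W=j_U\circ j_{U,W}=j_V\circ j_{V,W}$. In the ordinary (non-equivariant) derived category, distinguishedness can be checked stalkwise: over a point $x\in U\setminus V$ only the $U$-summand contributes and its adjunction unit becomes the identity, and the cases $x\in V\setminus U$ and $x\in W$ are symmetric. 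The equivariant version is then transported along the simplicial construction of $D^b_T(X,k)$, using that $Rj_*$ and $j^*$ for open equivariant inclusions commute with the forgetful functors on the simplicial resolution of $X$, cf.~\cite{BL}.

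Third, I would apply $R\Gamma_T(X,-)$ to this triangle. Since for any open equivariant inclusion $j:Y\hookrightarrow X$ one has $R\Gamma_T(X,Rj_*\F|_Y)=R\Gamma_T(Y,\F|_Y)$, the three vertices of the triangle produce $R\Gamma_T(X,\F)$, $R\Gamma_T(U,\F|_U)\oplus R\Gamma_T(V,\F|_V)$, and $R\Gamma_T(W,\F|_W)$ respectively. The long exact cohomology sequence of the resulting distinguished triangle in $D^b(k)$ is then exactly the sequence asserted.

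The main obstacle is purely formal: one must justify the existence and distinguishedness of the above triangle at the level of the equivariant derived category. This is standard but requires unwinding the simplicial definition of $D^b_T$ from~\cite{BL} and checking that the unit and counit morphisms used in the construction descend to the equivariant setting. All remaining steps are immediate from the functoriality of $R\Gamma_T$ and the long exact sequence of cohomology of a distinguished triangle.
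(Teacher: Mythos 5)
The paper states this proposition as a ``well-known result'' and gives no proof, so there is no argument in the paper to compare yours against. Your sketch via the Mayer--Vietoris distinguished triangle is the standard one, and it is essentially correct. One small imprecision worth fixing: distinguishedness is not itself a stalk-local property. The cleaner way to phrase your second step is to form the cone $C$ of the diagonal $\F\to Rj_{U,*}j_U^*\F\oplus Rj_{V,*}j_V^*\F$, observe that the difference map to $Rj_{W,*}j_W^*\F$ kills the diagonal (both composites $\F\to Rj_{U,*}j_U^*\F\to Rj_{W,*}j_W^*\F$ and $\F\to Rj_{V,*}j_V^*\F\to Rj_{W,*}j_W^*\F$ are the adjunction unit for $j_W$), hence factors through a map $C\to Rj_{W,*}j_W^*\F$, and then check that this \emph{map} is an isomorphism --- which is stalk-local and is verified exactly by your three-case analysis $x\in U\setminus V$, $x\in V\setminus U$, $x\in W$. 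With that rephrasing the non-equivariant construction is complete, and your transport to $D^b_T$ via the simplicial model of~\cite{BL}, together with $R\Gamma_T(X,Rj_*(\F|_Y))\simeq R\Gamma_T(Y,\F|_Y)$ for open equivariant $j:Y\hookrightarrow X$, yields precisely the asserted long exact sequence.
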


In the proofs of the localization theorems, this proposition is applied as follows. Suppose that $X=U\cup V$, where
$X$ is a $T$-space and $U$, $V$ are its open $T$-stable subspaces. \linebreak Suppose additionally
that there exist elements $u\in H_T^n(\pt,k)$ and $v\in H_T^m(\pt,k)$ such that $u$ annihilates $H_T^\bullet(O_U,\F_U)$ and
$v$ annihilates $H_T^\bullet(O_V,\F_V)$ for any open $T$-stable subsets $O_U\subset U$, $O_V\subset V$ and
any objects $\F_U\in D_T^b(O_U,k)$, $\F_V\in D_T^b(O_V,k)$. Then Proposition~\ref{proposition:6}
implies that $u^2v$ and $uv^2$ annihilate $H_T^\bullet(O,\F)$ for any open $O\subset X$ and object $\F\in D_T^b(O,k)$.

Another trivial corollary of the Mayer-Vietoris sequence is as follows.

\begin{corollary}\label{corollary:2}
Let $X$ be a $T$-space, $X=\bigsqcup_{i\in I} X_i$, each $X_i$ be open and $T$-stable and $I$ be finite.
Suppose that $Y\subset X$ is another $T$-subspace. We denote $Y_i=Y\cap X_i$.
An element $f\in H^\bullet(Y,k)$ belongs to the image of the restriction $H^\bullet_T(X,k)\to H^\bullet_T(Y,k)$
if and only if each $f|_{Y_i}$ belongs to image of the restriction $H^\bullet_T(X_i,k)\to H^\bullet_T(Y_i,k)$.
\end{corollary}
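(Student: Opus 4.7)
The plan is to extract the Corollary directly from Proposition~\ref{proposition:6} by showing that, for a finite disjoint union of open $T$-stable subsets, the $T$-equivariant cohomology splits as a direct sum compatibly with the restriction $r_{Y,X}^\bullet$. Once this splitting is in place, the statement becomes a completely formal assertion about direct sums of module maps.

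First, I would handle the two-piece case. Write $X = X_1 \sqcup X_2$ with both pieces open and $T$-stable, set $U = X_1$, $V = X_2$, and apply Proposition~\ref{proposition:6} to an arbitrary $\F \in D_T^b(X,k)$. Since $U \cap V = \varnothing$ and the equivariant cohomology of the empty space vanishes, the Mayer-Vietoris sequence collapses to the isomorphism
\[
H_T^i(X, \F) \;\cong\; H_T^i(X_1, \F|_{X_1}) \,\oplus\, H_T^i(X_2, \F|_{X_2}).
\]
An obvious induction on $|I|$ (taking $U = X_{i_0}$ and $V$ equal to the union of the remaining opens, which is again open and $T$-stable) then gives
\[
H_T^\bullet(X, k) \;\cong\; \bigoplus_{i \in I} H_T^\bullet(X_i, k).
\]
Applying the same reasoning to $Y = \bigsqcup_{i \in I} Y_i$ — note that each $Y_i = Y \cap X_i$ is open and $T$-stable in $Y$ because $X_i$ is open in $X$ — produces
\[
H_T^\bullet(Y, k) \;\cong\; \bigoplus_{i \in I} H_T^\bullet(Y_i, k).
\]

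Next I would observe that these splittings are natural in the space: the restriction map $r_{Y,X}^\bullet$ is induced by the inclusions $Y_i \hookrightarrow X_i$ and therefore becomes $\bigoplus_{i \in I} r_{Y_i, X_i}^\bullet$ under the identifications above. Consequently $\im(r_{Y,X}^\bullet) = \bigoplus_{i \in I} \im(r_{Y_i, X_i}^\bullet)$, and the equivalence in the Corollary follows immediately: an element $f$, written as the tuple $(f|_{Y_i})_{i \in I}$, lies in the image if and only if every component does, in which case the tuple of preimages $(g_i)_{i \in I} \in \bigoplus_i H_T^\bullet(X_i,k)$ assembles into the desired $g \in H_T^\bullet(X,k)$ restricting to $f$.

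There is no real obstacle: the ``only if'' direction is pure functoriality, and the ``if'' direction is the gluing of the preimages $g_i$, which is handed to us for free by the direct sum decomposition. The only point worth being careful about is that the induction on $|I|$ genuinely requires $I$ to be finite (which is assumed), since the Mayer-Vietoris-derived splitting is a direct sum rather than a direct product in general.
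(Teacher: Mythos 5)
Your proof is correct and follows essentially the same route as the paper's: reduce to $|I|=2$ by induction, apply the Mayer-Vietoris sequence of Proposition~\ref{proposition:6} to the disjoint open cover to obtain the direct-sum splitting of $H_T^\bullet$, and use the compatibility of that splitting with the restriction $r_{Y,X}^\bullet$ to conclude. The paper phrases this via a commutative diagram with exact rows rather than explicitly writing out the decomposition of the images, but the argument is the same.
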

\begin{proof}
Induction together with the finiteness of $I$ reduces the problem to the case $I=\{1,2\}$.
As $X_1\cap X_2=\emptyset$ and the Mayer-Vietoris sequence is compatible with restrictions, we get the following commutative
diagram with exact rows:
$$
\begin{tikzcd}
0\arrow{r}&H_T^\bullet(X,k)\arrow{d}\arrow{r}&H_T^\bullet(X_1,k)\oplus H_T^\bullet(X_2,k)\arrow{d}\arrow{r}&0\\
0\arrow{r}&H_T^\bullet(Y,k)\arrow{r}&H_T^\bullet(Y_1,k)\oplus H_T^\bullet(Y_2,k)\arrow{r}&0
\end{tikzcd}
$$
Hence the required result follows.
\end{proof}

The above arguments apply to any topological group $T$ not necessarily a torus. In this paper, we are however interested
only in the case of a torus and use the following notation:
$$
S_k=H_T^\bullet(\pt,k)\simeq S(X(T)\otimes_\Z k),
$$
where $X(T)$ is the character group of $T$ and $S$ in the right-hand side means taking the symmetric algebra.
This is a graded algebra such that $S_k^2=X(T)\otimes_\Z k$.
Finally, note that in the next section, we need to consider the compact subtorus $K=(S^1)^n$ of $T\simeq\C^n$.
We can replace $T$-equivariant cohomology with $K$-equivariant cohomology if necessary.

\subsection{Localization} We prove here some localization theorems, closely following~\cite{Brion}
(see also~\cite{FW} for the case of coefficients different from $\C$).

\begin{theorem}\label{theorem:1}
Let $\Gamma<T$ be a closed subgroup of $T$ and $X$ be a paracompact $T$-space that has an open covering
$X=\bigcup_{i\in I} Y^{(i)}$ such that for any $i\in I$
\begin{itemize}
\item $Y^{(i)}$ is open and $T$-equivariant;
\item there exists a $T$-equivariant embedding of $Y^{(i)}$ in a finite dimensional rational representation $V^{(i)}$ of $T$.
\end{itemize}
Denote $\Lm_\Gamma$ the set of all weights of $T$ occurring as weights of some $V^{(i)}$ and having nontrivial
restriction to $\Gamma$. %We assume that $\lm\otimes{\mathbf 1}_k\ne0$ in $X(T)\otimes_\Z k$ for any $\lm\in\Lm_\Gamma$.

Then the natural restriction morphism $H_T^\bullet(X,k)\to H_T^\bullet(X^\Gamma,k)$ becomes an isomorphism after inverting all
elements of $\Lm_\Gamma\otimes_\Z k$.
\end{theorem}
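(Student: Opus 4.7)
The plan is to follow the classical localization argument of Brion~\cite{Brion}, adapted to a general commutative coefficient ring $k$ and to open, possibly non-compact, $T$-spaces by means of Proposition~\ref{proposition:6} and Lemma~\ref{lemma:1}. The argument splits into a local step (producing an explicit annihilator of the kernel and cokernel of the restriction on a single chart), a Mayer--Vietoris gluing step, and a final inversion.

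For the local step, fix $i\in I$ and work inside $V=V^{(i)}$. Choose a $T$-stable decomposition $V=V^\Gamma\oplus V'$; every weight of $V'$ restricts non-trivially to $\Gamma$, so the equivariant Euler class $u_i=e(V')\in S_k$ is a product of elements of $\Lm_\Gamma\otimes_\Z k$. For any open $T$-stable $O\subset Y^{(i)}$, the subset $O^\Gamma=O\cap V^\Gamma$ is closed in $O$ with trivial equivariant normal bundle of fibre $V'$, so the $T$-equivariant Thom isomorphism produces a Gysin sequence
$$
\cdots\to H_T^{\bullet-2d_i}(O^\Gamma,k)\xrightarrow{\iota_!}H_T^\bullet(O,k)\xrightarrow{r}H_T^\bullet(O\setminus O^\Gamma,k)\to\cdots,
$$
in which $\iota^*\circ\iota_!$ is multiplication by $u_i$ (here $2d_i=\dim_{\R}V'$). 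A projection-formula diagram chase then shows that if $u_i^N$ annihilates $H_T^\bullet(O\setminus O^\Gamma,k)$, then $u_i^{N+1}$ annihilates both the kernel and cokernel of the restriction $\iota^*=r_{O^\Gamma,O}^\bullet$. To obtain such an $N$, I would cover $O$ by $T$-stable opens of product form $U\times W\subset V^\Gamma\times V'$ and reduce, via Proposition~\ref{proposition:6}, to a direct Gysin computation at the origin of $V'$, which yields $u_i^2\cdot H_T^\bullet(W\setminus\{0\},k)=0$.

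For the global step, the discussion following Proposition~\ref{proposition:6} shows that annihilation by $u_i^{N_i}$ on $Y^{(i)}$ and by $u_j^{N_j}$ on $Y^{(j)}$ propagates via Mayer--Vietoris to annihilation on $Y^{(i)}\cup Y^{(j)}$ by suitable products $u_i^{a}u_j^{b}$. Iterating this over a locally finite refinement of $\{Y^{(i)}\}$, which exists by paracompactness of $X$, produces for every kernel/cokernel class an annihilator in the multiplicative set generated by $\Lm_\Gamma\otimes_\Z k$; Lemma~\ref{lemma:1} (in its obvious extension to this multiplicative set) then converts the annihilation into the required isomorphism after inverting $\Lm_\Gamma\otimes_\Z k$. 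The principal technical obstacle is the local step on an arbitrary open $O\subset V$: the Gysin computation is clean only on $V$ itself, so the further Mayer--Vietoris reduction to product opens is needed, and one must keep track of the accumulating powers of $u_i$; fortunately these powers are always products of elements of $\Lm_\Gamma$, so they are absorbed harmlessly by the final inversion.
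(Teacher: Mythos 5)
Your overall skeleton (local annihilator from each chart, Mayer--Vietoris gluing, Lemma~\ref{lemma:1} to convert annihilation into an isomorphism of localizations) mirrors the paper's, but the way you produce the local annihilator and compare $H_T^\bullet(X,k)$ to $H_T^\bullet(X^\Gamma,k)$ is genuinely different, and this is where the gap lies. You invoke an equivariant Thom/Gysin sequence for the inclusion $O^\Gamma\hookrightarrow O$ together with the self-intersection formula $\iota^*\iota_!=u_i\cup{-}$. That machinery requires $O^\Gamma$ to sit inside $O$ as a closed submanifold with an identified tubular neighbourhood of fibre $V'$. But the hypotheses only give a $T$-equivariant \emph{embedding} $Y^{(i)}\hookrightarrow V^{(i)}$, not an \emph{open} embedding, and $X$ is merely a paracompact $T$-space; nothing forces $Y^{(i)}$ to be a manifold, let alone to have a $T$-equivariant tubular neighbourhood of $(Y^{(i)})^\Gamma$. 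So the exact triangle you write and the formula $\iota^*\iota_!=u_i$ are not available in the stated generality.

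The paper sidesteps this entirely. First, it never compares $H_T^\bullet(X,k)$ with $H_T^\bullet(X^\Gamma,k)$ directly: it introduces the direct limit $L^\bullet(k)=\varinjlim_{U\supset X^\Gamma}H_K^\bullet(U,k)$ over $K$-invariant open neighbourhoods, compares $H_K^\bullet(X,k)$ with $L^\bullet(k)$ via the exact triangle of the closed complement $X\setminus U\hookrightarrow X$ (which needs no normal-bundle data at all), and only at the very end identifies $L^\bullet(k)\cong H_K^\bullet(X^\Gamma,k)$ by Quillen's tautness result. Second, the local annihilator is obtained not from an Euler class but from a Chern-class vanishing: any open $T$-stable $O$ with $c_j\ne0$ on $O$ for some weight $\lambda_j^{(i)}$ nontrivial on $\Gamma$ maps $T$-equivariantly to $\C^\times_{\lambda_j^{(i)}}\cong T/\ker\lambda_j^{(i)}$, and $\lambda_j^{(i)}\otimes k$ already dies in $H^2_K(\C^\times_{\lambda_j^{(i)}},k)$. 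This gives annihilation of $H_K^\bullet(O,\F)$ for \emph{every} open $T$-stable $O$ and \emph{every} object $\F$, which is exactly the uniformity the Mayer--Vietoris propagation step after Proposition~\ref{proposition:6} requires. Your Gysin computation only yields annihilation of $H_T^\bullet(V'\setminus\{0\},k)$ itself, not uniformly over all opens and coefficients, so the gluing step as you describe it would also need to be rebuilt. In short: your argument is correct when each $Y^{(i)}$ is open in $V^{(i)}$ (which is the case for the Bott--Samelson charts), but it does not prove the theorem as stated; the direct-limit/Quillen device and the Chern-class vanishing are what make the paper's proof work without manifold hypotheses.
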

\begin{proof} For simplicity of notation, we assume that $Y^{(i)}$ is a subset of $V^{(i)}$.
Let us write
$$
V^{(i)}=\C_{\lm^{(i)}_1}\oplus\cdots\oplus\C_{\lm^{(i)}_{n_i}},
$$
where $\C_\lm$ is the representation of $T$ with weight $\lm\in X(T)$.
Let $U$ be an open $K$-invariant neighbourhood of $X^\Gamma$ in $X$.
Then the set $Y^{(i)}\setminus U$ does not have $\Gamma$-fixed points, which we prefer to write as
\begin{equation}\label{eq:1}
(Y_i\setminus U)\cap(V^{(i)})^\Gamma=\emptyset.
\end{equation}

Without loss of generality, we can assume that $\lm^{(i)}_j$ restricts trivially to $\Gamma$ if and only if $j\le m_i$.
Then $(V^{(i)})^\Gamma$ consists of the points $(c_1,\ldots,c_{m_i},0,\ldots,0)$.
%Hence~(\ref{eq:1}) can be reformulated as follows
%\begin{equation}\label{eq:2}
%\text{\it If }(c_1,\ldots,c_{n_i})\in Y_i\setminus U\text{ \it then }c_j\ne0\text{ \it for some }j>m_i.
%\end{equation}
Consider the open subsets $W_j^{(i)}=\{(c_1,\ldots,c_{n_i})\in V^{(i)}\suchthat c_j\ne0\}$ of $V^{(i)}$.
It follows from~(\ref{eq:1}) that
$$
Y_i\setminus U\subset\bigcup_{j=m_i+1}^{n_i}W_j^{(i)}.
$$
%Каждое подмножество из объединения правой части допускает $T$-эквивариантное непрерывное
%отображение $\pr_j:W_j^{(i)}\to\C^\times_{\chi^{(i)}_j}\cong T/\ker\chi^{(i)}_j$.
For any set of the union in the right-hand side, there exists a $T$-equivariant map
$W_j^{(i)}\to\C^\times_{\lm^{(i)}_j}\cong T/\ker\lm^{(i)}_j$, which is the projection to the $j^{th}$ coordinate.

Let us take an open $K$-invariant subset $O\subset (Y^{(i)}\setminus U)\cap W_j^{(i)}$ for $j>m_i$.
Then composition of maps $O\hookrightarrow W_j^{(i)}\to\C^\times_{\lm^{(i)}_j}\to\pt$ gives rise
to the following sequence of cohomologies:
$$
H_K^2(\pt,k)\to H_K^2(\C^\times_{\lm^{(i)}_j},k)\to H_K^2(W_j^{(i)},k)\to H_K^2(O,k).
$$
%where the first map is the one defined in~\cite[\sectsign 1.9]{Jantzen}.???

Identifying $T$-equivariant and $K$-equivariant
cohomologies, we obtain that the image of the first Chern class $c_1(\lm^{(i)}_j)\otimes k$ is zero
(already for the first map as follows from~\cite[\sectsign 1.9(1)]{Jantzen}).
Writing this Chern class as $\lm^{(i)}_j\otimes k$, we get that it
annihilates $H_K^\bullet(O,\F)$ for any $\F\in D^b_K(O,k)$.

%{\it $\lm^{(i)}_j\otimes k$ acts trivially on $H_K^\bullet(O,\F)$ for any $\F\in D^b_K(O,k)$.}
% рассмотрим композицию
%$$
%O\hookrightarrow W_j^{(i)}\to\C^\times_{\chi^{(i)}_j}\to\pt
%$$

%To prove that some product $q$ of elements of the form $\lm^{(i)}_j\otimes k$ acts trivially on $H_K^\bullet(X\setminus U,k)$,
%it suffices to

Gluing all subsets $(Y^{(i)}\setminus U)\cap W_j^{(i)}$ by the Mayer-Vietoris
sequence for open subsets by the method described in Section~\ref{Mayer-Vietoris}, we get the following property:
\begin{equation}\label{eq:clubsuit}
\begin{array}{l}
\displaystyle\text{\it there exist naturals }a_i\;\text{\it such that } q=\prod_{i\in I}\prod_{j=m_i+1}^{n_i}(\lm_j^{(i)}\otimes k)^{a_i}\text{\it annihilates }H_K^\bullet(X\setminus U,\F)\\[-2pt]
\displaystyle\text{\it for any object }\F\in D^b_K(X\setminus U,k).
\end{array}
\end{equation}

Consider the following direct limit $L^n(k):={\dlim}_{U\supset X^\Gamma}H_K^n(U,k)$ that runs over
all $K$-invariant open neighbourhoods $U$ of $X^\Gamma$. Denote by $\alpha^n_U:H_K^n(U,k)\to L^n(k)$ its
natural morphisms. We define $L^\bullet(k)=\bigoplus_{n\in\Z}L^n(k)$. It is an $S_k$-module and
we get homomorphisms $\alpha_U^\bullet:H_K^\bullet(U,k)\to L^\bullet(k)$ of $S_k$-modules.

We are going to apply Lemma~\ref{lemma:1} to prove that $\alpha_X^\bullet$ becomes an isomorphism after inverting
$q$. We know that $q\in S^{2t}_k$ for some $t\in\Z$.

% с естественными морфизмами
%$\alpha^n_U:H_K^n(U,k)\to L^n(k)$. Мы очевидно можем образовать $H_K^\bullet(\pt,k)$-модуль
%$L^\bullet(k)=\bigoplus_{n\in\Z}L^n(k)$. В этом случае мы получаем гомоморфизмы $H_K^\bullet(\pt,k)$-модулей
%$\alpha_U^\bullet:H_K^\bullet(U,k)\to L^\bullet(k)$.

Let us check condition~\ref{lemma:1:part:1} of Lemma~\ref{lemma:1}. Let $u\in L^n(k)$. By the definition of the direct limit
$u=\alpha_U^n(\bar u)$ for some $\bar u\in H_K^n(U,k)$ and some $K$-invariant open $U$ containing $X^\Gamma$.
We have the following exact sequence
$$
\begin{CD}
H_K^\bullet(X,k)@>r_{U,X}^\bullet>>H_K^\bullet(U,k)@>\partial^\bullet>>H_K^{\bullet+1}(X\setminus U,i^!\csh kX),
\end{CD}
$$
where $i:X\setminus U\hookrightarrow X$ is the natural embedding. Hence and from~(\ref{eq:clubsuit}),
we get $\partial^{n+2t}(q\bar u)=q\partial^n(\bar u)=0$.
The exactness of the above sequence yields $q\bar u=r_{U,X}^{n+2t}(v)$ for some $v\in H_K^{n+2t}(X,k)$.
It remains to recall the following commutative diagram:
$$
\begin{tikzcd}
{}&L^\bullet(k)\\
H_K^\bullet(X,k)\arrow{ur}{\alpha_X^\bullet}\arrow{dr}[swap]{r_{U,X}^\bullet}&\\
&H_K^\bullet(U,k)\arrow{uu}[swap]{\alpha_U^\bullet}
\end{tikzcd}
$$
from the definition of the direct limit and write
$$
qu=q\alpha_U^n(\bar u)=\alpha_U^{n+2t}(q\bar u)=\alpha_U^{n+2t}\circ r_{U,X}^{n+2t}(v)=\alpha_X^{n+2t}(v).
$$

Let us check now condition~\ref{lemma:1:part:2} of  Lemma~\ref{lemma:1}. Take some $v\in H^n_K(X,k)$ such that
$\alpha^n_X(v)=0$. By the definition of the direct limit, we get $v|_U=0$ for some $K$-invariant open $U$ containing $X^\Gamma$.
The exactness of the sequence
$$
\begin{CD}
H_K^\bullet(X\setminus U,i^!\csh kX)@>\beta_U^\bullet>>H_K^\bullet(X,k)@>r_{U,X}^\bullet>>H_K^\bullet(U,k)
\end{CD}
$$
implies $v=\beta_U^n(w)$ for some $w\in H_K^\bullet(X\setminus U,i^!\csh kX)$.
Multiplying by $q$ and applying~(\ref{eq:clubsuit}), we get $qv=\beta_U^{n+2t}(qw)=0$.

The universal mapping property for direct limits yields the (unique) morphism $\gamma^\bullet$ such that
the diagram
$$
\begin{tikzcd}
L^\bullet(k)\arrow[dashed]{rr}{\gamma^\bullet}&&H_K^\bullet(X^\Gamma,k)\\
&H_K^\bullet(U,k)\arrow{ul}{\alpha_U^\bullet}\arrow{ur}[swap]{r^\bullet_{X^\Gamma,U}}&
\end{tikzcd}
$$
is commutative for any open $U$ containing $X^\Gamma$. By~\cite[(1.9)]{Quillen}, $\gamma^\bullet$ is an isomorphism.
It is obviously an isomorphism of $S_k$-modules.
Considering the case $U=X$ and applying the fact that $\alpha_X^\bullet$ becomes an isomorphism after inverting $q$,
we get that $r^\bullet_{X^\Gamma,X}$ also becomes an isomorphism after inverting $q$ and moreover after inverting all
elements of $\Lambda_\Gamma\otimes_\Z k$.
\end{proof}

As our next step, we explain how to adjust Theorem~6 from Brion's paper~\cite{Brion} to the case of %Bott-Samelson varieties and
arbitrary coefficients.

\begin{corollary}[cf. \mbox{\cite[Theorem 6]{Brion}}]\label{corollary:1}
Let $H_T^\bullet(X,k)$ be a free $S_k$-module. Under the hypothesis of Theorem~\ref{theorem:1} with the additional assumption
that $\Lm_T\otimes{\mathbf 1}_k$ does not contain zero divisors of $S_k$, the restriction
$$
i^*_{X,X^T}:H_T^\bullet(X,k)\to H_T^\bullet(X^T,k)
$$
is an embedding.

Moreover, if $ H^\bullet_T(X^T,k)$ does not have $S_k$-torsion (for example, $X^T$ is finite) and the following conditions hold:

{

\renewcommand{\labelenumi}{{\rm \theenumi}}
\renewcommand{\theenumi}{{\rm(C\arabic{enumi})}}

\begin{enumerate}
\item\label{corollary:1:condition:1} $k$ is a unique factorization domain;
\item\label{corollary:1:condition:2} $\lambda\otimes{\mathbf 1}_k$ is prime in $S(X(T)\otimes_\Z k)$ for any $\lm\in\Lambda_T$;
\item\label{corollary:1:condition:3} $\lm\otimes{\mathbf 1}_k\notin\Lm_{\ker\lm}\otimes_\Z k$ for any $\lm\in\Lambda_T$,
\end{enumerate}

}

%$\Theta_T$ мы получаем
then we have
$$
\im i^*_{X,X^T}=\bigcap_{\lm\in\Lm_T}\im i^*_{X^{\ker\lm},X^T}.
$$
\end{corollary}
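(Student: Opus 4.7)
The plan is to split the statement into the injectivity assertion and the image description. For the injectivity, the idea is to localize. Write $Q_T$ for the multiplicative set generated by $\Lm_T\otimes{\mathbf 1}_k$ in $S_k$. By hypothesis it consists of non-zero-divisors, hence the canonical map $H_T^\bullet(X,k)\to H_T^\bullet(X,k)[Q_T^{-1}]$ is injective since $H_T^\bullet(X,k)$ is free over $S_k$. By Theorem~\ref{theorem:1} the localized restriction $H_T^\bullet(X,k)[Q_T^{-1}]\xrightarrow{\sim}H_T^\bullet(X^T,k)[Q_T^{-1}]$ is an isomorphism, so the composition $H_T^\bullet(X,k)\hookrightarrow H_T^\bullet(X^T,k)[Q_T^{-1}]$ is injective. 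Since this composition factors through $i^*_{X,X^T}$, the latter is an embedding.

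For the image description, the easy inclusion $\im i^*_{X,X^T}\subset\bigcap_{\lm\in\Lm_T}\im i^*_{X^{\ker\lm},X^T}$ is obtained by factoring $i_{X,X^T}$ through $X^T\hookrightarrow X^{\ker\lm}\hookrightarrow X$ for each $\lm\in\Lm_T$. For the reverse inclusion, take $f\in\bigcap_\lm\im i^*_{X^{\ker\lm},X^T}$. The $S_k$-torsion-freeness of $H_T^\bullet(X^T,k)$ makes $H_T^\bullet(X^T,k)\hookrightarrow H_T^\bullet(X^T,k)[Q_T^{-1}]$ injective, so we may identify $f$ with $f/1$. Theorem~\ref{theorem:1} provides $h\in H_T^\bullet(X,k)$ and $s\in Q_T$ with $i^*_{X,X^T}(h)/s=f/1$; the plan is to show that $s\mid h$ in $H_T^\bullet(X,k)$, after which $g:=h/s\in H_T^\bullet(X,k)$ satisfies $i^*_{X,X^T}(g)=f$ by torsion-freeness of the target.

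To prove the divisibility, invoke (C1) so that $S_k$ is a UFD and argue by induction on the number of prime factors of $s$; by (C2) these may be taken to be elements of $\Lm_T\otimes{\mathbf 1}_k$. The base case where $s$ is a unit is trivial. For the inductive step, pick a prime factor $\lm\otimes{\mathbf 1}_k$ of $s$ and use the hypothesis to write $f=i^*_{X^{\ker\lm},X^T}(f_\lm)$ for some $f_\lm\in H_T^\bullet(X^{\ker\lm},k)$. Let $Q_{\ker\lm}$ denote the multiplicative set in $S_k$ generated by $\Lm_{\ker\lm}\otimes{\mathbf 1}_k$; note $Q_{\ker\lm}\subset Q_T$. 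Theorem~\ref{theorem:1} with $\Gamma=\ker\lm$ realizes $f_\lm/1$ as the image of some $\tilde h/\tilde s$ with $\tilde h\in H_T^\bullet(X,k)$ and $\tilde s\in Q_{\ker\lm}$. Both $h/s$ and $\tilde h/\tilde s$ lie in $H_T^\bullet(X,k)[Q_T^{-1}]$ and map to $f/1$ under the isomorphism of Theorem~\ref{theorem:1}, whence $h/s=\tilde h/\tilde s$; by $Q_T$-torsion-freeness of the free $S_k$-module $H_T^\bullet(X,k)$, this yields the identity $\tilde s\,h=s\,\tilde h$ in $H_T^\bullet(X,k)$. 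By (C3) combined with (C2), the prime $\lm\otimes{\mathbf 1}_k$ is not associate in $S_k$ to any prime factor of $\tilde s$, so $\lm\otimes{\mathbf 1}_k\nmid\tilde s$. Expanding $h$ and $\tilde h$ in an $S_k$-basis of $H_T^\bullet(X,k)$ and applying primality of $\lm\otimes{\mathbf 1}_k$ to each scalar equation $\tilde s\,h_j=s\,\tilde h_j$ gives $\lm\otimes{\mathbf 1}_k\mid h_j$, hence $\lm\otimes{\mathbf 1}_k\mid h$. Cancel this factor from both $h$ and $s$ and apply the induction hypothesis.

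The main obstacle is the bookkeeping between the two localizations $Q_T$ and $Q_{\ker\lm}$ needed to produce the identity $\tilde s\,h=s\,\tilde h$, and the extraction of $\lm\otimes{\mathbf 1}_k\mid h$ from it; this step is where all three assumptions (C1)--(C3) enter essentially, together with the freeness of $H_T^\bullet(X,k)$ which allows divisibility to be checked coordinatewise in a free basis.
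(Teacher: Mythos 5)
Your proposal is correct and follows essentially the same route as the paper: both argue injectivity by localizing at $\Lm_T\otimes\mathbf 1_k$ and using freeness, and both establish the image description by expanding the $Q_T^{-1}$-preimage of $u$ in a free basis, then clearing the prime denominator $\lm\otimes\mathbf 1_k$ by invoking the localization isomorphism of Theorem~\ref{theorem:1} for $\Gamma=\ker\lm$ and using (C1)--(C3). The only cosmetic difference is that you organize the denominator-clearing step as an induction on the number of prime factors of $s$, whereas the paper phrases it as a proof by contradiction via the coordinate functionals $f_j$; the two are the same argument, since your $h_j/s$ is exactly the paper's $f_j(u)$.
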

\begin{proof} Let $M=H_T^\bullet(X,k)$, $N=H_T^\bullet(X^T,k)$, $R=\Lambda_T\otimes_\Z k$, $S=S_k$, $S'=R^{-1}S$,
$M'=R^{-1}M$, $N'=R^{-1}N$, $\phi=i^*_{X,X^T}$ and $\phi'$ be the morphism from $M'$ to $N'$ induced by $\phi$.
By Theorem~\ref{theorem:1}, $\phi'$ is an isomorphism.

Let $\{e_j\}_{j\in J}$ be an $S$-basis of $M$. Then $\{e_j/1\}_{j\in J}$ is an $S'$-basis of $M'$.
Suppose that $\phi(\alpha_1e_{j_1}+\cdots+\alpha_ke_{j_k})=0$ for $\alpha_1,\ldots,\alpha_k\in S$ and mutually distinct indices
$j_1,\ldots,j_k\in J$. We get
$$
\phi'\(\frac{\alpha_1}1\cdot\frac{e_{j_1}}1+\cdots+\frac{\alpha_k}1\cdot\frac{e_{j_k}}1\)=
\phi'\(\frac{\alpha_1e_{j_1}+\cdots+\alpha_ke_{j_k}}1\)=\frac{\phi(\alpha_1e_{j_1}+\cdots+\alpha_ke_{j_k})}1=0.
$$
Hence $\alpha_1/1=\cdots=\alpha_k/1=0$ in $S'$. Therefore $\alpha_1=\cdots=\alpha_k=0$, as $R$ does not contain zero divisors.

Now let us prove the second statement. Let $e^*_j:M\to S$ and $(e')^*_j:M'\to S'$ be the $j^{th}$ coordinate functions
for $M$ and $M'$, respectively. Consider the commutative diagram
$$
\begin{tikzcd}
S\arrow[hook]{d}[swap]{\iota}&M\arrow{l}[swap]{e_j^*}\arrow{r}{\phi}\arrow[hook]{d}&N\arrow[hook]{d}\arrow[dashed]{dll}\\
S'&M'\arrow{l}{(e')_j^*}&N'\arrow{l}{(\phi')^{-1}}[swap]{\sim}
\end{tikzcd}
$$
Denoting the dashed arrow by $f_j$, we get the following relation:
\begin{equation}\label{eq:4}
f_j\circ \phi=\iota\circ e^*_j.
\end{equation}
Note that all functions $f_j$ uniquely define elements of $N$:
\begin{equation}\label{eq:3}
f_j(u)=f_j(u')\;\forall j\in J\Longrightarrow u=u'.
\end{equation}

Let us take $u\in\bigcap_{\lm\in\Lambda_T}\im  i^*_{X^{\ker\lm},X^T}$. %Мы можем считать $u$ однородным элементом.
Consider the coefficients $f_j(u)\in S'$. If they all belong to $\iota(S)$, then in view of~(\ref{eq:4}),
the following calculation is possible:
$$
f_j\circ \phi\(\sum_{j\in J}\iota^{-1}(f_j(u))e_j\)=\iota\circ e^*_j\(\sum_{j\in J}\iota^{-1}(f_j(u))e_j\)=f_j(u).
$$
Now~(\ref{eq:3}) implies that $u=\phi\(\sum_{j\in J}f_j(u)e_j\)\in\im i^*_{X,X^T}$.

It only remains to prove that $f_j(u)\in S$ for all $j\in J$. Suppose the contrary holds.
By~\ref{corollary:1:condition:2}, in this case $f_j(u)$ contains an uncancellable prime denominator $\lm\otimes{\mathbf 1}_k$
for some $j\in J$ and $\lm\in\Lm_T$.

To proceed, let us introduce the following notation: $\Gamma=\ker\lm$, $N_\lm=H_T^\bullet(X^\Gamma,k)$,
$R_\lm=\Lm_\Gamma\otimes_\Z k$, $S'_\lm=R_\lm^{-1}S$, $M'_\lm=R_\lm^{-1}M$, $N'_\lm=R_\lm^{-1}N_\lm$,
$\phi_\lm=i^*_{X,X^\Gamma}$ and $\phi'_\lm$ be the morphism from $M'_\lm$ to $N'_\lm$ induced by $\phi_\lm$.
By theorem~\ref{theorem:1}, $\phi'_\lm$ is an isomorphism.

As $u\in\im i^*_{X^\Gamma,X^T}$, we can write $u=i^*_{X^\Gamma,X^T}(v)$ for some $v\in N_\lm$.
%Нарисуем диаграмму аналогичную диаграмме выше:
Similarly to the diagram above, we have the following commutative diagram:
$$
\begin{tikzcd}
M\arrow{r}{\phi_\lm}\arrow[hook]{d}&N_\lm\arrow{d}\\
M'_\lm&N'_\lm\arrow{l}{(\phi'_\lm)^{-1}}[swap]{\sim}
\end{tikzcd}
$$
There exists some product $\P_\lm$ of elements of $R_\lm$ such that
$(\P_\lm/1)(\phi'_\lm)^{-1}(v/1)=m/1$ for some $m\in M$. Applying $\phi'_\lm$ to this equality, we get
$\P_\lm v/1=(\phi'_\lm)(m/1)=\phi_\lm(m)/1$, which is an equality in $N'_\lm$. Therefore,
there exists another product $\P'_\lm$ of elements of $R_\lm$ such that
$$
\P'_\lm\P_\lm v=\P'_\lm\phi_\lm(m)=\phi_\lm(\P'_\lm m).
$$
Applying $i_{X^\Gamma,X^T}^*$ to the both sides of this equality, we get
$$
\P'_\lm\P_\lm u=i_{X^\Gamma,X^T}^*(\P'_\lm\P_\lm v)=i_{X^\Gamma,X^T}^*\circ\phi_\lm(\P'_\lm m)=\phi(\P'_\lm m).
$$
Finally applying $f_j$, we get
$$
(\P'_\lm\P_\lm/1)f_j(u)=f_j(\P'_\lm\P_\lm u)=f_j\circ \phi(\P'_\chi m)=e_j^*(\P'_\chi m)/1\in\iota(S).
$$
This is a contradiction, as $\P'_\lm\P_\lm$ by our GKM-restriction~\ref{corollary:1:condition:3} does not have
factors proportional to $\lm\otimes{\mathbf 1}_k$.
\end{proof}

\subsection{Case of Bott-Samelson varieties} In what follows, we shall only consider the case
where the ring of coefficients $k$ is a PID %an integral domain
of characteristic not equal to $2$ if the root system contains a component of type $C_n$.
As the ordinary cohomology $H^\bullet(\Sigma,k)$ vanishes in odd degrees and is a free $k$-module in each degree,
the degeneracy of the Leray spectral sequence at the $E_2$-term implies
$$
H_T^\bullet(\Sigma,k)\simeq H^\bullet(\Sigma,k)\otimes_kS_k.
$$
Therefore, we can apply the first part of Corollary~\ref{corollary:1} to prove that the restriction morphism
$H_T^\bullet(\Sigma,k)\to H_T^\bullet(\Gamma,k)$ is an embedding. We denote its image by $\X(k)$.

Similarly,
$
H_T^\bullet(\Sigma_x,k)\simeq H^\bullet(\Sigma_x,k)\otimes_kS_k
$
and we can apply Corollary~\ref{corollary:1} to prove that the restriction morphism
$H_T^\bullet(\Sigma_x,k)\to H_T^\bullet(\Gamma_x,k)$ is an embedding. We denote its image by $\X^x(k)$.

In order to ensure conditions~\ref{corollary:1:condition:1}--\ref{corollary:1:condition:3} of Corollary~\ref{corollary:1},
we want to fix ring $\Z'$ for each root system as follows: $\Z'=\Z[1/2]$ if the root system contains a component of type $C_n$
and $\Z'=\Z$ otherwise. This choice automatically guarantees that Theorem~\ref{theorem:1} and Corollary~\ref{corollary:1}
hold for $k=\Z'$, since $\Lambda_T\subset\Phi$ in these assertions.

Therefore, from now on, we will assume that the cohomologies (ordinary and equivariant) are taken with
coefficients $\Z'$ unless otherwise explicitly stated. We also set $S=S_{\Z'}$, $\X=\X(\Z')$ and $\X^x=\X^x(\Z')$.

%We denote its image by $\Theta^\bullet$.
%
%
%The same argument applies to $\Sigma_x$. We denote the image of the embedding
%$i_{\Sigma_x,\Gamma_x}^*:H_T^\bullet(\Sigma_x)\to H_T^\bullet(\Gamma_x)$ by $\Theta_x^\bullet$.
%and the image of the restriction $i_{\OSigma_x,\OGamma_x}^*:H_T^\bullet(\OSigma_x)\to H_T^\bullet(\OGamma_x)$
%(which we will soon prove to be an embedding) by $\OTheta_x^\bullet$.
%
%\section{Bases for the images $\Theta^\bullet$ and $\Theta_x^\bullet$}
%\section{Description of the images $\Theta^\bullet$, $\Theta_x^\bullet$, and $\OTheta_x^\bullet$}
%\section{Description of the images of restrictions}

\section{Bases of the images $\X$ and $\X^x$}\label{Bases_of_XXx}

\subsection{H\"arterich's localization theorems} We formulate here the following two results due to H\"arterich~\cite{Haerterich}.
It is important to note that one needs to be more careful with the ring of coefficients when applying the localization theorems
in the proofs of these results. For our ring of coefficients $\Z'$, one can apply Theorem~\ref{theorem:1} and
Corollary~\ref{corollary:1}.

%The criterion for an element %$f\in H_T^\bullet(\Gamma)$
%to belong to
%the image of the restriction $i_{\Sigma,\Gamma}^*:H_T^\bullet(\Sigma)\to H_T^\bullet(\Gamma)$ is already given
%by~\cite[Theorem 6.2]{Haerterich}.

\begin{proposition}[\mbox{\cite[Theorem 6.2]{Haerterich}}]\label{proposition:1}
An element $f\in H_T^\bullet(\Gamma)$ belongs to the image $\X$ of the restriction
$i_{\Sigma,\Gamma}^*:H_T^\bullet(\Sigma)\to H_T^\bullet(\Gamma)$ if and only if
$$
\sum_{\delta\in\Gamma,\delta\sim_\alpha\gamma,J_\alpha(\delta)\subset J_\alpha(\gamma)}(-1)^{|J_\alpha(\delta)|}f(\delta)\=0\pmod{\alpha^{|J_\alpha(\gamma)|}}
$$
for any positive root $\alpha$ and gallery $\gamma\in\Gamma$.
\end{proposition}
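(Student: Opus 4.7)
The plan is to combine the GKM-style intersection formula from Corollary~\ref{corollary:1} with a reduction to the rank-one case $G_\alpha\in\{\SL_2(\C),\PSL_2(\C)\}$. First I would check that the hypotheses of the second part of Corollary~\ref{corollary:1} apply to $X=\Sigma$, $X^T=\Gamma$: freeness of $H_T^\bullet(\Sigma)$ over $S$ was observed in Section~3.5, $H_T^\bullet(\Gamma)$ is free of rank $|\Gamma|$ over $S$ since $\Gamma$ is finite, and the choice of $\Z'$ was designed precisely to guarantee \ref{corollary:1:condition:1}--\ref{corollary:1:condition:3} for the weights $\Lm_T\subset\Phi$. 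Consequently
$$
\X=\im i_{\Sigma,\Gamma}^*=\bigcap_{\alpha\in\Phi^+}\im i_{\Sigma^{\ker\alpha},\Gamma}^*,
$$
so it is enough, for each positive root $\alpha$, to characterize which $f\in H_T^\bullet(\Gamma)$ extend over $\Sigma^{\ker\alpha}$.

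Next I would describe the geometry of $\Sigma^{\ker\alpha}$. Since $\ker\alpha$ acts trivially on the factor $P_{\alpha_i}$ only when $\alpha_i=\pm\alpha$, I expect the connected components of $\Sigma^{\ker\alpha}$ to be indexed by the $\sim_\alpha$-equivalence classes in $\Gamma$, and the component through $\gamma$ to be $T$-equivariantly isomorphic to the Bott-Samelson variety $\Sigma^2_{|M_\alpha(\gamma)|}$ for $G_\alpha$ built from the reflections at the positions in $M_\alpha(\gamma)$; the factors with indices outside $M_\alpha(\gamma)$ collapse to the single fixed point $\gamma_i$. Because cohomology of a disjoint union splits, the intersection formula splits into one condition per $\sim_\alpha$-equivalence class, and the $T$-fixed points on this component are exactly $\{\delta\in\Gamma\suchthat\delta\sim_\alpha\gamma\}$.

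The problem now reduces to the rank-one statement: for $\Sigma^2_\ell$ with $T$-fixed points $\Gamma^2_\ell$, an element $f\in H_T^\bullet(\Gamma^2_\ell)$ lies in the image of $H_T^\bullet(\Sigma^2_\ell)$ if and only if
$$
\sum_{\delta\in\Gamma^2_\ell,\,J_\alpha(\delta)\subset J_\alpha(\gamma)}(-1)^{|J_\alpha(\delta)|}f(\delta)\equiv0\pmod{\alpha^{|J_\alpha(\gamma)|}}
$$
for every $\gamma\in\Gamma^2_\ell$. I would prove this by induction on $\ell$, viewing $\Sigma^2_\ell\to\Sigma^2_{\ell-1}$ as a $\mathbb{P}^1$-bundle. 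The base case $\ell=1$ is the classical GKM congruence $f(e)-f(s_\alpha)\equiv0\pmod\alpha$ on $\mathbb{P}^1$, valid over $\Z'$ by Atiyah-Bott applied to $\SL_2(\C)$ or $\PSL_2(\C)$ (the latter requiring $2$ invertible, whence our convention on $\Z'$). For the inductive step I would use a Leray-Hirsch type decomposition of $H_T^\bullet(\Sigma^2_\ell)$ as a rank-two module over $H_T^\bullet(\Sigma^2_{\ell-1})$, translating the $(\ell-1)$-level congruence into the $\ell$-level one and invoking Proposition~\ref{lemma:4} to decide whether the new index $\ell$ contributes to $J_\alpha(\gamma)$ (which is controlled by $\pi(\gamma)$ and the parity of the previous membership).

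The main obstacle I anticipate is twofold. First, identifying the component of $\Sigma^{\ker\alpha}$ through $\gamma$ with $\Sigma^2_{|M_\alpha(\gamma)|}$ equivariantly, including the correct identification of the $T$-weights on the factors --- this requires pinning down how the $B^r$-action collapses the inert factors and why the remaining tower is genuinely the Bott-Samelson variety for $G_\alpha$. Second, propagating the alternating-sign divisibility through the inductive step: the bookkeeping of which subset of $M_\alpha(\gamma)$ constitutes $J_\alpha(\delta)$ as $\delta$ varies in its $\sim_\alpha$-class, with the nested pattern forced by Proposition~\ref{lemma:4} (so that $i_j\in J_\alpha(\delta)\Leftrightarrow i_{j+1}\in D_\alpha(\delta)$ for interior indices), is what produces the precise congruence $\sum(-1)^{|J_\alpha(\delta)|}f(\delta)\equiv0\pmod{\alpha^{|J_\alpha(\gamma)|}}$ rather than a weaker one.
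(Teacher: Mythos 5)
Your plan follows essentially the same route the paper takes. The paper does not reproduce a proof of Proposition~\ref{proposition:1}; it defers to H\"arterich's preprint, or to deriving it ``by similarity'' from the proof of Proposition~\ref{proposition:3}, and that proof implements exactly your three steps: apply the intersection formula of Corollary~\ref{corollary:1} to reduce to subtori $T_\alpha=\ker\alpha$, decompose $\Sigma^{T_\alpha}$ into pieces $\im v_\gamma^\alpha$ indexed by $\sim_\alpha$-classes (H\"arterich's Corollary~4.4, invoked via Corollary~\ref{corollary:2} of this paper), identify each piece $T$-equivariantly with $\Sigma^2_{|M_\alpha(\gamma)|}$ via $v_\gamma^\alpha\circ\iota$, and then import the rank-one image criterion. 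The only place you diverge is that you propose to re-derive the rank-one criterion by a $\mathbb{P}^1$-bundle/Leray-Hirsch induction, whereas the paper simply cites H\"arterich's Proposition~5.4(a) (whose proof in turn reduces to the $\SL_2$/$\PSL_2$ computation handled by Atiyah--Bott, as the paper remarks). This is not a gap but an alternative path through the same reduction: your induction would essentially reconstruct that cited lemma, trading a reference for extra bookkeeping of how $J(\delta)$ grows along the tower. One small caution on wording: the $\im v_\gamma^\alpha$ are pairwise disjoint open-and-closed $T$-stable pieces of $\Sigma^{T_\alpha}$, which is what Corollary~\ref{corollary:2} needs; you don't need (and shouldn't assert without checking) that they are literally the connected components.
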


%A similar result holds for the image of the restriction
%$i_{\Sigma_x,\Gamma_x}^*:H_T^\bullet(\Sigma_x)\to H_T^\bullet(\Gamma_x)$~\cite[Theorem 6.3]{Haerterich}.

\begin{proposition}[\mbox{\cite[Theorem 6.3]{Haerterich}}]\label{proposition:2} An element $f\in H_T^\bullet(\Gamma_x)$ belongs to the image $\X^x$ of the restriction
$i_{\Sigma_x,\Gamma_x}^*:H_T^\bullet(\Sigma_x)\to H_T^\bullet(\Gamma_x)$ if and only if
$$
\sum_{\delta\in\Gamma_x,\delta\sim_\alpha\gamma,D_\alpha(\delta)\subset D_\alpha(\gamma)}(-1)^{|D_\alpha(\delta)|}f(\delta)\=0\pmod{\alpha^{|D_\alpha(\gamma)|}}
$$
for any positive root $\alpha$ and gallery $\gamma\in\Gamma_x$.
\end{proposition}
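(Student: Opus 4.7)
The plan is to apply the second (``Moreover'') part of Corollary~\ref{corollary:1} to $X = \Sigma_x$. First I would verify its hypotheses: the fiber $\Sigma_x$ admits an affine paving compatible with the $T$-action (inherited from the Bia\l ynicki-Birula decomposition of $\Sigma$), so $H^\bullet(\Sigma_x,\Z')$ is free over $\Z'$ and concentrated in even degrees. The Leray spectral sequence then degenerates at $E_2$, giving $H_T^\bullet(\Sigma_x) \simeq H^\bullet(\Sigma_x,\Z') \otimes_{\Z'} S$ as a free $S$-module. Since $\Gamma_x$ is finite, $H_T^\bullet(\Gamma_x)$ is $S$-torsion free. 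By our choice of $\Z'$, conditions~\ref{corollary:1:condition:1}--\ref{corollary:1:condition:3} are automatic. Consequently,
\[
\X^x = \bigcap_{\alpha \in \Phi^+} \im i^*_{\Sigma_x^{\ker\alpha},\Gamma_x}.
\]

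Fix a positive root $\alpha$. I would next identify the connected components of the fixed locus $\Sigma_x^{\ker\alpha}$. Using the standard affine charts $U^\gamma$ introduced in Section~\ref{Bott-Samelson_varieties} together with the observation that $\ker\alpha$ acts trivially on the factor $x_{\gamma^i(-\alpha_i)}(c_i)$ precisely when $i \in M_\alpha(\gamma)$, one sees that the component of $\Sigma_x^{\ker\alpha}$ through $\gamma \in \Gamma_x$ has $T$-fixed point set equal to the $\sim_\alpha$-equivalence class of $\gamma$ intersected with $\Gamma_x$. By Corollary~\ref{corollary:2}, membership in $\X^x$ then reduces to verifying the image condition separately on each such component.

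For a fixed $\sim_\alpha$-class $\mathcal C \subset \Gamma_x$, I would identify the corresponding component of $\Sigma_x^{\ker\alpha}$ geometrically as a suitable fiber of the Bott-Samelson variety for the rank-one group $G_\alpha$ associated to the subsequence of $s$ at positions $M_\alpha(\gamma)$. Since $G_\alpha \simeq \SL_2(\C)$ or $\PSL_2(\C)$, this is a smooth iterated $\mathbb{P}^1$-bundle, and the classical Atiyah--Bott localization applies with $\Z'$-coefficients: the image of $H_T^\bullet(\mathbb{P}^1) \to H_T^\bullet(\{0,\infty\})$ consists of the pairs $(f_0,f_\infty)$ with $f_0 \equiv f_\infty \pmod{\alpha}$. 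Iterating this Mayer--Vietoris inclusion-exclusion argument across the $|M_\alpha(\gamma)|$ $\mathbb{P}^1$-factors produces a divisibility condition modulo a power of $\alpha$ whose exponent equals the number of ``free'' $\mathbb{P}^1$-factors remaining after imposing $\pi(\delta) = x$. This last constraint eliminates exactly one factor --- the one at the last position of $M_\alpha(\gamma)$, as encoded combinatorially by the shift $J_\alpha \leftrightarrow D_\alpha$ in Proposition~\ref{lemma:4} --- yielding the exponent $|D_\alpha(\gamma)|$ rather than $|J_\alpha(\gamma)|$ as in Proposition~\ref{proposition:1}.

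The main obstacle will be the geometric identification of components of $\Sigma_x^{\ker\alpha}$ as fibers of lower rank Bott-Samelson varieties, together with the precise accounting of how the constraint $\pi(\delta) = x$ decreases the exponent of $\alpha$ by exactly one. Once this geometric picture is in place, the cohomological inclusion-exclusion along the lines of the proof of Theorem~\ref{theorem:1} yields the stated divisibility, and passing from $\C$-coefficients to $\Z'$-coefficients is accommodated by the versions of the localization theorems developed in Section~\ref{Localization_theorems}.
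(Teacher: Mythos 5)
Your plan matches the route the paper itself indicates: apply the second part of Corollary~\ref{corollary:1} to $\Sigma_x$, decompose the fixed locus $\Sigma_x^{\ker\alpha}$ into connected components indexed by $\sim_\alpha$-classes, identify each component with a power of $G_\alpha/B_\alpha$, and translate the resulting rank-one criterion back through the $J_\alpha\leftrightarrow D_\alpha$ bijection of Proposition~\ref{lemma:4}. The paper does not re-prove this statement but cites H\"arterich and notes the analogy with Proposition~\ref{proposition:3}, whose proof in Section~\ref{DescriptionofbarX} carries out exactly these steps via H\"arterich's explicit embeddings $v_\gamma^\alpha:(G_\alpha/B_\alpha)^{\ell}\hookrightarrow\Sigma$ and the intersection formulas~(\ref{eq:41}) and~(\ref{eq:45}).

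One step, however, is more than bookkeeping. ``Iterating the Atiyah--Bott $\mathbb{P}^1$ criterion across the $\mathbb{P}^1$-factors'' does not by itself yield the alternating-sum congruence you need. Under $\iota$ each component is $(G_\alpha/B_\alpha)^{\ell-1}$ with $T$ acting through the single character $\alpha$ on every factor, so all tangent weights at every fixed point are proportional and the space is not GKM; the pairwise conditions $f_\delta\equiv f_{\delta'}\pmod{\alpha}$ along one-dimensional orbit closures cut out a strictly larger module than the actual image of the restriction to fixed points. The correct criterion for $(G_\alpha/B_\alpha)^{\ell}$ is H\"arterich's Proposition~5.4(a), which the paper invokes directly in the proof of Proposition~\ref{proposition:3} at~(\ref{eq:12}); you would need either to cite it or to re-derive it by an honest induction over the iterated $\mathbb{P}^1$-bundle, and the appeal to ``Mayer--Vietoris inclusion-exclusion'' as written leaves that step open. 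The remaining pieces --- freeness of $H_T^\bullet(\Sigma_x)$ from the affine paving, the decomposition via Corollary~\ref{corollary:2}, and the drop in exponent from $|J_\alpha(\gamma)|$ to $|D_\alpha(\gamma)|$ coming from fixing the last coordinate --- line up with the paper's treatment.
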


The reader can find the proofs of these results either in H\"arterich's original preprint~\cite{Haerterich}
or derive them from the proof of Proposition~\ref{proposition:3} by similarity.

%\subsection{$\Delta f$, $\nabla_e f$, $\nabla_{s_r} f$}
\subsection{Copy and concentration}\label{Copy and concentration}
In this section, we describe two ways to get elements of $\X$ from elements of $\X'$. % if $r>0$.
Suppose that $r>0$.
For $f'\in H^\bullet_T(\Gamma')$, we define its {\it copy} $\Delta f'\in H^\bullet_T(\Gamma)$ by
$\Delta f'(\gamma)=f'(\gamma')$ for any $\gamma\in\Gamma$.
Clearly, $\Delta$ is an $S$-linear operation.

\begin{lemma}\label{lemma:5}
$\Delta f'\in\X$ if $f'\in\X'$.
\end{lemma}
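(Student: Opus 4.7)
The natural tool is Proposition~\ref{proposition:1}: to verify $\Delta f'\in\X$ it suffices to check, for every positive root $\alpha$ and every $\gamma\in\Gamma$, the congruence
$$
S_\gamma:=\sum_{\delta\sim_\alpha\gamma,\,J_\alpha(\delta)\subset J_\alpha(\gamma)}(-1)^{|J_\alpha(\delta)|}f'(\delta')\equiv0\pmod{\alpha^{|J_\alpha(\gamma)|}}.
$$
Since $\bbeta_i(\gamma')=\bbeta_i(\gamma)$ for $i\le r-1$, one has $M_\alpha(\gamma')=M_\alpha(\gamma)\cap\{1,\ldots,r-1\}$ and similarly $J_\alpha(\gamma')=J_\alpha(\gamma)\cap\{1,\ldots,r-1\}$. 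The plan is to analyse the fibres of the truncation $\delta\mapsto\delta'$ on the index set of $S_\gamma$ and to reduce the congruence to Proposition~\ref{proposition:1} applied to $f'$ at $\gamma'$. The analysis splits first according to whether $r\in M_\alpha(\gamma)$ and, in the affirmative case, to whether $r\in J_\alpha(\gamma)$.

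If $r\notin M_\alpha(\gamma)$, any $\delta\sim_\alpha\gamma$ must satisfy $\delta_r=\gamma_r$, so $r\notin J_\alpha(\delta)$ and truncation becomes a bijection onto $\{\delta'\in\Gamma':\delta'\sim_\alpha\gamma'\}$ preserving both $|J_\alpha|$ and the condition $J_\alpha(\delta')\subset J_\alpha(\gamma')$; combined with $|J_\alpha(\gamma)|=|J_\alpha(\gamma')|$, the congruence reduces directly to the hypothesis $f'\in\X'$. If instead $r\in M_\alpha(\gamma)$, the fibre of truncation over each admissible $\delta'$ consists of the two galleries $\delta'\cdot e$ and $\delta'\cdot s_r$; evaluating $\bbeta_r(\delta)=\delta^{r-1}\delta_r(-\alpha_r)$ shows that these two lifts produce opposite signs $\pm\alpha$, hence exactly one of them places $r$ in $J_\alpha(\delta)$. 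In the sub-case $r\in J_\alpha(\gamma)$, both lifts satisfy $J_\alpha(\delta)\subset J_\alpha(\gamma)$, the cardinality $|J_\alpha(\delta)|$ toggles by one between them, and the corresponding contributions cancel, giving $S_\gamma=0$. In the sub-case $r\notin J_\alpha(\gamma)$, the inclusion forces $r\notin J_\alpha(\delta)$, only one lift survives with $|J_\alpha(\delta)|=|J_\alpha(\delta')|$, and $S_\gamma$ again coincides with the Proposition~\ref{proposition:1} sum for $f'$ at $\gamma'$, with matching modulus $\alpha^{|J_\alpha(\gamma')|}=\alpha^{|J_\alpha(\gamma)|}$.

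The only delicate point is the sign-cancellation in the sub-case $r\in J_\alpha(\gamma)\cap M_\alpha(\gamma)$: for it to work, both galleries $\delta'\cdot e$ and $\delta'\cdot s_r$ must genuinely lie in the $\sim_\alpha$-class of $\gamma$, which in turn rests on the $\sim_\alpha$-invariance of $M_\alpha$ recorded (``easily checked'') just before Proposition~\ref{lemma:4}. Everything else is combinatorial bookkeeping: tracking how $M_\alpha$, $J_\alpha$, and the equivalence relation $\sim_\alpha$ restrict under the truncation $\gamma\mapsto\gamma'$.
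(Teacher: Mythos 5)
Your proof is correct and takes essentially the same approach as the paper: reduce via Proposition~\ref{proposition:1}, split on whether $r\in M_\alpha(\gamma)$ and then on whether $r\in J_\alpha(\gamma)$, observe that $r\notin M_\alpha(\gamma)$ and $r\in M_\alpha(\gamma)\setminus J_\alpha(\gamma)$ both reduce to the truncated congruence for $f'$ at $\gamma'$, while $r\in J_\alpha(\gamma)$ gives a pairwise cancellation between the two lifts $\delta'\cdot e$ and $\delta'\cdot s_r$. The only difference is cosmetic: the paper presents the three cases in a different order.
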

\begin{proof} By Proposition~\ref{proposition:1}, we must prove that
\begin{equation}\label{eq:17}
\sum_{\delta\in\Gamma,\delta\sim_\alpha\gamma,J_\alpha(\delta)\subset J_\alpha(\gamma)}(-1)^{|J_\alpha(\delta)|}f'(\delta')\=0\pmod{\alpha^{|J_\alpha(\gamma)|}}
\end{equation}
for any $\gamma\in\Gamma$ and $\alpha\in\Phi^+$. We shall use the notation $M_\alpha(\gamma)=\{i_1<\cdots<i_\ll\}$ and $x=\pi(\gamma)$.

{\it Case 1: $r\notin M_\alpha(\gamma)$.}
In this case, $\delta\sim_\alpha\gamma$ implies $\delta_r=\gamma_r$. Therefore, we can rewrite~(\ref{eq:17}) as follows
\begin{equation}\label{eq:18}
\sum_{\delta'\in\Gamma',\delta'\sim_\alpha\gamma',J_\alpha(\delta')\subset J_\alpha(\gamma')}(-1)^{|J_\alpha(\delta')|}f'(\delta')\=0\pmod{\alpha^{|J_\alpha(\gamma')|}},
\end{equation}
which holds by Proposition~\ref{proposition:1} applied to $f'\in\X'$.

{\it Case 2: $r\in M_\alpha(\gamma)\setminus J_\alpha(\gamma)$.}
%
% The proof in this case follows easily from the following
%two properties:
%
%\begin{enumerate}
%\item\label{lemma:5:case:2:1} Suppose that $\delta,\tau\in\Gamma$ are such that $\delta\ne\tau$, $\delta\sim_\alpha\gamma$, $\tau\sim_\alpha\gamma$, $\delta'=\tau'$.
%Then $r$ belongs to exactly one set $J_\alpha(\delta)$ or $J_\alpha(\tau)$.
%\item\label{lemma:5:case:2:2} Let $\delta'\in\Gamma'$ be a gallery such that $\delta'\sim_\alpha\gamma'$. Then $r$ belongs to exactly one set
%$J_\alpha(\delta'\cdot e)$ or $J_\alpha(\delta'\cdot s_r)$.
%\end{enumerate}
%Indeed, to prove~\ref{lemma:5:case:2:1}, note that $\bbeta_r(\delta)=\pm\alpha$ and $\bbeta_r(\delta)=-\bbeta_r(\tau)$.
%To prove~\ref{lemma:5:case:2:2}, note that $\delta'\cdot e\sim_\alpha\gamma$ and $\delta'\cdot s_r\sim_\alpha\gamma$ as $r\in M_\alpha(\gamma)$.
%Thus $M_\alpha(\delta'\cdot e)=M_\alpha(\delta'\cdot s_r)=M_\alpha(\gamma)\ni r$. It follows that
%$\bbeta_r(\delta'\cdot e)=\pm\alpha$ and $\bbeta_r(\delta'\cdot s_r)=\pm\alpha$.
%It remains to note that $\bbeta_r(\delta'\cdot e)=-\bbeta_r(\delta'\cdot s_r)$.
%
Choosing in~(\ref{eq:17}) the gallery $\delta$ so that $r\notin J_\alpha(\delta)$, we can rewrite this equivalence as~(\ref{eq:18}).

{\it Case 3: $r\in J_\alpha(\gamma)$.} Consider the following equivalence relation on the set
$\{\delta\in\Gamma\suchthat\delta\sim_\alpha\gamma\}$: $\delta\=\tau\Leftrightarrow\delta'=\tau'$.
Clearly, every equivalence class of this relation consists of exactly to elements. Therefore the sum in~(\ref{eq:17})
can be broken into a sum of the following subsums:
$$
(-1)^{|J_\alpha(\delta)|}f'(\delta')+(-1)^{|J_\alpha(\tau)|}f'(\tau')
$$
for different $\delta\=\tau$. As $|J_\alpha(\delta)|$ and $|J_\alpha(\tau)|$ have different parities, the above sum equals zero.
\end{proof}

For $f'\in H^\bullet_T(\Gamma')$ and $t\in\{e,s_r\}$, we define
%$\nabla_e f,\nabla_{s_r}f,\widetilde\nabla_e f,\widetilde\nabla_{s_r}f\in H^\bullet_T(\Gamma)$
$\nabla_tf'\in H^\bullet_T(\Gamma)$ called the {\it concentration} of $f'$ at $t$ by
$$
\nabla_tf'(\gamma)=
\left\{
\begin{array}{ll}
\bbeta_r(\gamma)f'(\gamma')&\text{ if }\gamma_r=t;\\[6pt]
0&\text{ otherwise}.
\end{array}
\right.
$$
for any $\gamma\in\Gamma$. Clearly, $\nabla_t$ is an $S$-linear operation.

\begin{lemma}\label{lemma:6}
$\nabla_tf'\in\X$ if $f'\in\X'$.
\end{lemma}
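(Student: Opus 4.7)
The plan is to verify H\"arterich's criterion (Proposition~\ref{proposition:1}) for $\nabla_tf'$: for each $\gamma\in\Gamma$ and each positive root $\alpha$ we must show
\[
\sum_{\delta\sim_\alpha\gamma,\,J_\alpha(\delta)\subset J_\alpha(\gamma)}(-1)^{|J_\alpha(\delta)|}\nabla_tf'(\delta)\equiv0\pmod{\alpha^{|J_\alpha(\gamma)|}}.\qquad(\star)
\]
Since $\nabla_tf'(\delta)$ vanishes unless $\delta_r=t$ and equals $\bbeta_r(\delta)f'(\delta')$ there, the left-hand side of $(\star)$ collapses to a sum of $\bbeta_r(\delta)f'(\delta')$ over those $\delta$ with $\delta_r=t$. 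I would organize the verification around the same three-case split used in Lemma~\ref{lemma:5} for $\Delta$, supplemented by a preliminary calculation tracking $\bbeta_r(\delta)$ along $\sim_\alpha$-classes.

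\textbf{Preliminary calculation.} By induction on $k$ one checks that for any $\delta\sim_\alpha\gamma$ the element $w_k:=\delta^k(\gamma^k)^{-1}\in W$ satisfies $w_0=e$, equals $w_{k-1}s_\alpha$ at positions $k\in M_\alpha(\gamma)$ where $\delta_k\neq\gamma_k$, and equals $w_{k-1}$ otherwise. Hence $w_k\in\{e,s_\alpha\}$ and $\bbeta_r(\delta)-\bbeta_r(\gamma)\in\Z\alpha$, with the integer coefficient controlled purely by the parity of the number of disagreements in $M_\alpha(\gamma)\cap\{1,\dots,r-1\}$.

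\textbf{Case 1,} $r\notin M_\alpha(\gamma)$. Then $\delta_r=\gamma_r$ throughout the sum, so $(\star)$ vanishes termwise when $\gamma_r\neq t$. When $\gamma_r=t$, I would write $\bbeta_r(\delta)=\bbeta_r(\gamma)+\alpha\,m(\delta')$ and split $(\star)$ into a leading $\bbeta_r(\gamma)$-piece (handled by Proposition~\ref{proposition:1} for $f'\in\X'$, using $|J_\alpha(\gamma')|=|J_\alpha(\gamma)|$) and a correction piece already divisible by $\alpha$. The residual divisibility of the correction term by $\alpha^{|J_\alpha(\gamma')|-1}$ is obtained by reapplying Proposition~\ref{proposition:1} separately inside each of the two parity classes of $w_{r-1}$.

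\textbf{Case 2,} $r\in M_\alpha(\gamma)\setminus J_\alpha(\gamma)$. Here $\bbeta_r(\gamma)=-\alpha$, and $J_\alpha(\delta)\subset J_\alpha(\gamma)$ together with $r\in M_\alpha(\delta)$ forces $\bbeta_r(\delta)=-\alpha$ for every summand. The sum then factors as $-\alpha$ times a signed sum of $f'$-values over a subset of $\Gamma'$, and since $|J_\alpha(\gamma)|=|J_\alpha(\gamma')|$ in this case, the factored $\alpha$ turns Proposition~\ref{proposition:1} for $f'$ into the needed congruence.

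\textbf{Case 3,} $r\in J_\alpha(\gamma)$. Now $\bbeta_r(\gamma)=+\alpha$ and $|J_\alpha(\gamma)|=|J_\alpha(\gamma')|+1$. For each admissible $\delta'$ exactly one of the two extensions $\delta'\cdot e$, $\delta'\cdot s_r$ realizes $\bbeta_r=+\alpha$, so the surviving terms carry an explicit factor of $\pm\alpha$, accounting for one required power of $\alpha$. Pairing up the remaining contributions via flips at a distinguished position of $M_\alpha(\gamma')\cap J_\alpha(\gamma')$, and invoking Proposition~\ref{lemma:4} to match the signs $(-1)^{|J_\alpha(\delta)|}$ with the corresponding signs on truncations, rewrites the residue as a Proposition~\ref{proposition:1} congruence for $f'\in\X'$ modulo $\alpha^{|J_\alpha(\gamma')|}$.

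\textbf{Main obstacle.} The hard part will be Case~3, where the sign $(-1)^{|J_\alpha(\delta)|}$ must be reconciled both with the parity of $w_{r-1}(\delta)$ and with the sign conventions of Proposition~\ref{lemma:4}; Case~1's correction term presents a similar but milder difficulty. I expect the preliminary calculation together with Proposition~\ref{lemma:4} to express each subsum as (a combination of) left-hand sides of Proposition~\ref{proposition:1} applied to $f'$, at which point the hypothesis $f'\in\X'$ immediately closes the argument.
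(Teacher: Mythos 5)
Your overall strategy (verifying H\"arterich's criterion case by case, with the same three-way split as in Lemma~\ref{lemma:5}) matches the paper, but there is a structural gap in Cases~1 and~2 that cannot be repaired within the framework you describe: you never invoke Proposition~\ref{proposition:2} (the criterion for $\X^x$), yet it is essential there.

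Concretely, in your Case~2 you factor out $-\alpha$ and assert that Proposition~\ref{proposition:1} applied to $f'$ finishes the job. But after factoring, the residual sum runs only over those $\delta'$ lying in a \emph{single fiber} $\Gamma'_x$ (the condition $J_\alpha(\delta)\subset J_\alpha(\gamma)$ forces $\delta\in\Gamma_x$, hence $\delta'\in\Gamma'_x$). Proposition~\ref{proposition:1} for $f'\in\X'$ only controls the full sum over the whole $\sim_\alpha$-class of $\gamma'$, not this restricted partial sum, so it gives no bound at all. The paper handles this by noting that $f'|_{\Gamma'_x}\in(\X')^x$, converting the $J_\alpha$-inclusion to a $D_\alpha$-inclusion via Proposition~\ref{lemma:4}, using the inequality $|D_\alpha(\gamma')|\ge|J_\alpha(\gamma')|-1$, and applying Proposition~\ref{proposition:2}. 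The same issue appears in your Case~1 with the ``correction piece'': the correction term is again a fiber-restricted subsum, and your suggestion to ``reapply Proposition~\ref{proposition:1} separately inside each of the two parity classes'' is not a well-posed use of that proposition, since Proposition~\ref{proposition:1} does not apply to parity-class subsums. Again Proposition~\ref{proposition:2} is what is needed. (Your Case~3 is essentially sound: there the two fiber-subsums recombine into a full $\sim_\alpha$-class sum after the sign bookkeeping, and Proposition~\ref{proposition:1} for $f'$ alone does close it, just as in the paper.) Your preliminary calculation about $w_k$ reproves content of Proposition~\ref{lemma:4} but does not substitute for the missing appeal to Proposition~\ref{proposition:2}.
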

\begin{proof} We shall write down the proof for $\nabla_e f'$, the proof for $\nabla_{s_r}f'$ being similar.

By Proposition~\ref{proposition:1}, we must prove that
\begin{equation}\label{eq:19}
\sum_{\delta\in\Gamma,\delta\sim_\alpha\gamma,\delta_r=e,J_\alpha(\delta)\subset J_\alpha(\gamma)}(-1)^{|J_\alpha(\delta)|}\bbeta_r(\delta)f'(\delta')\=0\pmod{\alpha^{|J_\alpha(\gamma)|}}
\end{equation}
for any $\gamma\in\Gamma$ and $\alpha\in \Phi^+$. Clearly, it suffices to consider the case $J_\alpha(\gamma)\ne\emptyset$. We shall use the notation $M_\alpha(\gamma)=\{i_1<\cdots<i_\ll\}$ and $x=\pi(\gamma)$.

{\it Case 1: $r\notin M_\alpha(\gamma)$.} In this case, $\delta\sim_\alpha\gamma$ implies $\delta_r=\gamma_r$.
Thus it suffices to consider the case $\gamma_r=e$, as otherwise our sum is equal to zero.
We can rewrite~(\ref{eq:19}) as follows
\begin{multline*}
\sum_{\delta\in\Gamma_x,\delta\sim_\alpha\gamma,\delta_r=e,J_\alpha(\delta)\subset J_\alpha(\gamma)}(-1)^{|J_\alpha(\delta)|}x(-\alpha_r)f'(\delta')\\
+\sum_{\delta\in\Gamma_{s_\alpha x},\delta\sim_\alpha\gamma,\delta_r=e,J_\alpha(\delta)\subset J_\alpha(\gamma)}(-1)^{|J_\alpha(\delta)|}s_\alpha x(-\alpha_r)f'(\delta')\=0\pmod{\alpha^{|J_\alpha(\gamma)|}}.
\end{multline*}
As $s_\alpha x(-\alpha_r)\=x(-\alpha_r)\pmod\alpha$, it suffices to prove that
\begin{equation}\label{eq:20}
\sum_{\delta\in\Gamma,\delta\sim_\alpha\gamma,\delta_r=e,J_\alpha(\delta)\subset J_\alpha(\gamma)}(-1)^{|J_\alpha(\delta)|}f'(\delta')\=0\pmod{\alpha^{|J_\alpha(\gamma)|}}
\end{equation}
and
\begin{equation}\label{eq:21}
\sum_{\delta\in\Gamma_x,\delta\sim_\alpha\gamma,\delta_r=e,J_\alpha(\delta)\subset J_\alpha(\gamma)}(-1)^{|J_\alpha(\delta)|}f'(\delta')\=0\pmod{\alpha^{|J_\alpha(\gamma)|-1}}.
\end{equation}
We can rewrite~(\ref{eq:20}) as follows
$$
\sum_{\delta'\in\Gamma',\delta'\sim_\alpha\gamma',J_\alpha(\delta')\subset J_\alpha(\gamma')}(-1)^{|J_\alpha(\delta')|}f'(\delta')\=0\pmod{\alpha^{|J_\alpha(\gamma')|}}.
$$
It holds by Proposition~\ref{proposition:1} applied to $f'\in\X'$.
Noting that $J_\alpha(\delta)\subset J_\alpha(\gamma)$ is equivalent to $D_\alpha(\delta')\subset D_\alpha(\gamma')$ in~(\ref{eq:21})
by Proposition~\ref{lemma:4}, we can rewrite~(\ref{eq:21}) as follows
$$
\sum_{\delta'\in\Gamma_x,\delta'\sim_\alpha\gamma',D_\alpha(\delta')\subset D_\alpha(\gamma')}(-1)^{|J_\alpha(\delta')|}f'(\delta')\=0\pmod{\alpha^{|J_\alpha(\gamma)|-1}}.
$$
By Proposition~\ref{lemma:4}, we have
\begin{equation}\label{eq:23}
|D_\alpha(\gamma')|\ge|J_\alpha(\gamma')|-1=|J_\alpha(\gamma)|-1
\end{equation}
and
$|J_\alpha(\delta')|=|D_\alpha(\delta')|$ for $s_\alpha x>x$ and $|J_\alpha(\delta')|=|D_\alpha(\delta')|+1$ for $s_\alpha x<x$.
Therefore, the above equivalence follows from Proposition~\ref{proposition:2} applied to the element $f'\big|_{\Gamma'_x}$,
which belongs to ${\X'}^x$ as is easy to see from the commutative diagram
$$
\begin{tikzcd}
H^\bullet_T(\Sigma')\arrow{r}\arrow{d}&H^\bullet_T(\Sigma'_x)\arrow{d}\\
H^\bullet_T(\Gamma')\arrow{r}&H^\bullet_T(\Gamma'_x)
\end{tikzcd}
$$

{\it Case 2: $r\in M_\alpha(\gamma)\setminus J_\alpha(\gamma)$.} In this case, $i_\ll=r$, $|D_\alpha(\gamma)|=|J_\alpha(\gamma)|$
and $s_\alpha x>x$ by Proposition~\ref{lemma:4}. If $\delta$ belonged to $\Gamma_{s_\alpha x}$ in~(\ref{eq:19}),
we would get by Proposition~\ref{lemma:4}
that $r\in J_\alpha(\delta)$ and thus the inclusion $J_\alpha(\delta)\subset J_\alpha(\gamma)$ would not hold.
On the other hand, for any $\delta\in\Gamma_x$ such that $\delta\sim_\alpha\gamma$, we have
$r\in M_\alpha(\delta)\setminus J_\alpha(\delta)$, whence $\bbeta_r(\delta)=-\alpha$.
Therefore, it suffices to prove that
\begin{equation}\label{eq:22}
\sum_{\delta'\in\Gamma_x,\delta'\sim_\alpha\gamma',J_\alpha(\delta')\subset J_\alpha(\gamma')}(-1)^{|J_\alpha(\delta')|}f'(\delta')\=0\pmod{\alpha^{|J_\alpha(\gamma)|-1}}.
\end{equation}
If $\gamma'\in\Gamma_x$, then by Proposition~\ref{lemma:4} the summation runs over $\delta'\in\Gamma_x$ such that
$\delta'\sim_\alpha\gamma'$ and $D_\alpha(\delta')\subset D_\alpha(\gamma')$. Therefore, (\ref{eq:22}) follows
from~(\ref{eq:23}) and~Proposition~\ref{proposition:2} applied to $f'\big|_{\Gamma'_x}$.

We assume now that $\gamma'\in\Gamma_{xs_r}=\Gamma_{s_\alpha x}$. Note that $M_\alpha(\gamma')=\{i_1<\cdots<i_{\ll-1}\}$.
This set is not empty (i.e., $\ll>1$), as $s_\alpha\pi(\gamma')=x<s_\alpha x=\pi(\gamma')$, whence $i_{\ll-1}\in J_\alpha(\gamma')$.
Consider the gallery ${\widetilde\gamma}'$ that is obtained from $\gamma'$ by replacing $\gamma_{i_{\ll-1}}$ with
$\gamma_{i_{\ll-1}}s_{i_{\ll-1}}$. We clearly have
$$
{\widetilde\gamma}'\sim_\alpha\gamma',\qquad {\widetilde\gamma}'\in\Gamma_x,\qquad J_\alpha({\widetilde\gamma}')=J_\alpha(\gamma')\setminus\{i_{\ll-1}\},\qquad D_\alpha({\widetilde\gamma}')=D_\alpha(\gamma')
$$
Finally it remains to note that in~(\ref{eq:22}), we have $s_\alpha\pi(\delta')=s_\alpha x>x=\pi(\delta')$,
whence $i_{\ll-1}\notin J_\alpha(\delta')$. Thus $J_\alpha(\delta')\subset J_\alpha(\gamma')$ is equivalent
to $J_\alpha(\delta')\subset J_\alpha({\widetilde\gamma}')$ and hence by Proposition~\ref{lemma:4} to
$D_\alpha(\delta')\subset D_\alpha({\widetilde\gamma}')$. Thus we can rewrite~(\ref{eq:22}) as follows
$$
\sum_{\delta'\in\Gamma_x,\delta'\sim_\alpha{\widetilde\gamma}',D_\alpha(\delta')\subset D_\alpha({\widetilde\gamma}')}(-1)^{|J_\alpha(\delta')|}f'(\delta')\=0\pmod{\alpha^{|J_\alpha(\gamma)|-1}}.
$$
This equivalence again follows from~(\ref{eq:23}) and~Proposition~\ref{proposition:2} applied to $f'\big|_{\Gamma'_x}$.

{\it Case 3: $r\in J_\alpha(\gamma)$.} In this case, $i_\ll=r$ %$\#D_\alpha(\gamma)=\#J_\alpha(\gamma)-1$
and $s_\alpha x<x$. We can rewrite~(\ref{eq:19}) as follows
\begin{multline*}
\sum_{\delta'\in\Gamma_x,\delta'\sim_\alpha\gamma',J_\alpha(\delta')\subset J_\alpha(\gamma')}(-1)^{|J_\alpha(\delta')|+1}\alpha f'(\delta')\\[6pt]
-\sum_{\delta'\in\Gamma_{s_\alpha x},\delta'\sim_\alpha\gamma',J_\alpha(\delta')\subset J_\alpha(\gamma')}(-1)^{|J_\alpha(\delta')|}\alpha f'(\delta')\=0\pmod{\alpha^{|J_\alpha(\gamma)|}}.
\end{multline*}
It suffices to prove that
\begin{multline*}
\sum_{\delta'\in\Gamma_x,\delta'\sim_\alpha\gamma',J_\alpha(\delta')\subset J_\alpha(\gamma')}(-1)^{|J_\alpha(\delta')|}f'(\delta')\\[6pt]
+\sum_{\delta'\in\Gamma_{s_\alpha x},\delta'\sim_\alpha\gamma',J_\alpha(\delta')\subset J_\alpha(\gamma')}(-1)^{|J_\alpha(\delta')|}f'(\delta')\=0\pmod{\alpha^{|J_\alpha(\gamma)|-1}},
\end{multline*}
which follows from Proposition~\ref{proposition:1}, as  $|J_\alpha(\gamma)|-1=|J_\alpha(\gamma')|$.
\end{proof}

For notational purposes, its convenient to define
$$
\widetilde\nabla_t f'(\gamma)=
\left\{
\begin{array}{ll}
\widetilde\bbeta_r(\gamma)f'(\gamma')&\text{ if }\gamma_r=t;\\[6pt]
0&\text{ otherwise}
\end{array}
\right.
$$
\begin{corollary}\label{corollary:3}
$\widetilde\nabla_tf'\in\X$ if $f'\in\X'$
\end{corollary}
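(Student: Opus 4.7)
The plan is to reduce Corollary~\ref{corollary:3} immediately to Lemma~\ref{lemma:6} by observing that $\widetilde\nabla_t f'$ differs from $\nabla_t f'$ only by a global sign. Concretely, the two operators agree on the support condition $\gamma_r=t$, so it suffices to compare the prefactors $\widetilde\bbeta_r(\gamma)$ and $\bbeta_r(\gamma)$ when $\gamma_r=t$.

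Using the identity $\gamma^r=\gamma^{r-1}\gamma_r$, I would note that when $t=e$ one has $\gamma^r=\gamma^{r-1}$, hence $\bbeta_r(\gamma)=\gamma^{r-1}(-\alpha_r)=\widetilde\bbeta_r(\gamma)$; and when $t=s_r$ one has $\gamma^r=\gamma^{r-1}s_r$, so $\bbeta_r(\gamma)=\gamma^{r-1}s_r(-\alpha_r)=\gamma^{r-1}(\alpha_r)=-\widetilde\bbeta_r(\gamma)$. Combining these with the vanishing outside $\{\gamma_r=t\}$ yields the clean identity $\widetilde\nabla_e f'=\nabla_e f'$ and $\widetilde\nabla_{s_r}f'=-\nabla_{s_r}f'$.

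Since $\X$ is an $S$-submodule of $H^\bullet_T(\Gamma)$, it is closed under multiplication by $\pm 1$, so Lemma~\ref{lemma:6} applied to $\nabla_t f'$ immediately gives $\widetilde\nabla_t f'\in\X$ as well. There is no real obstacle here: all of the combinatorial work in Proposition~\ref{proposition:1} verification has already been carried out in the proof of Lemma~\ref{lemma:6}, and the only thing to be checked is the elementary calculation of $\widetilde\bbeta_r(\gamma)$ in terms of $\bbeta_r(\gamma)$ on the support of the operator.
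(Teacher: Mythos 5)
Your proposal is correct and takes the same route as the paper: both reduce the claim to Lemma~\ref{lemma:6} via the identities $\widetilde\nabla_e f'=\nabla_e f'$ and $\widetilde\nabla_{s_r}f'=-\nabla_{s_r}f'$, which you verify by the elementary computation of $\bbeta_r$ versus $\widetilde\bbeta_r$ on the support $\{\gamma_r=t\}$.
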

\begin{proof}
The result follows from $\widetilde\nabla_e f'=\nabla_e f'$ and $\widetilde\nabla_{s_r}f'=-\nabla_{s_r}f'$.
\end{proof}

\subsection{Folding the ends}\label{Folding the ends} For $r>0$, we define the automorphism $\gamma\mapsto\dot\gamma$ of $\Gamma$ by
$$
\dot\gamma_i=\left\{
\begin{array}{ll}
\gamma_i&\text{ if }i<r;\\
s_r\gamma_r&\text{ if }i=r.
\end{array}
\right.
$$
It satisfies the following properties:
\begin{itemize}
\item $M_\alpha(\dot\gamma)=M_\alpha(\gamma)$;\\[-8pt]
\item $\dot\delta\sim_\alpha\dot\gamma\Longleftrightarrow\delta\sim_\alpha\gamma$;\\[-8pt]
\item If $r\notin M_\alpha(\gamma)$, then $J_\alpha(\dot\gamma)=J_\alpha(\gamma)$. If $r\in M_\alpha(\gamma)$,
       then $J_\alpha(\dot\gamma)=J_\alpha(\gamma)\simd\{r\}$, where $\simd$ stands for the symmetric difference;\\[-8pt]
\item $D_\alpha(\dot\gamma)=D_\alpha(\gamma)$;\\[-8pt]
\item $\dot\gamma\in\Gamma_x\Leftrightarrow\gamma\in\Gamma_{xs_r}$,
\end{itemize}
whose proofs are left to the reader.

This automorphism of $\Gamma$ induces an automorphism of $H^\bullet_T(\Gamma)$ by $\dot f(\gamma)=f(\dot\gamma)$.
Clearly, these automorphisms are of order $2$.

\begin{lemma}\label{lemma:7}
%$[\X^\bullet]=\X^\bullet$,\;$[\X_x^\bullet]=\X_x^\bullet$,\;$[\bar\X_x^\bullet]=\bar\X_x^\bullet$.
$\dot\X=\X$,\; $\dot\X^x=\X^{xs_r}$.
\end{lemma}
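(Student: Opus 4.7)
The plan is to derive both equalities from H\"arterich's criterions (Propositions~\ref{proposition:1} and~\ref{proposition:2}) applied to the image function $\dot f$. Since $\gamma\mapsto\dot\gamma$ is an involution, it suffices to establish one containment in each case, and the other follows from the same argument applied to $xs_r$ in place of $x$ for the second identity.

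The identity $\dot\X^x=\X^{xs_r}$ is the easier half, and I handle it first. Starting from $f\in\X^x$, I apply Proposition~\ref{proposition:2} to $\dot f$ at a pair $(\gamma,\alpha)\in\Gamma_{xs_r}\times\Phi^+$ and substitute $\epsilon=\dot\delta$, $\eta=\dot\gamma$. By the listed properties of $\gamma\mapsto\dot\gamma$, we have $\delta\in\Gamma_{xs_r}\Leftrightarrow\epsilon\in\Gamma_x$, $\delta\sim_\alpha\gamma\Leftrightarrow\epsilon\sim_\alpha\eta$, $D_\alpha(\delta)=D_\alpha(\epsilon)$, and $D_\alpha(\gamma)=D_\alpha(\eta)$. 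The congruence therefore transports verbatim into the criterion for $f$ at $(\eta,\alpha)\in\Gamma_x\times\Phi^+$, which holds by hypothesis.

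The identity $\dot\X=\X$ requires more care because $J_\alpha$ is not invariant under $\gamma\mapsto\dot\gamma$. I apply Proposition~\ref{proposition:1} to $\dot f$ at $(\gamma,\alpha)$, perform the same substitution $\epsilon=\dot\delta$, and split into three cases. When $r\notin M_\alpha(\gamma)$, the sets $M_\alpha$, $J_\alpha$, and the $\sim_\alpha$-class are preserved, so the transformed congruence is simply the criterion for $f$ at $(\dot\gamma,\alpha)$. When $r\in J_\alpha(\gamma)$, I exploit $J_\alpha(\dot\epsilon)=J_\alpha(\epsilon)\simd\{r\}$ and $|J_\alpha(\dot\epsilon)|=|J_\alpha(\epsilon)|\pm 1$ to extract the uniform sign $(-1)^{|J_\alpha(\dot\epsilon)|}=-(-1)^{|J_\alpha(\epsilon)|}$, and verify that the condition $J_\alpha(\dot\epsilon)\subset J_\alpha(\gamma)$ is equivalent to $J_\alpha(\epsilon)\subset J_\alpha(\gamma)$ (this uses $r\in J_\alpha(\gamma)$); the criterion for $f$ at $(\gamma,\alpha)$ then closes the case.

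The main obstacle is the remaining case $r\in M_\alpha(\gamma)\setminus J_\alpha(\gamma)$, where the target modulus $\alpha^{|J_\alpha(\gamma)|}$ is strictly smaller than $\alpha^{|J_\alpha(\dot\gamma)|}$ (since $|J_\alpha(\dot\gamma)|=|J_\alpha(\gamma)|+1$) and the condition $J_\alpha(\dot\epsilon)\subset J_\alpha(\gamma)$ forces $r\in J_\alpha(\epsilon)$. The transformed sum therefore ranges over $S=\{\epsilon\sim_\alpha\gamma\suchthat r\in J_\alpha(\epsilon),\,J_\alpha(\epsilon)\subset J_\alpha(\dot\gamma)\}$. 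I resolve this by applying Proposition~\ref{proposition:1} to $f$ at \emph{both} $(\dot\gamma,\alpha)$ and $(\gamma,\alpha)$: the first gives $\sum_S+\sum_{S^c}\equiv 0\pmod{\alpha^{|J_\alpha(\dot\gamma)|}}$, where $S^c$ denotes the complementary set defined by $r\notin J_\alpha(\epsilon)$, and the second gives precisely $\sum_{S^c}\equiv 0\pmod{\alpha^{|J_\alpha(\gamma)|}}$; reducing the first modulo the weaker $\alpha^{|J_\alpha(\gamma)|}$ and subtracting isolates $\sum_S\equiv 0\pmod{\alpha^{|J_\alpha(\gamma)|}}$, which is exactly the required congruence for $\dot f$.
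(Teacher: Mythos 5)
Your proof is correct and follows essentially the same route as the paper: both check H\"arterich's congruences for $\dot f$ by the substitution $\epsilon=\dot\delta$ and split according to whether $r\notin M_\alpha(\gamma)$, $r\in M_\alpha(\gamma)\setminus J_\alpha(\gamma)$, or $r\in J_\alpha(\gamma)$, with the middle case handled by applying Proposition~\ref{proposition:1} to $f$ at both $\dot\gamma$ and $\gamma$ and subtracting. Your treatment of $\dot\X^x=\X^{xs_r}$ via Proposition~\ref{proposition:2} and $D_\alpha(\dot\gamma)=D_\alpha(\gamma)$ is slightly more explicit than the paper, which leaves that half to the reader after remarking that only $\dot\X\subset\X$ needs to be checked.
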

\begin{proof}
%We shall prove only the last equality, the first only being analogous (but simpler) and the second one being trivial.
Actually we only have to prove that $\dot\X\subset\X$. Take any $f\in\X$.
By Proposition~\ref{proposition:1}, we must check the equivalence
\begin{equation}\label{eq:24}
\sum_{\delta\in\Gamma,\delta\sim_\alpha\gamma,J_\alpha(\delta)\subset J_\alpha(\gamma)}(-1)^{|J_\alpha(\delta)|}f(\dot\delta)\=0\pmod{\alpha^{|J_\alpha(\gamma)|}}
\end{equation}
for arbitrary $\gamma\in\Gamma$ and $\alpha\in \Phi^+$.

{\it Case 1: $r\notin M_\alpha(\gamma)$.} In this case, we can rewrite~(\ref{eq:24}) as follows
$$
\sum_{\delta\in\Gamma,\dot\delta\sim_\alpha\dot\gamma,J_\alpha(\dot\delta)\subset J_\alpha(\dot\gamma)}(-1)^{|J_\alpha(\dot\delta)|}f(\dot\delta)\=0\pmod{\alpha^{|J_\alpha(\dot\gamma)|}}.
$$
It holds by Proposition~\ref{proposition:1}.

{\it Case 2: $r\in M_\alpha(\gamma)\setminus J_\alpha(\gamma)$.} In this case, $\gamma\sim_\alpha\dot\gamma$ and
$J_\alpha(\dot\gamma)=J_\alpha(\gamma)\sqcup\{r\}$. We can rewrite~(\ref{eq:24}) as follows\footnote{If $r\notin A$, then $B\subset A$ if and only if $r\in B\simd\{r\}$ and $B\simd\{r\}\subset A\simd\{r\}$.}
\begin{equation}\label{eq:25}
\sum_{\delta\in\Gamma,\dot\delta\sim_\alpha\dot\gamma,r\in J_\alpha(\dot\delta),J_\alpha(\dot\delta)\subset J_\alpha(\dot\gamma)}(-1)^{|J_\alpha(\dot\delta)|}f(\dot\delta)\=0\pmod{\alpha^{|J_\alpha(\gamma)|}}.
\end{equation}
To prove it, let us write the following two equivalences
$$
\sum_{\delta\in\Gamma,\dot\delta\sim_\alpha\dot\gamma,J_\alpha(\dot\delta)\subset J_\alpha(\dot\gamma)}(-1)^{|J_\alpha(\dot\delta)|}f(\dot\delta)\=0\pmod{\alpha^{|J_\alpha(\dot\gamma)|}},
$$
$$
\sum_{\delta\in\Gamma,\dot\delta\sim_\alpha\gamma,J_\alpha(\dot\delta)\subset J_\alpha(\gamma)}(-1)^{|J_\alpha(\dot\delta)|}f(\dot\delta)\=0\pmod{\alpha^{|J_\alpha(\gamma)|}},
$$
which hold by Proposition~\ref{proposition:1}. Subtracting the latter from the former and considering everything modulo $\alpha^{|J_\alpha(\gamma)|}$,
we get~(\ref{eq:25}).

{\it Case 3: $r\in J_\alpha(\gamma)$.} In this case, $\gamma\sim_\alpha\dot\gamma$ and we can rewrite~(\ref{eq:24})
as follows\footnote{If $r\in A$, then $B\subset A$ if and only if $B\simd\{r\}\subset A$.}
$$
-\sum_{\delta\in\Gamma,\dot\delta\sim_\alpha\gamma,J_\alpha(\dot\delta)\subset J_\alpha(\gamma)}(-1)^{|J_\alpha(\dot\delta)|}f(\dot\delta)\=0\pmod{\alpha^{|J_\alpha(\gamma)|}}.
$$
It holds by Proposition~\ref{proposition:1}.
\end{proof}

\subsection{Fixing the ends}\label{Fixing the ends} Let $r>0$. Consider the natural embedding $\iota:\Sigma'\hookrightarrow\Sigma$ defined by
$[p_1,\ldots,p_{r-1}]\mapsto[p_1,\ldots,p_{r-1},e]$. This is a $B$-equivariant hence also a $T$-equivariant embedding.
We get the following commutative diagram for restrictions:
$$
\begin{tikzcd}
H^\bullet_T(\Sigma)\arrow{d}\arrow{r}&H^\bullet_T(\Sigma')\arrow{d}\\
H^\bullet_T(\Gamma)\arrow{r}&H^\bullet_T(\Gamma')
\end{tikzcd}
$$
Let $f$ be an element of $\X$ (i.e., in the image of the left arrow).
It follows from the commutativity of the above diagram that the composition $f'=f\circ\iota$
belongs to $\X'$ (i.e., to the image of the right arrow).

\begin{lemma}\label{lemma:8}
Let $f\in\X$, $r>0$ and $t\in\{e,s_r\}$. We define $f'\in H^\bullet_T(\Gamma')$ by $f'(\gamma')=f(\gamma'\cdot t)$.
Then $f'\in\X'$.
\end{lemma}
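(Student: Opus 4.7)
The plan is to reduce to the case $t=e$, which is essentially the observation already made just before the lemma, by exploiting the folding automorphism of Section~\ref{Folding the ends}. For $t=e$, recall that the $T$-equivariant embedding $\iota:\Sigma'\hookrightarrow\Sigma$ given by $[p_1,\ldots,p_{r-1}]\mapsto[p_1,\ldots,p_{r-1},e]$ sends a $T$-fixed point $\gamma'\in\Gamma'$ to $\gamma'\cdot e\in\Gamma$, so under the commutative square of restrictions displayed right above the lemma, the composition $f\circ\iota$, viewed as a function on $\Gamma'$, is precisely $\gamma'\mapsto f(\gamma'\cdot e)$. Since $f\in\X$ means $f$ lies in the image of $H_T^\bullet(\Sigma)\to H_T^\bullet(\Gamma)$, chasing the diagram shows that $f'$ lies in the image of $H_T^\bullet(\Sigma')\to H_T^\bullet(\Gamma')$, i.e.\ $f'\in\X'$.

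For the case $t=s_r$, I would invoke Lemma~\ref{lemma:7}, which gives $\dot f\in\X$. Applying the case $t=e$ (just settled) to $\dot f$ in place of $f$, the function $\gamma'\mapsto\dot f(\gamma'\cdot e)$ belongs to $\X'$. By the definition of the folding automorphism, $\dot{(\gamma'\cdot e)}=\gamma'\cdot s_r$, so this function equals
\[
\gamma'\longmapsto f(\gamma'\cdot s_r)=f'(\gamma'),
\]
and hence $f'\in\X'$, as required.

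There is no real obstacle once Lemma~\ref{lemma:7} is in hand: the two permitted values of $t$ are exchanged by folding the last entry, and the heavier combinatorial work has already been absorbed into Lemmas~\ref{lemma:5}--\ref{lemma:7}. An alternative, more laborious route would be to verify H\"arterich's congruences of Proposition~\ref{proposition:1} for $f'$ directly by case analysis on whether $r\in M_\alpha(\gamma'\cdot t)$, but this duplicates effort already done and is unnecessary.
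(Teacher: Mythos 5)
Your proof is correct and follows exactly the same route as the paper's: handle $t=e$ by the commutative square above the lemma, then reduce $t=s_r$ to $t=e$ via the folding automorphism and Lemma~\ref{lemma:7}. The identity $\dot f\circ\iota(\gamma')=f(\gamma'\cdot s_r)$ is precisely the computation the paper records.
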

\begin{proof}
The argument preceding the formulation of this lemma proves the claim for $t=e$. Now let $t=s_r$.
By Lemma~\ref{lemma:7}, we get $\dot f\in\X$. Then by the case $t=e$, we get $\dot f\circ\iota\in\X'$.
Now the result follows from
$$
\dot f\circ\iota(\gamma')=\dot f(\gamma'\cdot e)=f(\gamma'\cdot s_r)=f'(\gamma').
$$
\end{proof}

\begin{lemma}\label{lemma:9}
Let $f\in\X$, $r>0$ and $t\in\{e,s_r\}$. Suppose that $f(\gamma)=0$ for $\gamma_r\ne t$.
Then $f(\gamma)$ is divisible in $S$ by $\bbeta_r(\gamma)$ for any $\gamma\in\Gamma$.
Moreover, the function $\gamma'\mapsto f(\gamma'\cdot t)/\bbeta_r(\gamma'\cdot t)$, where $\gamma'\in\Gamma'$, belongs to $\X'$.
\end{lemma}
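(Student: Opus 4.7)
The statement is the converse of the concentration construction (Lemma~\ref{lemma:6} and its twin Corollary~\ref{corollary:3}): Lemma~\ref{lemma:6} shows that the concentration map $\nabla_t\colon\X'\to\X$ lands in the subspace
$$
\X^{[t]}:=\{h\in\X\suchthat h(\gamma)=0\text{ for every }\gamma\in\Gamma\text{ with }\gamma_r\ne t\},
$$
and Lemma~\ref{lemma:9} claims that this map is a bijection onto $\X^{[t]}$, with explicit inverse $f\mapsto f'$, $f'(\gamma'):=f(\gamma'\cdot t)/\bbeta_r(\gamma'\cdot t)$. The two assertions of the lemma correspond respectively to (a) the values of this $f'$ lying in $S$ (the divisibility claim), and (b) $f'\in\X'$.

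For (a), only the case $\gamma_r=t$ requires work. Setting $\alpha:=|\bbeta_r(\gamma)|$, which is a positive root with $r\in M_\alpha(\gamma)$, I apply Proposition~\ref{proposition:1} to $f$ either at the gallery $\gamma$ itself (when $r\in J_\alpha(\gamma)$) or at $\dot\gamma$ (when $r\notin J_\alpha(\gamma)$; by the definition of the automorphism $\gamma\mapsto\dot\gamma$ of Section~\ref{Folding the ends}, this swap forces $r\in J_\alpha(\dot\gamma)$, and $f(\dot\gamma)=0$ by the support hypothesis, so $\gamma$ still appears with nontrivial coefficient in the resulting sum). The congruence produced has modulus at least $\alpha^1$; the terms with $\delta_r\ne t$ drop out by hypothesis, while the remaining terms are controlled by induction on $\gamma$ along the total order $\vartriangleleft$. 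The key observation is that for $\delta\sim_\alpha\gamma$ one has $\bbeta_r(\delta)\equiv\pm\bbeta_r(\gamma)\pmod\alpha$ (the $\pm$ reflecting the possible $s_\alpha$-twist of $\delta^r$ relative to $\gamma^r$), which lets the inductive values combine to isolate $\alpha\mid f(\gamma)$, hence $\bbeta_r(\gamma)\mid f(\gamma)$.

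For (b), once divisibility is in hand, I prefer the following indirect route. Define $g\in H^\bullet_T(\Gamma)$ by $g(\gamma):=f(\gamma)/\bbeta_r(\gamma)$ when $\gamma_r=t$ and $g(\gamma):=0$ otherwise. I verify $g\in\X$ via Proposition~\ref{proposition:1}; Lemma~\ref{lemma:8} applied to $g$ then immediately delivers $f'(\gamma')=g(\gamma'\cdot t)\in\X'$. The verification $g\in\X$ decomposes into three cases parallel to the proof of Lemma~\ref{lemma:6}: $r\notin M_\alpha(\gamma)$, $r\in M_\alpha(\gamma)\setminus J_\alpha(\gamma)$, and $r\in J_\alpha(\gamma)$. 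In each case one multiplies the desired congruence for $g$ through by $\bbeta_r(\gamma)$, invokes $\bbeta_r(\delta)\equiv\pm\bbeta_r(\gamma)\pmod\alpha$, pairs up $\delta\leftrightarrow\dot\delta$ so that only one member of each pair survives (by the support condition on $f$), and reduces to Proposition~\ref{proposition:1} for $f$, augmented by Proposition~\ref{proposition:2} applied to $f|_{\Gamma_x}$ exactly in the delicate sub-case (mirroring Case~3 of the proof of Lemma~\ref{lemma:6}).

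The main obstacle is reconciling the multiplication by $\bbeta_r(\gamma)$ against the demanded modulus $\alpha^{|J_\alpha(\gamma)|}$. The congruence $\bbeta_r(\delta)\equiv\bbeta_r(\gamma)\pmod\alpha$ is only modulo $\alpha$, whereas one needs divisibility by $\alpha^{|J_\alpha(\gamma)|}$; the resolution is a layered induction (on $|J_\alpha(\gamma)|$ in step~(b) and on $\vartriangleleft$ in step~(a)) that peels off one factor of $\alpha$ at a time by exploiting the pairings $\delta\leftrightarrow\dot\delta$ and the already-established divisibility of $f(\delta)$ by $\bbeta_r(\delta)$. Making the signs $(-1)^{|J_\alpha(\delta)|}$ behave coherently under this pairing --- and checking that the minimal surviving gallery in each Proposition~\ref{proposition:1} sum is indeed $\gamma$ itself --- is the technical heart of the argument.
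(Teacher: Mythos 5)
Your divisibility argument (part (a)) matches the paper's proof in structure: induction along $\vartriangleleft$, applying Proposition~\ref{proposition:1} at $\gamma$ when $r\in J_\alpha(\gamma)$ and at $\dot\gamma$ when $r\notin J_\alpha(\gamma)$, discarding the terms with $\delta_r\ne t$ via the support hypothesis.

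Part (b), however, has a genuine gap: the intermediate claim ``$g\in\X$'' is false. Already for $r=1$, $t=e$, take $f\in\X$ with $f(e)=\alpha_1$ and $f(s_1)=0$. This satisfies all hypotheses: $f$ vanishes off the $t$-fibre, $f(e)-f(s_1)=\alpha_1\equiv0\pmod{\alpha_1}$, and $f(e)=\alpha_1$ is divisible by $\bbeta_1(e)=-\alpha_1$. Then $g(e)=f(e)/\bbeta_1(e)=-1$ and $g(s_1)=0$, so $g(e)-g(s_1)=-1$ is not divisible by $\alpha_1$ and $g\notin\X$. Dividing by $\bbeta_r(\cdot)$ on the $t$-fibre and extending by zero destroys exactly the GKM congruence that Proposition~\ref{proposition:1} imposes across the pair $(\gamma,\dot\gamma)$ when $r\in M_\alpha(\gamma)$. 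The conclusion $f'\in\X'$ is of course still true (here $f'(\emptyset)=-1\in\X'=S$), but Lemma~\ref{lemma:8} cannot deliver it, since it requires a genuine element of $\X$ to restrict. The paper instead verifies Proposition~\ref{proposition:1} for $f'$ \emph{directly on $\Gamma'$}, with three cases according to $r\notin M_\alpha(\gamma)$, $r\in M_\alpha(\gamma)\setminus J_\alpha(\gamma)$, $r\in J_\alpha(\gamma)$ for $\gamma=\gamma'\cdot t$. The delicate one is the first case (not ``Case~3'' of Lemma~\ref{lemma:6}, which is Proposition~\ref{proposition:1} only): the two Weyl-group images $yt(-\alpha_r)$ and $s_\alpha yt(-\alpha_r)$ of $-\alpha_r$ coming from $\Gamma_{yt}$ and $\Gamma_{s_\alpha yt}$ differ by $c\alpha$, and the resulting error term is annihilated by Proposition~\ref{proposition:2} applied to $f|_{\Gamma_{yt}}$. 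You would need to replace your detour through $g$ with this direct verification on $\Gamma'$.
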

\begin{proof}
We shall prove the first claim by induction with respect to $\trianglelefteq$.
So suppose that $f(\delta)$ is divisible by $\bbeta_r(\delta)$ for any $\delta\vartriangleleft\gamma$.
We must prove that $f(\gamma)$ is divisible by $\bbeta_r(\gamma)$. Clearly, we need only to consider the case $\gamma_r=t$.

We take for $\alpha$ the positive of the two roots $\bbeta_r(\gamma)$ and $-\bbeta_r(\gamma)$.
Thus $r\in M_\alpha(\gamma)$.

{\it Case 1: $r\in J_\alpha(\gamma)$.} In this case $|J_\alpha(\gamma)|>0$. Thus by Proposition~\ref{proposition:1},
we get
$$
\sum_{\delta\in\Gamma,\delta\sim_\alpha\gamma,J_\alpha(\delta)\subset J_\alpha(\gamma)}(-1)^{|J_\alpha(\delta)|}f(\delta)\=0\pmod{\alpha}.
$$
As $\delta\sim_\alpha\gamma$ and $J_\alpha(\delta)\subset J_\alpha(\gamma)$ imply $\delta\trianglelefteq\gamma$,
the claim follows.

{\it Case 2: $r\notin J_\alpha(\gamma)$.} In this case, $r\in J_\alpha(\dot\gamma)$, whence $|J_\alpha(\dot\gamma)|>0$.
Moreover, $\dot\gamma\sim_\alpha\gamma$.
By Proposition~\ref{proposition:1}, we get
\begin{equation}\label{eq:27}
\sum_{\delta\in\Gamma,\delta\sim_\alpha\gamma,\delta_r=t,J_\alpha(\delta)\subset J_\alpha(\dot\gamma)}(-1)^{|J_\alpha(\delta)|}f(\delta)\=0\pmod{\alpha}.
\end{equation}
We claim that
\begin{equation}\label{eq:26}
\delta\sim_\alpha\gamma,\delta_r=t,J_\alpha(\delta)\subset J_\alpha(\dot\gamma)\Longrightarrow\delta\trianglelefteq\gamma.
\end{equation}
We get $\delta\vartriangleleft\dot\gamma$ as $\delta_r=t\ne\dot\gamma_r$. Thus there exists some $i_0$ such that
$\delta^{i_0}<\dot\gamma^{i_0}$ and $\delta_i=\gamma_i$ for $i<i_0$.
If $i_0<r$ then $\delta^{i_0}<\gamma^{i_0}$ and thus $\delta\vartriangleleft\gamma$.
On the other hand, if $i_0=r$, then $\gamma=\delta$, since $\delta_r=t=\gamma_r$.

Now it follows from~(\ref{eq:27}),~(\ref{eq:26}) and the inductive hypothesis that $f(\gamma)$ is divisible by $\alpha$.

Let us prove the second claim. We denote by $f'$ the function under consideration:
$f'(\gamma')=f(\gamma'\cdot t)/\bbeta_r(\gamma'\cdot t)$.
By Proposition~\ref{proposition:1}, we must check the equivalence
\begin{equation}\label{eq:31}
%\sum_{\delta'\in\Gamma',\delta'\sim_\alpha\gamma',J_\alpha(\delta')\subset J_\alpha(\gamma')}(-1)^{|J_\alpha(\delta')|}\frac{f(\delta't)}{\bbeta_r(\delta't)}\=0\pmod{\alpha^{|J_\alpha(\gamma')|}}
\sum_{\delta'\in\Gamma',\delta'\sim_\alpha\gamma',J_\alpha(\delta')\subset J_\alpha(\gamma')}(-1)^{|J_\alpha(\delta')|}f'(\delta')\=0\pmod{\alpha^{|J_\alpha(\gamma')|}}
\end{equation}
for any $\gamma'\in\Gamma'$ and $\alpha\in \Phi^+$. Clearly, we can assume that $J_\alpha(\gamma')\ne \emptyset$. We set $\gamma:=\gamma'\cdot t$. Let us fix the notation
$$
M_\alpha(\gamma)=\{i_1<\cdots<i_\ll\},\qquad y=\pi(\gamma').
$$
By Proposition~\ref{proposition:1}, we get
\begin{equation}\label{eq:28}
\sum_{\delta\in\Gamma,\delta\sim_\alpha\gamma,J_\alpha(\delta)\subset J_\alpha(\gamma)}(-1)^{|J_\alpha(\delta)|}f(\delta)\=0\pmod{\alpha^{|J_\alpha(\gamma)|}}.
\end{equation}

{\it Case~1: $r\notin M_\alpha(\gamma)$}. %In this case, $\delta\sim_\alpha\gamma$ implies $\delta_r=\gamma_r=t$.
We can rewrite~(\ref{eq:28}) as follows
\begin{multline*}
yt(-\alpha_r)\sum_{\delta\in\Gamma_{yt},\delta\sim_\alpha\gamma,\delta_r=t,J_\alpha(\delta)\subset J_\alpha(\gamma)}(-1)^{|J_\alpha(\delta)|}f'(\delta')\\[6pt]
+s_\alpha yt(-\alpha_r)\sum_{\delta\in\Gamma_{s_\alpha yt},\delta\sim_\alpha\gamma,\delta_r=t,J_\alpha(\delta)\subset J_\alpha(\gamma)}(-1)^{|J_\alpha(\delta)|}f'(\delta')
\=0\pmod{\alpha^{|J_\alpha(\gamma)|}}.
\end{multline*}
We have $s_\alpha yt(-\alpha_r)=yt(-\alpha_r)+c\alpha$ for some $c\in\Z$. Thus the above equivalence takes form
\begin{equation}\label{eq:29}
\begin{array}{l}
\displaystyle\hspace{-10pt} s_\alpha yt(-\alpha_r)\sum_{\delta'\in\Gamma',\delta'\sim_\alpha\gamma',J_\alpha(\delta')\subset J_\alpha(\gamma')}(-1)^{|J_\alpha(\delta')|}f'(\delta')\\[20pt]
\displaystyle \hspace{100pt}-c\alpha\sum_{\delta'\in\Gamma'_y,\delta'\sim_\alpha\gamma',J_\alpha(\delta')\subset J_\alpha(\gamma')}(-1)^{|J_\alpha(\delta')|}f'(\delta')\=0\pmod{\alpha^{|J_\alpha(\gamma')|}}.
\end{array}
\end{equation}
Our aim is to get rid of the second line. As the restriction $f|_{\Gamma_{yt}}$ belongs to $\X^{yt}$,
Proposition~\ref{proposition:2} implies that
$$
\sum_{\delta\in\Gamma_{yt},\delta\sim_\alpha\gamma,\delta_r=t,D_\alpha(\delta)\subset D_\alpha(\gamma)}(-1)^{|D_\alpha(\delta)|}f(\delta)\=0\pmod{\alpha^{|D_\alpha(\gamma)|}},
$$
which can be written as follows
$$
\pm yt(-\alpha_r)\sum_{\delta'\in\Gamma_y,\delta'\sim_\alpha\gamma',J_\alpha(\delta')\subset J_\alpha(\gamma')}(-1)^{|J_\alpha(\delta)|}f'(\delta')\=0\pmod{\alpha^{|D_\alpha(\gamma)|}},
$$
where $+$ is taken if $s_\alpha yt>yt$ and $-$ is taken otherwise (see Proposition~\ref{lemma:4}).
Moreover, $yt(-\alpha_r)=\gamma^r(-\alpha_r)$ is not proportional to $\alpha$ by the hypothesis of the current case.
Hence it follows from the above equivalence that
$$
c\alpha\sum_{\delta'\in\Gamma_y,\delta'\sim_\alpha\gamma',J_\alpha(\delta')\subset J_\alpha(\gamma')}(-1)^{|J_\alpha(\delta)|}f'(\delta')\=0\pmod{\alpha^{|D_\alpha(\gamma)|+1}}.
$$
It remains to note that $|J_\alpha(\gamma')|=|J_\alpha(\gamma)|\ge|D_\alpha(\gamma)|+1$, add the above equivalence to~(\ref{eq:29})
and note that $s_\alpha yt(-\alpha_r)$ is also not proportional to $\alpha$.

{\it Case 2: $r\in M_\alpha(\gamma)\setminus J_\alpha(\gamma)$.} In this case $i_\ll=r$, $\gamma\sim_\alpha\dot\gamma$ и
$J_\alpha(\dot\gamma)=J_\alpha(\gamma)\cup\{r\}$. % и $s_\alpha y>y$.
%Включение $J_\alpha(\delta)\subset J_\alpha(\gamma)$ было бы невозможно, если бы $\delta$ принадлежало
%$\Gamma_{s_\alpha y}$. Поэтому сравнение~(\ref{eq:18}) можно переписать в виде
By Proposition~\ref{proposition:1}, we get
$$
\sum_{\delta\in\Gamma,\delta\sim_\alpha\gamma,\delta_r=t,J_\alpha(\delta)\subset J_\alpha(\dot\gamma)}(-1)^{|J_\alpha(\delta)|}\bbeta_r(\delta)f'(\delta')\=0\pmod{\alpha^{|J_\alpha(\dot\gamma)|}}.
$$
As $r\in J_\alpha(\dot\gamma)$, this equivalence can be rewritten as follows
\begin{equation}\label{eq:30}
\sum_{\delta'\in\Gamma',\delta'\sim_\alpha\gamma',J_\alpha(\delta')\subset J_\alpha(\gamma')}(-1)^{|J_\alpha(\delta'\cdot t)|}\bbeta_r(\delta'\cdot t)f'(\delta')\=0\pmod{\alpha^{|J_\alpha(\gamma')|+1}}.
\end{equation}
Considering separately the cases $\delta'\in\Gamma'_y$ and $\delta'\in\Gamma'_{s_\alpha y}$, we get
$(-1)^{|J_\alpha(\delta'\cdot t)|}\bbeta_r(\delta'\cdot t)f'(\delta')=(-1)^{|J_\alpha(\delta')|}yt(-\alpha_r)$.
We know that $yt(-\alpha_r)=\bbeta_r(\gamma)=-\alpha$. Thus dividing~(\ref{eq:30}) by $-\alpha$,
we get~(\ref{eq:31}).

{\it Case 3: $r\in J_\alpha(\gamma)$.} In this case $i_\ll=r$ and~(\ref{eq:28}) can be rewritten as follows
\begin{equation}\label{eq:32}
\sum_{\delta'\in\Gamma',\delta'\sim_\alpha\gamma',J_\alpha(\delta')\subset J_\alpha(\gamma')}(-1)^{|J_\alpha(\delta'\cdot t)|}\bbeta_r(\delta'\cdot t)f'(\delta')\=0\pmod{\alpha^{|J_\alpha(\gamma')|+1}}.
\end{equation}
Considering separately the cases $\delta'\in\Gamma'_y$ and $\delta'\in\Gamma'_{s_\alpha y}$, we get
$(-1)^{|J_\alpha(\delta'\cdot t)|}\bbeta_r(\delta'\cdot t)f'(\delta')=-(-1)^{|J_\alpha(\delta')|}yt(-\alpha_r)$.
We know that $yt(-\alpha_r)=\bbeta_r(\gamma)=\alpha$. Thus dividing~(\ref{eq:32}) by $-\alpha$,
we get~(\ref{eq:31}).
\end{proof}

\subsection{Bases for $\X$}\label{Bases_for_X}
%Consider the binary tree $D_r$ equal to the set of all sequences (including the empty one)
%with entries $0$ or $1$ of length less than $r$. Note that $|D_r|=2^r-1$. Let $F_r$ denote the set of all functions $\rho$ on $D_r$
%such that $\rho(u)\in\{e,s_{r-|u|}\}$. Thus the set $F_0$ consists of the only empty function denoted by $\emptyset$.
%If $r>0$, then for any $\rho\in F_r$ and $\epsilon\in\{0,1\}$, we define the following function on $D_{r-1}$:
%$$
%\rho'_\epsilon(u')=\rho(u'\cdot\epsilon).
%$$
For any $\rho\in\Upsilon$, we construct the subset $B_\rho$ of $H^\bullet(\Sigma)$ inductively by
$$
B_\emptyset=\{1\},\qquad B_\rho=\Delta(B_{\rho'_0})\cup\nabla_{\rho_\emptyset}(B_{\rho'_1}).
$$
By Lemmas~\ref{lemma:5} and~\ref{lemma:6}, we get $B_\rho\subset\X$.

\begin{theorem}\label{theorem:4}
$B_\rho$ is an $S$-basis of $\X$.
\end{theorem}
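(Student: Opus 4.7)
The plan is to argue by induction on $r$. The base case $r=0$ is immediate: $\Sigma$ is a point, $\X=S$, and $B_\emptyset=\{1\}$ is manifestly an $S$-basis. For the inductive step, I would set $t^\ast$ to be the element of $\{e,s_r\}$ distinct from $\rho_\emptyset$ and study the $S$-linear map
$$
\Phi\colon \X'\oplus\X'\longrightarrow\X,\qquad (g,h)\mapsto\Delta(g)+\nabla_{\rho_\emptyset}(h),
$$
which is well defined by Lemmas~\ref{lemma:5} and~\ref{lemma:6}. The whole theorem reduces to showing $\Phi$ is an isomorphism of $S$-modules, for then the inductive hypothesis (that $B_{\rho'_0}$ and $B_{\rho'_1}$ are $S$-bases of $\X'$) transports the natural basis of $\X'\oplus\X'$ onto $\Delta(B_{\rho'_0})\cup\nabla_{\rho_\emptyset}(B_{\rho'_1})=B_\rho$.

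For surjectivity of $\Phi$, given $f\in\X$ I define $g\in H^\bullet_T(\Gamma')$ by $g(\gamma')=f(\gamma'\cdot t^\ast)$. By Lemma~\ref{lemma:8} applied with $t=t^\ast$, this $g$ lies in $\X'$. Then $\widetilde f:=f-\Delta(g)$ still belongs to $\X$, and by construction $\widetilde f(\gamma)=0$ whenever $\gamma_r=t^\ast$. This is precisely the hypothesis of Lemma~\ref{lemma:9} with $t=\rho_\emptyset$, which yields both the divisibility of $\widetilde f(\gamma)$ by $\bbeta_r(\gamma)$ in $S$ and the membership in $\X'$ of the function
$$
h(\gamma'):=\widetilde f(\gamma'\cdot\rho_\emptyset)\big/\bbeta_r(\gamma'\cdot\rho_\emptyset).
$$
By the very definition of $\nabla_{\rho_\emptyset}$, we then have $\widetilde f=\nabla_{\rho_\emptyset}(h)$, and hence $f=\Delta(g)+\nabla_{\rho_\emptyset}(h)=\Phi(g,h)$.

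For injectivity, suppose $\Delta(g)+\nabla_{\rho_\emptyset}(h)=0$. Evaluating on any $\gamma$ with $\gamma_r=t^\ast$, the $\nabla_{\rho_\emptyset}$ term vanishes and we read off $g(\gamma')=0$ for every $\gamma'\in\Gamma'$, hence $g=0$. The remaining relation $\nabla_{\rho_\emptyset}(h)=0$ evaluated at $\gamma=\gamma'\cdot\rho_\emptyset$ gives $\bbeta_r(\gamma)h(\gamma')=0$; since $\bbeta_r(\gamma)$ is a nonzero element of the integral domain $S$, this forces $h=0$. Combined with surjectivity, this establishes that $\Phi$ is an isomorphism, and the inductive step is complete.

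The only substantive obstacle is the divisibility step in the surjectivity argument: everything else (the verification that $g\in\X'$, the injectivity, and the transport of bases under $\Phi$) is formal bookkeeping once $\Phi$ is shown bijective. That key divisibility is exactly the content of Lemma~\ref{lemma:9}, which in turn rests on H\"arterich's congruences from Proposition~\ref{proposition:1}; thus all the real work has already been absorbed into the preparatory lemmas of Sections~\ref{Copy and concentration}--\ref{Fixing the ends}.
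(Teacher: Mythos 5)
Your proof is correct and follows essentially the same route as the paper: the decomposition $f=\Delta(g)+\nabla_{\rho_\emptyset}(h)$ with $g$ supplied by Lemma~\ref{lemma:8} (evaluated at $\gamma'\cdot\rho_\emptyset s_r$) and $h$ by Lemma~\ref{lemma:9}, together with linear independence read off by restricting first to $\Gamma'\cdot\rho_\emptyset s_r$ and then to $\Gamma'\cdot\rho_\emptyset$, is exactly the paper's argument. Your only difference is cosmetic packaging: you phrase the two halves as bijectivity of the map $\Phi\colon\X'\oplus\X'\to\X$, whereas the paper argues directly with an arbitrary $S$-linear relation among the elements of $B_\rho$; the two formulations are equivalent once $B_{\rho'_0}$, $B_{\rho'_1}$ are known to be bases of $\X'$.
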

\begin{proof} We apply induction on $r$. This result is clearly true for $r=0$.
Therefore, we assume that $r>0$ and that $B_{\rho'_0}$ and $B_{\rho'_1}$ are bases of $\X'$.

Let $f\in\X$. By Lemma~\ref{lemma:8}, the function $f'\in H^\bullet_T(\Gamma')$ defined by
$f'(\delta')=f(\delta'\cdot \rho_\emptyset s_r)$ belongs to $\X'$. We have
$$
\big(f-\Delta(f')\big)(\delta'\cdot \rho_\emptyset s_r)=f(\delta'\cdot \rho_\emptyset s_r)-\Delta(f')(\delta'\cdot \rho_\emptyset s_r)=f'(\delta')-f'(\delta')=0.
$$
for any $\delta'\in\Gamma'$. Let us define
$$
h'(\delta')=\frac{\big(f-\Delta(f')\big)(\delta'\cdot \rho_\emptyset )}{\bbeta_r(\delta'\cdot \rho_\emptyset )}.
$$
for any $\delta'\in\Gamma'$. By Lemma~\ref{lemma:9}, $h'$ is a well defined function of $\X'$. The above formulas show that
$f-\Delta(f')=\nabla_{\rho_\emptyset}(h')$, whence $f=\Delta(f')+\nabla_{\rho_\emptyset}(h')$.
By the inductive hypothesis and the linearity of $\Delta$ and $\nabla_{\rho_\emptyset}$, the function $f$ belongs to
the $S$-span of $B_\rho$.

It remains to prove the $S$-linear independence of elements of $B_\rho$. Let $B_{\rho'_0}=\{b^{(0)}_1,\ldots,b^{(0)}_{n_0}\}$
and $B_{\rho'_1}=\{b^{(1)}_1,\ldots,b^{(1)}_{n_1}\}$. Suppose that we have
$$
\sum_{i=1}^{n_0}\alpha^{(0)}_i\Delta(b^{(0)}_i)+\sum_{i=1}^{n_1}\alpha^{(1)}_i\nabla_{\rho_\emptyset}(b^{(1)}_i)=0
$$
for some $\alpha^{(0)}_i,\alpha^{(1)}_i\in S$. Consider the decomposition $\Gamma=\Gamma'\cdot\rho_\emptyset\sqcup\Gamma'\cdot\rho_\emptyset s_r$.
Restricting the above equality to $\Gamma'\cdot\rho_\emptyset s_r$, we get
$
\sum_{i=1}^{n_0}\alpha^{(0)}_ib^{(0)}_i=0.
$
Hence all $\alpha_i^{(0)}=0$ and $\sum_{i=1}^{n_1}\alpha^{(1)}_i\nabla_{\rho_\emptyset}(b^{(1)}_i)=0$.
Thus $\sum_{i=1}^{n_1}\alpha^{(1)}_i\bbeta_r(\delta)b^{(1)}_i(\delta')=0$ for any $\delta\in\Gamma'\cdot\rho_\emptyset s_r$.
Cancelling a nonzero element $\bbeta_r(\delta)$, we get $\sum_{i=1}^{n_1}\alpha^{(1)}_ib^{(1)}_i(\delta')=0$
for any $\delta'\in\Gamma'$. Hence all $\alpha^{(1)}_i=0$.
\end{proof}

\subsection{Basis for $\X^x$}
%For any gallery $\gamma\in\Gamma$, we define
%$$
%\aa(\gamma)=\prod_{i\in D(\gamma)}\widetilde\bbeta_i(\gamma).
%$$
For any gallery $\gamma\in\Gamma$, we define
$$
\aa(\gamma)=\prod_{i\in D(\gamma)}\widetilde\bbeta_i(\gamma)=\prod_{\alpha\in \Phi^+}\alpha^{|D_\alpha(\gamma)|},\quad
\b_\emptyset=1,\quad
\b_\gamma=\left\{
\begin{array}{ll}
\Delta(\b_{\gamma'})&\text{if }r\notin D(\gamma);\\[3pt]
\widetilde\nabla_{\gamma_r}(\b_{\gamma'})&\text{if }r\in D(\gamma).
\end{array}
\right.
$$
By Lemma~\ref{lemma:5} and Corollary~\ref{corollary:3}, we get $\b_\gamma\in\X$.

\begin{lemma}\label{lemma:10}
Let $\gamma\in\Gamma_x$. Then $\b_\gamma(\gamma)=\aa(\gamma)$ and $\b_\gamma(\delta)=0$ for any $\delta\in\Gamma_x$
such that $\delta<\gamma$.
\end{lemma}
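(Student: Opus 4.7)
The plan is to induct on $r$, using the recursive definition of $\b_\gamma$ in terms of $\b_{\gamma'}$. The base case $r=0$ is trivial: $\b_\emptyset(\emptyset)=1=\aa(\emptyset)$ (empty product) and no $\delta<\emptyset$ exists. For the inductive step, fix $\gamma\in\Gamma_x$ and split according to whether $r\in D(\gamma)$ or not. In both cases the key structural fact is that $\widetilde\bbeta_i(\gamma)=\widetilde\bbeta_i(\gamma')$ for $i<r$ and $D(\gamma)\cap\{1,\dots,r-1\}=D(\gamma')$, so $\aa(\gamma)=\aa(\gamma')$ if $r\notin D(\gamma)$ and $\aa(\gamma)=\widetilde\bbeta_r(\gamma)\,\aa(\gamma')$ if $r\in D(\gamma)$.

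Case A ($r\notin D(\gamma)$): Here $\b_\gamma=\Delta(\b_{\gamma'})$. Since $\gamma'\in\Gamma'_{\gamma^{r-1}}$, the inductive hypothesis gives $\b_\gamma(\gamma)=\b_{\gamma'}(\gamma')=\aa(\gamma')=\aa(\gamma)$. For the vanishing, I take $\delta\in\Gamma_x$ with $\delta<\gamma$ and the associated index $i$ from the definition of $<$. Since both $\delta^r=\gamma^r=x$, we have $i\le r-1$. The claim I would first verify is that $\delta_r=\gamma_r$: otherwise we must have $i=r-1$, and the two equalities $\delta^r=\gamma^r$ and $\delta_r\ne\gamma_r$ force $\delta^{r-1}=\gamma^{r-1}s_r$, so the strict inequality $\delta^{r-1}<\gamma^{r-1}$ gives $\gamma^{r-1}s_r<\gamma^{r-1}$, i.e.\ $r\in D(\gamma)$, contradicting the Case A hypothesis. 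Thus $\delta_r=\gamma_r$, which makes $\delta'\in\Gamma'_{\gamma^{r-1}}$ and $\delta'<\gamma'$ (the same index $i$ works in $\Gamma'$), so by induction $\b_\gamma(\delta)=\b_{\gamma'}(\delta')=0$.

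Case B ($r\in D(\gamma)$): Here $\b_\gamma=\widetilde\nabla_{\gamma_r}(\b_{\gamma'})$. The first claim follows directly: $\b_\gamma(\gamma)=\widetilde\bbeta_r(\gamma)\,\b_{\gamma'}(\gamma')=\widetilde\bbeta_r(\gamma)\,\aa(\gamma')=\aa(\gamma)$ by induction and the product identity noted above. For the vanishing, for $\delta\in\Gamma_x$ with $\delta<\gamma$, either $\delta_r\ne\gamma_r$, in which case $\widetilde\nabla_{\gamma_r}(\b_{\gamma'})(\delta)=0$ immediately from the definition of $\widetilde\nabla$; or $\delta_r=\gamma_r$, which together with $\delta^r=\gamma^r$ forces $\delta^{r-1}=\gamma^{r-1}$, so $\delta',\gamma'\in\Gamma'_{\gamma^{r-1}}$ with $\delta'<\gamma'$, and $\b_\gamma(\delta)=\widetilde\bbeta_r(\delta)\,\b_{\gamma'}(\delta')=0$ by induction.

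The only subtle point (and thus the one I would write out carefully) is the forced equality $\delta_r=\gamma_r$ in Case A: this is where the assumption $r\notin D(\gamma)$ is essential in order to reduce to a valid instance of the inductive hypothesis on $\Gamma'_{\gamma^{r-1}}$. Everything else is bookkeeping with the definitions of $\Delta$, $\widetilde\nabla_t$, $\aa(\gamma)$ and the order $<$.
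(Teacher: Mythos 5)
Your proof is correct and follows essentially the same strategy as the paper's: an induction on $r$ where the case $\delta_r=\gamma_r$ reduces cleanly to $\delta'<\gamma'$ in $\Gamma'_{\gamma^{r-1}}$, and the case $\delta_r\ne\gamma_r$ is shown (via $\delta^{r-1}=\gamma^{r-1}s_r<\gamma^{r-1}$) to force $r\in D(\gamma)$ so that $\widetilde\nabla_{\gamma_r}$ vanishes on $\delta$. The only cosmetic difference is that you organize the argument by first splitting on $r\in D(\gamma)$ while the paper splits first on $\delta_r=\gamma_r$; the underlying observations are identical.
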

\begin{proof}
The first formula follows directly from the definition of $\b_\gamma$ and $\widetilde\nabla_t$. Let us prove the second claim
inductively.
From $\delta<\gamma$ it follows that there exists some $i_0=1,\ldots,r-1$ such that $\delta^{i_0}<\gamma^{i_0}$ and
$\delta^i=\gamma^i$ for $i>i_0$. Clearly $\delta'<\gamma'$.
Assume that $\delta_r=\gamma_r$. Then $\delta',\gamma'\in\Gamma_{x\gamma_r}$.
Thus by induction $\b_\gamma(\delta)=\b_{\gamma'}(\delta')=0$ if $r\notin D(\gamma)$
and $\b_\gamma(\delta)=\widetilde\bbeta_r(\delta)\b_{\gamma'}(\delta')=0$ if $r\in D(\gamma)$.
Now assume on the contrary that $\delta_r\ne\gamma_r$. However $\delta^r=\gamma^r=x$. Hence $\delta^{r-1}\ne\gamma^{r-1}$.
This means that $i_0=r-1$ and $\gamma^{r-1}s_r=\delta^{r-1}<\gamma^{r-1}$. Hence $r\in D(\gamma)$.
Therefore $\b_\gamma(\delta)=\widetilde\nabla_{\gamma_r}(\b_{\gamma'})(\delta)=0$.
\end{proof}

%\begin{lemma}\label{lemma:11}
%$\prod_{\alpha\in R^+}\alpha^{|D_\alpha(\gamma)|}=\aa(\gamma)$ for any $\gamma\in\Gamma$.
%\end{lemma}
%\begin{proof}
%We apply induction on the length of $\gamma$. The result is true for $\gamma=\emptyset$ as the both sides are equal to $1$.
%
%Assume now that $\gamma\ne\emptyset$. Let $x=\pi(\gamma)$ and $\beta$ be the positive of the two roots
%$\pm\widetilde\bbeta_r(\gamma)$. Thus we have $r\in M_\beta(\gamma)$. If $r\notin D(\gamma)$, then by induction we have
%$$
%\prod_{\alpha\in R^+}\alpha^{|D_\alpha(\gamma)|}=\prod_{\alpha\in R^+}\alpha^{|D_\alpha(\gamma')|}=\aa(\gamma')=\aa(\gamma).
%$$
%If $r\in D(\gamma)$, then by induction we have
%$$
%\prod_{\alpha\in R^+}\alpha^{|D_\alpha(\gamma)|}=\beta^{D_\beta(\gamma)}\prod_{\alpha\in R^+\setminus\{\beta\}}\alpha^{|D_\alpha(\gamma')|}=
%\beta\prod_{\alpha\in R^+}\alpha^{|D_\alpha(\gamma')|}=\beta\aa(\gamma')=\aa(\gamma).
%$$
%\end{proof}

\begin{theorem}\label{theorem:2}
The set $\big\{\b_\gamma|_{\Gamma_x}\;\big|\;\gamma\in\Gamma_x\big\}$ is an $S$-basis of $\X^x$. In particular,
the restriction $\X\to\X^x$ is surjective.
\end{theorem}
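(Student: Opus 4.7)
The plan is to establish $S$-linear independence of $\{\b_\gamma|_{\Gamma_x}\}_{\gamma\in\Gamma_x}$ via the triangularity of Lemma~\ref{lemma:10}, and then to obtain spanning by a greedy inductive subtraction whose divisibility step is fed by Proposition~\ref{proposition:2}. The concluding surjectivity of $\X\to\X^x$ is then immediate, since each $\b_\gamma$ already lies in $\X$ by Lemma~\ref{lemma:5} and Corollary~\ref{corollary:3}.

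For linear independence, enumerate $\Gamma_x=\{\gamma_1<\gamma_2<\cdots<\gamma_N\}$ in increasing $<$-order (which is total on $\Gamma_x$). If $\sum_j\alpha_j\b_{\gamma_j}|_{\Gamma_x}=0$ with not all $\alpha_j=0$, pick the minimal $j_0$ with $\alpha_{j_0}\ne0$; by Lemma~\ref{lemma:10} all terms with $j>j_0$ vanish at $\gamma_{j_0}$, so the remaining equation is $\alpha_{j_0}\aa(\gamma_{j_0})=0$. Since $\aa(\gamma_{j_0})$ is a nonzero product of positive roots in the integral domain $S$, this forces $\alpha_{j_0}=0$, a contradiction.

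For spanning, given $f\in\X^x$ I construct coefficients $\alpha_k\in S$ recursively so that the residual $g_{k-1}:=f-\sum_{j<k}\alpha_j\b_{\gamma_j}|_{\Gamma_x}$ lies in $\X^x$ and vanishes on $\gamma_1,\dots,\gamma_{k-1}$; setting $\alpha_k:=g_{k-1}(\gamma_k)/\aa(\gamma_k)$ reduces the whole argument to verifying the divisibility $\aa(\gamma_k)\mid g_{k-1}(\gamma_k)$ in $S$. Because our restriction on the characteristic of $\Z'$ makes the distinct positive roots pairwise non-associate primes in the UFD $S$, this is equivalent to $\alpha^{|D_\alpha(\gamma_k)|}\mid g_{k-1}(\gamma_k)$ for every $\alpha\in\Phi^+$; Proposition~\ref{proposition:2} applied to $g_{k-1}$ supplies exactly
\[
\sum_{\delta\in\Gamma_x,\ \delta\sim_\alpha\gamma_k,\ D_\alpha(\delta)\subset D_\alpha(\gamma_k)}(-1)^{|D_\alpha(\delta)|}g_{k-1}(\delta)\equiv 0\pmod{\alpha^{|D_\alpha(\gamma_k)|}}.
\]

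The main obstacle is to show that every $\delta\ne\gamma_k$ appearing in this sum already satisfies $g_{k-1}(\delta)=0$; given the induction hypothesis, it is enough to establish that each such $\delta$ is strictly $<$-smaller than $\gamma_k$. By Proposition~\ref{lemma:4} (applicable since $\pi(\delta)=\pi(\gamma_k)=x$) the condition $D_\alpha(\delta)\subset D_\alpha(\gamma_k)$ is equivalent to $J_\alpha(\delta)\subset J_\alpha(\gamma_k)$, and the remark preceding Proposition~\ref{lemma:4} yields $\delta\trianglelefteq\gamma_k$; but the induction is indexed by $<$ and the two orders genuinely disagree on $\Gamma_x$ in general, so more is needed. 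The key combinatorial claim to close the argument is: if $\delta\in\Gamma_x$, $\delta\sim_\alpha\gamma_k$, $D_\alpha(\delta)\subset D_\alpha(\gamma_k)$ and $\delta\ne\gamma_k$, then $\delta<\gamma_k$. The proof traces the largest index $i_{j^\ast}\in M_\alpha(\gamma_k)$ at which $\delta$ and $\gamma_k$ disagree, uses $\pi(\delta)=\pi(\gamma_k)=x$ to force $\delta^{i_{j^\ast}}=\gamma_k^{i_{j^\ast}}$ yet $\delta^{i_{j^\ast}-1}=\gamma_k^{i_{j^\ast}-1}s_{i_{j^\ast}}$, and then exploits the relation between $\widetilde{\bbeta}_{i_{j^\ast}}$ and $\bbeta_{i_{j^\ast}}$ together with the constraint $J_\alpha(\delta)\subset J_\alpha(\gamma_k)$ controlled by Proposition~\ref{lemma:4} to pin down the Bruhat comparison. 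Once the claim is in hand, the Härterich sum collapses to $\pm g_{k-1}(\gamma_k)\equiv 0\pmod{\alpha^{|D_\alpha(\gamma_k)|}}$; iterating over $k$ yields $f=\sum_k\alpha_k\b_{\gamma_k}|_{\Gamma_x}$ and completes the proof.
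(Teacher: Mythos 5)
Your proof is correct and takes essentially the same route as the paper: triangularity from Lemma~\ref{lemma:10} gives linear independence, and a greedy triangular elimination with divisibility supplied by Proposition~\ref{proposition:2} gives spanning. Your iterative construction of coefficients $\alpha_k$ over $\gamma_1<\cdots<\gamma_N$ is just a reparametrization of the paper's induction on $|C(f)|$ (the upper closure of the support); the two are interchangeable.

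The one point where you are actually more careful than the paper is implication~(\ref{eq:39}): the paper writes $\delta\in\Gamma_x,\ \delta\sim_\alpha\gamma,\ D_\alpha(\delta)\subset D_\alpha(\gamma)\Rightarrow\delta\le\gamma$ as a bare assertion, whereas the remark preceding Proposition~\ref{lemma:4} only gives $\delta\trianglelefteq\gamma$, and $\vartriangleleft$ and $<$ need not coincide globally. You are right to flag this, and your sketch for the missing argument is sound and can be made airtight. Concretely: with $i^*$ the largest index where $\delta$ and $\gamma$ differ (necessarily $i^*\in M_\alpha(\gamma)$), reverse induction from $\delta^r=\gamma^r$ gives $\delta^i=\gamma^i$ for all $i\ge i^*$ and $\delta^{i^*-1}=\gamma^{i^*-1}s_{i^*}$; since $\delta^{i^*}=\gamma^{i^*}$ forces $\bbeta_{i^*}(\delta)=\bbeta_{i^*}(\gamma)$, the strictness of $J_\alpha(\delta)\subsetneq J_\alpha(\gamma)$ must occur at some index $i_{j_0}<i^*$ of $M_\alpha(\gamma)$ with $i_{j_0}\in J_\alpha(\gamma)\setminus J_\alpha(\delta)$; taking $j_0$ maximal with this property one has $i_{j_0+1}=i^*$, and Proposition~\ref{lemma:4} then gives $i^*\in D_\alpha(\gamma)$, i.e., $\gamma^{i^*-1}s_{i^*}<\gamma^{i^*-1}$, which is exactly $\delta<\gamma$. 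The only cosmetic gap in your write-up is that you should note explicitly that each $\b_{\gamma_j}|_{\Gamma_x}\in\X^x$ (this follows from $\b_{\gamma_j}\in\X$ and the commutative square relating the restrictions for $\Sigma\to\Sigma_x$ and $\Gamma\to\Gamma_x$), so that Proposition~\ref{proposition:2} is legitimately applicable to each residual $g_{k-1}$.
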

\begin{proof}
This set is $S$-linearly independent by Lemma~\ref{lemma:10}, as $<$ is the total order on $\Gamma_x$ and $\aa(\gamma)\ne0$.
Let us prove that any element $f\in\X^x$ is representable as an $S$-linear combination.
We apply induction on the cardinality of the set (the upper closure of the support of $f$)
$$
C(f)=\{\delta\in\Gamma_x\suchthat\text{ there exists }\gamma\in\Gamma_x\text{ such that }\delta\ge\gamma\text{ and }f(\gamma)\ne0\}.
$$
If $C(f)=\emptyset$, then $f=0$ and the result follows. Suppose now that $C(f)\ne\emptyset$ and let $\gamma$ be its
element minimal with respect to $<$. As
\begin{equation}\label{eq:39}
\delta\in\Gamma_x,\delta\sim_\alpha\gamma,D_\alpha(\delta)\subset D_\alpha(\gamma)\Longrightarrow\delta\le\gamma,
\end{equation}
Proposition~\ref{proposition:2} implies that $f(\gamma)$ is divisible by $\prod_{\alpha\in \Phi^+}\alpha^{|D_\alpha(\gamma)|}=\aa(\gamma)$. Consider the difference $h=f-f(\gamma)/\aa(\gamma)\b_\gamma$.
By Lemma~\ref{lemma:10}, we get $C(h)\subset\{\delta\in\Gamma_x\suchthat\delta>\gamma\}\varsubsetneq C(f)$.
By induction, $h$ belongs to the $S$-span of our set. Thus so does~$f$.
\end{proof}

\begin{corollary}\label{corollary:4}
The restrictions $H^\bullet_T(\Sigma)\to H^\bullet_T(\Sigma_x)$ and $H^\bullet(\Sigma)\to H^\bullet(\Sigma_x)$
are surjective.
\end{corollary}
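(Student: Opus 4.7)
The first assertion (for $T$-equivariant cohomology) is immediate from Theorem~\ref{theorem:2}. The plan is to invoke the commutative square whose top row is the equivariant restriction $H^\bullet_T(\Sigma)\to H^\bullet_T(\Sigma_x)$ and whose bottom row is the map $\X\to\X^x$ sending $f$ to $f|_{\Gamma_x}$, with vertical isomorphisms coming from the localization embeddings $H^\bullet_T(\Sigma)\stackrel{\sim}{\to}\X$ and $H^\bullet_T(\Sigma_x)\stackrel{\sim}{\to}\X^x$ supplied by Corollary~\ref{corollary:1}. Since Theorem~\ref{theorem:2} asserts that the bottom row is surjective, so is the top row.

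For the second assertion (ordinary cohomology), the plan is to reduce to the equivariant case by quotienting by the augmentation ideal $S^{>0}\subset S$. The isomorphism $H^\bullet_T(\Sigma)\cong H^\bullet(\Sigma)\otimes_{\Z'}S$ has already been established earlier via the degeneration of the Leray spectral sequence for $\Sigma$. The analogous isomorphism $H^\bullet_T(\Sigma_x)\cong H^\bullet(\Sigma_x)\otimes_{\Z'}S$ holds by the same principle, once one knows that $\Sigma_x$ is equivariantly formal; and this in turn follows directly from the freeness of $H^\bullet_T(\Sigma_x)=\X^x$ as an $S$-module, which is exactly Theorem~\ref{theorem:2}. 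Granting these two identifications, the forgetful maps $H^\bullet_T(\Sigma)\to H^\bullet(\Sigma)$ and $H^\bullet_T(\Sigma_x)\to H^\bullet(\Sigma_x)$ are both surjective (they correspond to reducing modulo $S^{>0}$), and a short diagram chase in the commutative square formed by these two forgetful maps and the two restriction maps converts the surjectivity of the equivariant restriction into surjectivity of the ordinary one.

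The only subtlety is the equivariant formality of $\Sigma_x$, but as noted it is already built into Theorem~\ref{theorem:2}, so no separate geometric input on $\Sigma_x$ (such as exhibiting an affine paving or applying Bia\l ynicki--Birula directly to the fiber) needs to be invoked.
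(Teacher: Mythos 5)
Your treatment of the first assertion is correct and matches the paper. Your treatment of the second assertion follows the paper's strategy (use equivariant formality of $\Sigma$ and $\Sigma_x$, then tensor down to ordinary cohomology / do the diagram chase), but there is a gap in one step.

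You claim that equivariant formality of $\Sigma_x$ \emph{follows} from freeness of $H^\bullet_T(\Sigma_x)\cong\X^x$ as an $S$-module, citing Theorem~\ref{theorem:2}. This is both unjustified and backwards. The implication ``$H^\bullet_T(X)$ free over $S_k$ $\Rightarrow$ equivariantly formal'' is not a formal tautology: over a field it is a theorem using the localization theorem (and a rank count in the Serre spectral sequence), and over a PID such as $\Z'$ it requires even more care; nothing of this kind is proved in the paper. More to the point, the paper never needs that direction. The isomorphism $H^\bullet_T(\Sigma_x,k)\simeq H^\bullet(\Sigma_x,k)\otimes_kS_k$ is established in the subsection on Bott--Samelson varieties (just after the statement of Corollary~\ref{corollary:1}), \emph{before} Theorem~\ref{theorem:2} is even formulated, by the same argument as for $\Sigma$: $H^\bullet(\Sigma_x,k)$ vanishes in odd degrees and is $k$-free, so the Leray spectral sequence degenerates at $E_2$. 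In fact this equivariant formality is a logical prerequisite for Theorem~\ref{theorem:2} (it is what lets one invoke Corollary~\ref{corollary:1} to define $\X^x$ in the first place), so deriving it from Theorem~\ref{theorem:2} would be circular. The correct citation at that point in your argument is the already-established spectral sequence degeneracy for $\Sigma_x$, not Theorem~\ref{theorem:2}. With that substitution, your tensor-down argument coincides with the paper's one-line justification that the ordinary restriction is obtained by applying $-\otimes_S\Z'$.
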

\begin{proof}
The surjectivity of the first morphism follows from Theorem~\ref{theorem:2}.
As $\Sigma$ and $\Sigma_x$ are equivariantly formal,
the second morphism is obtained from the first one by applying $?\otimes_S\Z'$ (where $S^i\Z'=0$ for $i>0$).
Hence it is also surjective.
\end{proof}

\begin{remark}\label{remark:1} \rm Let us define $\rho_r(x)\in\Upsilon$ as follows:
$$
x\rho_r(x)_\emptyset>x\rho_r(x)_\emptyset s_r,\quad \rho_r(x)'_0=\rho_{r-1}(x\rho_r(x)_\emptyset s_r),\quad \rho_r(x)'_1=\rho_{r-1}(x\rho_r(x)_\emptyset).
$$
It is elementary to check that up to sign all elements $\b_\gamma$ with $\gamma\in\Gamma_x$ belong to $B_{\rho_r(x)}$.
\end{remark}

Finally, we write down the exact inductive formula for the values of the basis functions
\begin{equation}\label{eq:40}
\b_\gamma(\delta)=\left\{
\begin{array}{ll}
\b_{\gamma'}(\delta')&\text{ if }r\notin D(\gamma);\\
(\delta')^{r-1}(-\alpha_r)\b_{\gamma'}(\delta')&\text{ if }r\in D(\gamma)\text{ and }\delta_r=\gamma_r;\\
0&\text{ if }r\in D(\gamma)\text{ and }\delta_r\ne\gamma_r.
\end{array}
\right.
\end{equation}

\section{Basis of the image ${\bar{\mathcal X}}^x$}\label{Basis_of_the_image_bar}

\subsection{Localization for $\bar\Sigma_x$}\label{Localization_for_Sigma_without_x} Let $k$ be a principal ideal domain
with invertible $2$ if the root system contains a component of type $C_n$.
As $\bar\Sigma_x$ is just a $T$-subspace of $\Sigma$, we can apply Theorem~\ref{theorem:1} to it as well. However, it is more
difficult to apply Corollary~\ref{corollary:1}.
Actually, the only problem to overcome is to prove that $H_T^\bullet(\bar\Sigma_x,k)$ is a free $S_k$-module.
Unfortunately, we can not solve this problem in the same way as for $\Sigma$: we do not know if $\bar\Sigma_x$
has an affine paving.

Consider the natural embeddings $i:\Sigma_x\hookrightarrow\Sigma$ and $j:\bar\Sigma_x\hookrightarrow\Sigma$.
From the non-equivariant distinguished triangle
\begin{equation}\label{eq:5}
j_!j^*\csh{k}{\Sigma}\to\csh{k}{\Sigma}\to i_*i^*\csh{k}{\Sigma}\stackrel{+1}\to,
\end{equation}
we get the following exact sequence
$$
H^{2m}(\Sigma,k)\to H^{2m}(\Sigma_x,k)\to H^{2m+1}(\Sigma,j_!\csh{k}{\bar\Sigma_x})\to H^{2m+1}(\Sigma,k)=0.
$$
The left morphism is surjective by Corollary~\ref{corollary:4} and the following corollary of the projection formula
(cf.~\cite[VI.5.1]{Iversen}):

\begin{proposition}\label{proposition:4}
Let $\Z'\to k$ be the natural ring homomorphism. For any topological space $X$,
we get the following exact sequence:
$$
0\to H_c^i(X)\otimes_{\Z'}k\to H_c^i(X,k)\to\Tor_1(H_c^{i+1}(X),k)\to0.
$$
\end{proposition}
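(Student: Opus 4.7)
The plan is to deduce this from the universal coefficient theorem applied to a cochain complex computing $H_c^\bullet(X)$. First, I would represent $R\Gamma_c(X,\csh{\Z'}{X})$ in the derived category $D(\Z'\text{-Mod})$ by a cochain complex $K^\bullet$ of free $\Z'$-modules; such a K-free resolution exists because $\Z'$ is a PID. Equivalently, one can start from a c-soft (for instance Godement) resolution of $\csh{\Z'}{X}$ and then replace the resulting complex of compactly supported sections by a quasi-isomorphic complex of free $\Z'$-modules.

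Next, I would apply the projection formula (cf.~\cite[VI.5.1]{Iversen}), applied to the map $X\to\pt$, to identify
$$
R\Gamma_c(X,\csh{k}{X})\cong R\Gamma_c(X,\csh{\Z'}{X})\otimes^{\mathbf L}_{\Z'}k\cong K^\bullet\otimes_{\Z'}k,
$$
where the second quasi-isomorphism uses that $K^\bullet$ is K-flat (being term-wise free). Consequently $H_c^i(X,k)=H^i(K^\bullet\otimes_{\Z'}k)$, while $H_c^i(X)=H^i(K^\bullet)$.

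Finally, I would invoke the classical universal coefficient short exact sequence for a cochain complex of free modules over a PID: for any such $K^\bullet$ and any $\Z'$-module $M$ there is a natural short exact sequence
$$
0\to H^i(K^\bullet)\otimes_{\Z'}M\to H^i(K^\bullet\otimes_{\Z'}M)\to\Tor_1^{\Z'}(H^{i+1}(K^\bullet),M)\to 0.
$$
This is obtained from the standard chain-complex universal coefficient theorem by reindexing $C_n:=K^{-n}$, which converts $\Tor_1(H_{n-1},M)$ into $\Tor_1(H^{i+1},M)$. Specializing to $M=k$ yields precisely the claimed sequence.

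The main technical point is the projection formula step, which requires the topological assumptions under which $Rp_!$ commutes with derived base change from $\Z'$ to $k$ in the sense of Iversen; once that is set up, the rest is purely homological algebra over a PID and is entirely formal.
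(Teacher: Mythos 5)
Your proof is correct and fills in exactly the derivation that the paper leaves implicit: the paper states Proposition~4 only as ``the following corollary of the projection formula (cf.~\cite[VI.5.1]{Iversen})'' with no details, and your route---represent $R\Gamma_c(X,\csh{\Z'}{X})$ by a K-flat complex of free $\Z'$-modules, use the projection formula for $X\to\pt$ (noting that $\csh{\Z'}{X}\otimes^{\mathbf L}_{\Z'}k\simeq\csh{k}{X}$ since the constant sheaf has flat stalks), then apply the algebraic universal coefficient sequence over the PID $\Z'$---is precisely what that citation is pointing to. The reindexing that produces $\Tor_1(H^{i+1}_c(X),k)$ rather than $\Tor_1(H^{i-1}_c(X),k)$ is also handled correctly.
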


\noindent
Hence $H^{2m+1}(\Sigma,j_!\csh{k}{\bar\Sigma_x})=0$.
Since $\Sigma$ is compact, we get
$$
0=H^{2m+1}(\Sigma,j_!\csh{k}{\bar\Sigma_x})=H_c^{2m+1}(\Sigma,j_!\csh{k}{\bar\Sigma_x})=H_c^{2m+1}(\bar\Sigma_x,k).
$$

The Poincar\'e duality in the form~\cite[Theorem 3.3.3]{Dimca} yields the following exact sequence:
$$
0{=}\Ext^1_{k\text{-mod}}(H^{2\dim\Sigma-2m+1}_c(\bar\Sigma_x,k),k)\to H^{2m}(\bar\Sigma_x,k)
\to\Hom_{k\text{-mod}}(H_c^{2\dim\Sigma-2m}(\bar\Sigma_x,k),k)\to0.
$$
Hence
\begin{equation}\label{eq:x6}
H^{2m}(\bar\Sigma_x,k)\simeq\Hom_{k\text{-mod}}(H_c^{2\dim\Sigma-2m}(\bar\Sigma_x,k),k).
\end{equation}
%The last module is always torsion free as a $\Z'$-module, since $\Z'$ does not have zero devisors.
%
From~(\ref{eq:5}), we get the following exact sequence
$$
0=H^{2m-1}(\Sigma_x,k)\to H^{2m}(\Sigma,j_!\csh{k}{\bar\Sigma_x})\to H^{2m}(\Sigma,k).
$$
The right cohomology is a finitely generated free $k$-module. We get that its submodule
$H^{2m}(\Sigma,j_!\csh{k}{\bar\Sigma_x})=H_c^{2m}(\bar\Sigma_x,k)$
is a finitely generated free $k$-module. Hence and from~(\ref{eq:x6}), it follows that
$H^{2m}(\bar\Sigma_x,k)$ is also a finitely generated free $k$-module.

Now again applying the Poincar\'e duality in the form~\cite[Theorem 3.3.3]{Dimca}, we get
$$
0{=}\Ext^1_{k\text{-mod}}(H^{2\dim\Sigma-2m}_c(\bar\Sigma_x,k),k){\to}H^{2m+1}(\bar\Sigma_x,k){\to}\Hom_{k\text{-mod}}(H_c^{2\dim\Sigma-2m-1}(\bar\Sigma_x,k),k){=}0.
$$
Hence we get $H^{2m+1}(\bar\Sigma_x,k)=0$.

Now the degeneracy of the Leray spectral sequence at the $E_2$-term implies
$$
H_T^\bullet(\bar\Sigma_x,k)\simeq H^\bullet(\bar\Sigma_x)\otimes_kS_k.
$$
This module is therefore a free $S_k$-module.

We denote by $\OX^x(k)$ the image of the restrictions $H_T^\bullet(\OSigma_x,k)\to H_T^\bullet(\OGamma_x,k)$,
which is injective by the first part of Corollary~\ref{corollary:1}.

\subsection{Review of H\"arterich's constructions}
We shall briefly sketch H\"arterich's construction, in order to be able to apply them to the cohomology of
the difference $\bar\Sigma_x$
%$\Sigma\setminus\pi^{-1}(x)$
in Section~\ref{DescriptionofbarX} and prove the criterion (Proposition~\ref{proposition:3}) for the image $\OX^x$
of the restriction $i_{\OSigma_x,\OGamma_x}^*:H_T^\bullet(\OSigma_x)\to H_T^\bullet(\OGamma_x)$.

Let $\alpha$ be a positive root and $\gamma\in\Gamma$. We set $T_\alpha:=\ker\alpha$ and $M_\alpha(\gamma)=\{i_1<\cdots<i_\ll\}$.
H\"arterich~\cite[Section 4]{Haerterich} constructs an embedding
$v_\gamma^\alpha:(G_\alpha/B_\alpha)^\ll\hookrightarrow\Sigma$
such that composition $\iota\circ v^\alpha_\gamma:(G_\alpha/B_\alpha)^\ll\to(G/B)^r$ is given by
%the set of $T_\alpha$-fixed points $\Sigma^{T_\alpha}$ is the disjoint union of all $\im(v_\gamma^\alpha)$,
%where $\gamma$ runs over the set of representatives of the $\sim_\alpha$-equivalence classes of $\Gamma$.
$$
(g_1,\ldots,g_\ll)\mapsto(\gamma_{\min}^1,\ldots \gamma_{\min}^{i_1-1},g_1\gamma_{\min}^{i_1},\ldots g_1\gamma_{\min}^{i_2-1},
\ldots,g_\ll\gamma_{\min}^{i_\ll},\ldots g_\ll\gamma_{\min}^r),
$$
where $\gamma_{\min}$ is the minimal with respect to $\vartriangleleft$ element in the $\sim_\alpha$-equivalence class of $\gamma$
(i.e. the unique element of this class having no load bearing $\alpha$-walls).
Here and in what follows we write $g$ instead of $gB_\alpha$ or $gB$ if it is clear from the context that we consider
an element of $G_\alpha/B_\alpha$ or $G/B$ respectively.
Note that for $\ll=0$, the map $v_\gamma^\alpha$ takes $(G_\alpha/B_\alpha)^\ll$ isomorphically to $\{\gamma\}$.

Clearly, $v^\alpha_\gamma$ depends only on the $\sim_\alpha$-equivalence class of $\gamma$.
Corollary 4.4 from~\cite{Haerterich} states
\begin{equation}\label{eq:10}
\Sigma^{T_\alpha}=\bigsqcup_{\gamma\in\rep(\Gamma,\sim_\alpha)}\im v^\alpha_\gamma.
\end{equation}
We note that to ensure the $T$-equivariance of $v_\gamma^\alpha$, we must define an appropriate $T$-action on
$(G_\alpha/B_\alpha)^\ll$. This can be done as follows:
$$
t\cdot(g_1,\ldots,g_\ll)=(tg_1t^{-1},\ldots,tg_\ll t^{-1}).
$$
Similarly the isomorphism $\iota:\Sigma^2_\ll\ito G_\alpha/B_\alpha$ becomes an isomorphism of $T$-spaces if we define the following
$T$-action on $\Sigma^2_\ll$:
$$
t\cdot[p_1,\ldots,p_\ll]:=[tp_1t^{-1},\ldots,tp_\ll t^{-1}].
$$

It is clear that $(\im v^\alpha_\gamma)^T=\{\delta\in\Gamma\suchthat\delta\sim_\alpha\gamma\}$. We can compute the preimage
of each point of the last set with respect to the map $v_\alpha^\gamma\circ\iota:\Sigma_\ll^2\to\Sigma$.
Indeed, without loss of generality, it suffices to compute $(v_\alpha^\gamma\circ\iota)^{-1}(\gamma)$.
Let us define
$$
g_j=\left\{
\begin{array}{ll}
s_\alpha&\text{ if }i_j\in J_\alpha(\gamma);\\[3pt]
e&\text{ otherwise}.
\end{array}\right.
$$
This definition and~\cite[Remark preceeding (4.1)]{Haerterich} ensures that
$v^\alpha_\gamma(g_1,\ldots,g_\ll)=\gamma$. We define $\bar g:=[g_1,g_1^{-1}g_2,\ldots,g_{\ll-1}^{-1}g_\ll]$ as an element of $(G_\alpha/B_\alpha)^\ll$.
Hence $v_\alpha^\gamma\circ\iota(\bar g)=\gamma$. The following equivalence
$$
j\in J(\bar g)\Leftrightarrow \bar g^js_\alpha<\bar g^j\Leftrightarrow g_j=\bar g^j=s_\alpha \Leftrightarrow i_j\in J_\alpha(\gamma)
$$
proves that
\begin{equation}\label{eq:11}
i_{J(\delta)}=J_\alpha(v_\gamma^\alpha\circ\iota(\delta))
\end{equation}
for any $\delta\in\Gamma^2_\ll$.

Now we are going to explain how to compute the intersection $\im v_\gamma^\alpha\cap\pi^{-1}(x)$.
Suppose that $\ll>0$ and $v^\alpha_\gamma(g_1,\ldots,g_\ll)\in\pi^{-1}(x)$.
Consider the following commutative diagram:
$$
\begin{tikzcd}
(G_\alpha/B_\alpha)^\ell\arrow{r}{v^\alpha_\gamma}&\Sigma\arrow{dr}[swap]{\pi}\arrow{r}{\iota}&(G/B)^r\arrow{d}{\pr_r}\\
&&G/B
\end{tikzcd}
$$
Recalling the definition of $\iota\circ v_\gamma^\alpha$ (see~H\"arterich~\cite[(4.1)]{Haerterich}), we get
\begin{equation}\label{eq:8}
g_\ll\gamma_{\min}^rB=xB.
\end{equation}
%Here $\gamma_{\min}$ is the unique gallery that is $\sim_\alpha$-equivalent to $\gamma$ and has no load bearing $\alpha$-walls.

If $g_\ll\in B_\alpha$, then~(\ref{eq:8}) together with the Bruhat decomposition yields $x=\gamma_{\min}^r$.
On the contrary, if the last equality holds, then~(\ref{eq:8}) is true for any $g_\ll\in B_\alpha$.

If $g_\ll\in U_\alpha s_\alpha B_\alpha$, then $g_\ll=x_\alpha(c)s_\alpha b$ for some $c\in\C$ and $b\in B_\alpha$.
Applying~\cite[(4.2)]{Haerterich}, we can rewrite~(\ref{eq:8}) as follows
\begin{equation}\label{eq:9}
g_\ll\gamma_{\min}^rB=x_\alpha(c)s_\alpha b\gamma_{\min}^rB=x_\alpha(c)s_\alpha\gamma_{\min}^rB.
\end{equation}
This representation is already canonical in the sense of~\cite[1.13]{Jantzen}, as
$$
(s_\alpha\gamma_{\min}^r)^{-1}(\alpha)=(\gamma_{\min}^r)^{-1}s_\alpha(\alpha)=(\gamma_{\min}^r)^{-1}(-\alpha)=-(\gamma_{\min}^r)^{-1}(\alpha)<0.
$$
Now comparing~(\ref{eq:9}) with~(\ref{eq:8}) by the Bruhat decomposition, we get $x=s_\alpha\gamma_{\min}^r$ and $c=0$.
This analysis proves the following formulas:
\begin{equation}\label{eq:41}
\im v_\gamma^\alpha\cap\pi^{-1}(x)=\left\{
\begin{array}{ll}
%\emptyset&\text{ if }x\notin\{\gamma_{\min}^r,s_\alpha\gamma_{\min}^r\}=\{\gamma^r,s_\alpha\gamma^r\};\\[6pt]
\emptyset&\text{ if }\gamma\notin\Gamma_{\{x,s_\alpha x\}};\\[6pt]
v^\alpha_\gamma((G_\alpha/B_\alpha)^{\ll-1}\times\{e\})&\text{ if }x=\pi(\gamma_{\min});\\[6pt]
v^\alpha_\gamma((G_\alpha/B_\alpha)^{\ll-1}\times\{s_\alpha\})&\text{ if }x=s_\alpha\pi(\gamma_{\min})
\end{array}
\right.
\end{equation}
if $\ll>0$ and
\begin{equation}\label{eq:45}
\im v_\gamma^\alpha\cap\pi^{-1}(x)=
\left\{
\begin{array}{ll}
\emptyset&\text{ if }\gamma\notin\Gamma_x;\\[6pt]
\{\gamma\}&\text{ if }\gamma\in\Gamma_x
\end{array}
\right.
\end{equation}
if $\ll=0$.

\subsection{Description of ${\bar{\mathcal X}}^x$}\label{DescriptionofbarX}
We will prove the following analog of Propositions~\ref{proposition:1} and~\ref{proposition:2}.

\begin{proposition}\label{proposition:3}
An element $f\in H_T^\bullet(\OGamma_x)$ belongs to the image $\OX^x$ of the restriction\linebreak
$i_{\OSigma_x,\OGamma_x}^*:H_T^\bullet(\OSigma_x)\to H_T^\bullet(\OGamma_x)$ if and only if
\begin{equation}\label{eq:42}
\sum_{\delta\in\Gamma,\delta\sim_\alpha\gamma,J_\alpha(\delta)\subset J_\alpha(\gamma)}(-1)^{|J_\alpha(\delta)|}f(\delta)\=0\pmod{\alpha^{|J_\alpha(\gamma)|}}
\end{equation}
for any positive root $\alpha$ and gallery $\gamma\in\OGamma_{\{x,s_\alpha x\}}$ and
\begin{equation}\label{eq:43}
\sum_{\delta\in\Gamma_{s_\alpha x},\delta\sim_\alpha\gamma,D_\alpha(\delta)\subset D_\alpha(\gamma)}(-1)^{|D_\alpha(\delta)|}f(\delta)\=0\pmod{\alpha^{|D_\alpha(\gamma)|}}
\end{equation}
for any positive root $\alpha$ and gallery $\gamma\in\Gamma_{s_\alpha x}$.
\end{proposition}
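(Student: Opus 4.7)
The plan is to follow the strategy of H\"arterich's proofs of Propositions~\ref{proposition:1} and~\ref{proposition:2}, adapted to the open subspace $\OSigma_x$. First, I would apply Corollary~\ref{corollary:1} to $\OSigma_x$; its hypotheses hold because Section~\ref{Localization_for_Sigma_without_x} established the freeness of $H_T^\bullet(\OSigma_x)$ as an $S$-module together with the applicability of Theorem~\ref{theorem:1}. This reduces membership in $\OX^x$ to the condition that, for every positive root $\alpha$, the function $f$ lies in the image of the restriction $H_T^\bullet(\OSigma_x^{T_\alpha}) \to H_T^\bullet(\OGamma_x)$.

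Second, I would exploit the decomposition $\Sigma^{T_\alpha} = \bigsqcup_{\gamma \in \rep(\Gamma, \sim_\alpha)} \im v^\alpha_\gamma$ from~(\ref{eq:10}), together with Corollary~\ref{corollary:2}, to check the image condition one $\sim_\alpha$-class at a time on $\OSigma_x^{T_\alpha} = \bigsqcup (\im v^\alpha_\gamma \cap \OSigma_x)$. For each representative $\gamma$ with $M_\alpha(\gamma) = \{i_1 < \cdots < i_\ell\}$, formulas~(\ref{eq:41}) and~(\ref{eq:45}) describe $\im v^\alpha_\gamma \cap \OSigma_x$ in three cases. If the $\sim_\alpha$-class is disjoint from $\Gamma_{\{x, s_\alpha x\}}$, then $\im v^\alpha_\gamma \subset \OSigma_x$ and H\"arterich's original analysis produces condition~(\ref{eq:42}) for any $\gamma$ in that class, which necessarily lies in $\OGamma_{\{x, s_\alpha x\}}$. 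If $\ell = 0$ and $\gamma \in \Gamma_x$, the intersection is empty and nothing is imposed. Otherwise the class meets $\Gamma_{\{x, s_\alpha x\}}$ nontrivially, $\ell > 0$, and one must remove a hyperslice $\{g_\ell = t\}$ from $(G_\alpha/B_\alpha)^\ell$, where $t = e$ if $\pi(\gamma_{\min}) = x$ and $t = s_\alpha$ if $s_\alpha \pi(\gamma_{\min}) = x$. This is the case that produces condition~(\ref{eq:43}).

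The crux is the third case. Via the $T$-equivariant identification $\iota: \Sigma^2_\ell \to (G_\alpha/B_\alpha)^\ell$, it amounts to characterizing the image of the restriction from the cohomology of $(G_\alpha/B_\alpha)^\ell$ with one hyperslice removed to the cohomology of its $T$-fixed points, a rank-one problem for the root system $\{\pm\alpha\}$. This I would handle by direct computation via Atiyah-Bott, as in the rank-one inputs to Propositions~\ref{proposition:1} and~\ref{proposition:2}. Transporting the resulting modular condition back via~(\ref{eq:11}) and Proposition~\ref{lemma:4}, the load-bearing indices $J(\delta)$ and descent indices $D(\delta)$ on $\Gamma^2_\ell$ correspond to $J_\alpha$ and $D_\alpha$ on $\Gamma$; the effect of excising the top fiber is to shift the $J_\alpha$-condition of Proposition~\ref{proposition:1} to the $D_\alpha$-condition of~(\ref{eq:43}), using the equivalence $i_j \in J_\alpha \Leftrightarrow i_{j+1} \in D_\alpha$ from Proposition~\ref{lemma:4}. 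The main obstacle will be this bookkeeping: tracking signs $(-1)^{|J_\alpha|}$ versus $(-1)^{|D_\alpha|}$, the drop by one in the modular exponent when the top wall is excised, and the disappearing weight-factor contributions at the $T$-fixed points $t$ and $s_\alpha t$ of the deleted fiber; it will also be necessary to verify that the two families of conditions~(\ref{eq:42}) and~(\ref{eq:43}) exhaust all $\sim_\alpha$-classes intersecting $\OGamma_x$, with the degenerate cases $s_\alpha x = x$ producing no spurious constraints.
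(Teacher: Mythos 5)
Your outline agrees with the paper's proof in every structural respect: applying Corollary~\ref{corollary:1} to $\OSigma_x$ (justified by the freeness of $H_T^\bullet(\OSigma_x)$ shown in Section~\ref{Localization_for_Sigma_without_x}), working $\sim_\alpha$-class by class via the decomposition~(\ref{eq:10}) and Corollary~\ref{corollary:2}, and case-splitting using~(\ref{eq:41}) and~(\ref{eq:45}). The one step you leave open — characterizing the image of the fixed-point restriction for $(G_\alpha/B_\alpha)^\ell$ with the hyperslice $\{g_\ell=e\}$ (resp.\ $\{g_\ell=s_\alpha\}$) removed — is handled in the paper not by a fresh Atiyah--Bott computation on the non-compact deleted space, but by the observation that $(G_\alpha/B_\alpha)\setminus\{e\}\cong\C$ is $T$-equivariantly contractible: the projection $p\colon Y\times\bigl((G_\alpha/B_\alpha)\setminus\{e\}\bigr)\to Y$ with $Y=(G_\alpha/B_\alpha)^{\ell-1}$ admits a $T$-equivariant section $a$ adjoining the fixed point $s_\alpha$, so $a^*$ is surjective and the fixed-point image coincides with that of the compact $Y$, reducing to H\"arterich's known rank-one criterion for $\Sigma^2_{\ell-1}$. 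The transport to~(\ref{eq:43}) then goes through the map $q$ appending $s_\alpha$ (resp.\ $e$), together with~(\ref{eq:11}) and Proposition~\ref{lemma:4}, which yields $D_\alpha(v^\alpha_\gamma\circ b\circ\iota(\delta))=i_{J(\delta)}$; note this is a direct index identification rather than the $i_j\leftrightarrow i_{j+1}$ shift you invoke, although the resulting drop of one in the modular exponent and the sign bookkeeping come out as you describe.
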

\begin{proof}
Subtracting $\pi^{-1}(x)$ from~(\ref{eq:10}), we get
$$
\OSigma_x^{T_\alpha}=\Sigma^{T_\alpha}\setminus\pi^{-1}(x)=\bigsqcup_{\gamma\in\rep(\Gamma,\sim_\alpha)}\im v^\alpha_\gamma\setminus\pi^{-1}(x).
$$
Note that $(\im v^\alpha_\gamma\setminus\pi^{-1}(x))^T=\{\delta\in\OGamma_x\suchthat\delta\sim_\alpha\gamma\}$.

By Corollary~\ref{corollary:2}, an element $f\in H^\bullet_T(\OGamma_x)$ belongs to the image of
$H^\bullet_T(\OSigma_x^{T_\alpha})\to H^\bullet_T(\OGamma_x)$ if and only if each restriction
$f^\gamma:=f|_{\{\delta\in\OGamma_x\suchthat\delta\sim_\alpha\gamma\}}$ belongs to the image of
%$H^\bullet_T(\im(v^\alpha_\gamma)\setminus\pi^{-1}(x))\to H^\bullet_T(\{\delta\in\OGamma_x\suchthat\delta\sim_\alpha\gamma\})$.
$H^\bullet_T(\im v^\alpha_\gamma\setminus\pi^{-1}(x))\to H^\bullet_T(\{\delta\in\OGamma_x\suchthat\delta\sim_\alpha\gamma\})$.
Clearly, it suffices to consider only the case $\{\delta\in\OGamma_x\suchthat\delta\sim_\alpha\gamma\}\ne\emptyset$.
We fix such a $\gamma\in\Gamma$ and consider the set $M_\alpha(\gamma)=\{i_1<\cdots<i_\ll\}$.

{\it Case~1: $\gamma\in\Gamma_{\{x,s_\alpha x\}}$}. By~(\ref{eq:41}) or~(\ref{eq:45}), we get that
$\im v^\alpha_\gamma\setminus\pi^{-1}(x)=\im v^\alpha_\gamma$
and\linebreak $\{\delta\in\OGamma_x\suchthat\delta\sim_\alpha\gamma\}=\{\delta\in\Gamma\suchthat\delta\sim_\alpha\gamma\}$.
We have the commutative diagram
$$
\begin{tikzcd}
\Sigma^2_\ll\arrow{r}{v_\gamma^\alpha\circ\iota}[swap]{\sim}&\im v^\alpha_\gamma&\\
\Gamma^2_\ll\arrow{r}{v_\gamma^\alpha\circ\iota}[swap]{\sim}\arrow[hook]{u}&(\im v^\alpha_\gamma)^T\arrow[hook]{u}\arrow[-, double equal sign distance]{r}&\{\delta\in\Gamma\suchthat\delta\sim_\alpha\gamma\}
\end{tikzcd}
$$

$$
\begin{tikzcd}
\Sigma^2_\ll\arrow{r}{\iota}[swap]{\sim}&(G_\alpha/B_\alpha)^\ll\arrow{r}{v_\gamma^\alpha}&\im v^\alpha_\gamma&\\
\Gamma^2_\ll\arrow{r}{\iota}[swap]{\sim}\arrow[hook]{u}&\{e,s_\alpha\}^\ll\arrow{r}{v_\gamma^\alpha}\arrow[hook]{u}&(\im v^\alpha_\gamma)^T\arrow[hook]{u}\arrow[-, double equal sign distance]{r}&\{\delta\in\Gamma\suchthat\delta\sim_\alpha\gamma\}
\end{tikzcd}
$$

Hence we get the following commutative diagram for cohomologies:
$$
\begin{tikzcd}[column sep=10ex]
H^\bullet_T(\Sigma^2_\ll)\arrow[hook]{d}&\arrow{l}[swap]{(v^\alpha_\gamma\circ\iota)^*}{\sim}H^\bullet_T(\im v^\alpha_\gamma)\arrow[hook]{d}\\
H^\bullet_T(\Gamma^2_\ll)&\arrow{l}[swap]{(v^\alpha_\gamma\circ\iota)^*}{\sim}H^\bullet_T(\{\delta\in\Gamma\suchthat\delta\sim_\alpha\gamma\})
\end{tikzcd}
$$
Thus $f^\gamma$ belongs to the image of the right vertical arrow if and only if $f\circ v^\alpha_\gamma\circ\iota$
belongs to the image of the left vertical arrow. By~\cite[Proposition 5.4(a)]{Haerterich} this is equivalent to
\begin{equation}\label{eq:12}
\sum_{\delta\in\Gamma^2_\ll,\;J(\delta)\subset J(\tau)}(-1)^{|J(\delta)|}f\circ v_\gamma^\alpha\circ\iota(\delta)\=0\pmod{\alpha^{|J(\tau)|}}
\end{equation}
for any $\tau\in\Gamma^2_\ll$.
By~(\ref{eq:11}), we have the following equivalences
$$
|J(\delta)|=|J_\alpha(v_\gamma^\alpha\circ\iota(\delta))|,\qquad J(\delta)\subset J(\tau)\Leftrightarrow J_\alpha(v_\gamma^\alpha\circ\iota(\delta))\subset J_\alpha(v_\gamma^\alpha\circ\iota(\tau)).
$$
So~(\ref{eq:12}) can be rewritten as follows
$$
\sum_{\delta\in\Gamma_\ll^2,\;J_\alpha(v_\gamma^\alpha\circ\iota(\delta))\subset J_\alpha(v_\gamma^\alpha\circ\iota(\tau))}(-1)^{|J_\alpha(v_\gamma^\alpha\circ\iota(\delta))|}f\circ v_\gamma^\alpha\circ\iota(\delta)\=0\pmod{\alpha^{|J_\alpha(v_\gamma^\alpha\circ\iota(\tau))|}}.
$$
Replacing $v_\gamma^\alpha\circ\iota(\delta)$ and $v_\gamma^\alpha\circ\iota(\tau)$ with $\delta$ and $\gamma$ respectively,
we get the final version~(\ref{eq:42}).

{\it Case~2: $x=\pi(\gamma_{\min})$.} The condition $\{\delta\in\OGamma_x\suchthat\delta\sim_\alpha\gamma\}\ne\emptyset$
implies $\ll>0$.
%This inequality allows us to assume without loss of generality that $\gamma\in\Gamma_{s_\alpha x}$.
Let $Y=(G_\alpha/B_\alpha)^{\ll-1}$ and $Z=\{e,s_\alpha\}^{\ll-1}$ be its set
of $T$-fixed points. By~(\ref{eq:41}), we get
$$
\im v^\alpha_\gamma\setminus\pi^{-1}(x)=v^\alpha_\gamma\big(Y\times((G_\alpha/B_\alpha)\setminus\{e\})\big),
$$
Consider the following maps:
\begin{itemize}
\item the natural inclusions $\tilde\imath:\Gamma_{\ll-1}^2\hookrightarrow\Sigma_{\ll-1}^2$, $i_{Y,Z}:Z \hookrightarrow Y$ and $\hat\imath:\{s_\alpha\}\hookrightarrow(G_\alpha/B_\alpha)\setminus\{e\}$;\\[-6pt]
\item $a:Y\to Y\times((G_\alpha/B_\alpha)\setminus\{e\})$ and $b:Z\to Z\times\{s_\alpha\}$ that add the point
      $s_\alpha$ to the last position;\\[-6pt]
\item the projection $p:Y\times ((G_\alpha/B_\alpha)\setminus\{e\})\to Y$ to the first $\ll-1$ coordinates.
\end{itemize}

By definition $p\circ a=\id$. Thus $a^*\circ p^*=\id$ on the level of cohomology. In particular,\linebreak $a^*$ is surjective.
We have the following commutative diagram:
$$
\begin{tikzcd}[column sep=4.5ex]
\Sigma_{\ll-1}^2\arrow{r}{\iota}[swap]{\sim}&Y\arrow{r}{a}&Y\times((G_\alpha/B_\alpha)\setminus\{e\})\arrow{r}{v_\gamma^\alpha}[swap]{\sim}&\im v^\alpha_\gamma\setminus\pi^{-1}(x)&\\
\Gamma_{\ll-1}^2\arrow[hook]{u}{\tilde\imath}\arrow{r}{\iota}[swap]{\sim}&Z\arrow[hook]{u}{i_{Y,Z}}\arrow{r}{b}[swap]{\sim}&Z\times\{s_\alpha\}\arrow{r}{v_\gamma^\alpha}[swap]{\sim}\arrow[hook]{u}{i_{Y,Z}\times\hat\imath}&(\im v^\alpha_\gamma\setminus\pi^{-1}(x))^T\arrow[hook]{u}\arrow[-, double equal sign distance]{r}&\{\delta\in\Gamma_{s_\alpha x}\suchthat\delta\sim_\alpha\gamma\}
\end{tikzcd}
$$
Hence we get the following commutative diagram for cohomologies:
$$
\begin{tikzcd}[column sep=5.5ex]
H^\bullet_T(\Sigma_{\ll-1}^2)\arrow{d}[swap]{\tilde\imath^*}&\arrow{l}[swap]{\iota^*}{\sim}H^\bullet_T(Y)\arrow{d}[swap]{i_{Y,Z}^*}&\arrow{l}[swap]{a^*}H^\bullet_T\big(Y\times((G_\alpha/B_\alpha)\setminus\{e\})\big)\arrow{d}[swap]{(i_{Y,Z}\times\hat\imath)^*}&\arrow{l}[swap]{(v_\gamma^\alpha)^*}{\sim}H^\bullet_T(\im v^\alpha_\gamma\setminus\pi^{-1}(x)\arrow{d})\\
H^\bullet_T(\Gamma_{\ll-1}^2)&\arrow{l}[swap]{\iota^*}{\sim}H^\bullet_T(Z)&\arrow{l}[swap]{b^*}{\sim}H^\bullet_T(Z\times\{s_\alpha\})&\arrow{l}[swap]{(v_\gamma^\alpha)^*}{\sim}H^\bullet_T(\{\delta\in\Gamma_{s_\alpha x}\suchthat\delta\sim_\alpha\gamma\})
\end{tikzcd}
$$
The surjectivity of $a^*$ proves that $b^*$ maps isomorphically $\im(i_{Y,Z}\times\hat\imath)^*$ onto $\im i_{Y,Z}^*$. Therefore the same is true about the
whole bottom line of the above diagram: the image of the rightmost vertical arrow is mapped isomorphically onto $\im\tilde\imath^*$.
Thus $f^\gamma$ belongs to the image of the rightmost vertical arrow if and only if
$f\circ v_\alpha^\gamma\circ b\circ\iota\in\im\tilde\imath^*$. By~\cite[Proposition 5.4(a)]{Haerterich}
this is equivalent to
\begin{equation}\label{eq:13}
\sum_{\delta\in\Gamma_{\ll-1}^2,\;J(\delta)\subset J(\tau)}(-1)^{|J(\delta)|}f\circ v^\alpha_\gamma\circ b\circ\iota(\delta)\=0\pmod{\alpha^{|J(\tau)|}}.
\end{equation}
for any $\tau\in\Gamma_{\ll-1}^2$.
%To understand this formula, we note that
%\begin{equation}\label{eq:14}
%\{v^\alpha_\gamma\circ b\circ\iota(\delta)\suchthat\delta\in\Gamma_{\ll-1}^2\}=\{\rho\in\Gamma_{s_\alpha x}\suchthat\rho\sim_\alpha\gamma\}.
%\end{equation}
%Indeed, for any $\delta\in\Gamma^2_{\ll-1}$, we get
%$$
%\pi\circ v^\alpha_\gamma\circ b\circ\iota(\delta)=\pi\circ v^\alpha_\gamma(\ldots,s_\alpha)=\pr_r\circ\iota\circ v^\alpha_\gamma(\ldots,s_\alpha)
%=s_\alpha\gamma_{\min}^rB=s_\alpha xB.
%$$
%Let now $\rho\in\Gamma_{s_\alpha x}$ be such that $\rho\sim_\alpha\gamma$. We have $\rho\in\im(v^\alpha_\gamma)^T$
%and thus $\rho=v^\alpha_\gamma\circ\iota(\xi)$ for some $\xi\in\Gamma^2_\ll$. Hence we get
%$$
%s_\alpha xB=\pi(\rho)=\pr_r\circ\iota\circ v^\alpha_\gamma\circ\iota(\xi)=\xi^\ll\gamma^r_{\min}B=\xi^\ll xB.
%$$
%Hence $\xi^\ll=s_\alpha$ and $\iota(\xi)=b\circ\iota(\xi')$. So we have $\rho=v^\alpha_\gamma\circ b\circ\iota(\xi')$.
Consider the commutative diagram
$$
\begin{tikzcd}
\Gamma_{\ll-1}^2\arrow{r}{\iota}\arrow{d}[swap]{q}&Z\arrow{d}{b}\\
\Gamma_\ll^2\arrow{r}[swap]{\iota}&Z\times\{s_\alpha\}
\end{tikzcd}
$$
where $q(\delta_1,\ldots,\delta_{\ll-1})=(\delta_1,\ldots,\delta_{\ll-1},\delta_{\ll-1}\cdots\delta_2\delta_1s_\alpha)$.
Note that $v^\alpha_\gamma\circ b\circ\iota(\delta)$ runs over the set
$\{\delta\in\Gamma_{s_\alpha x}\suchthat\delta\sim_\alpha\gamma\}$ as $\delta$ runs over $\Gamma_{\ll-1}^2$.
By~(\ref{eq:11}), we get
$$
J_\alpha(v^\alpha_\gamma\circ b\circ\iota(\delta))=J_\alpha(v^\alpha_\gamma\circ\iota\circ q(\delta))=i_{J(q(\delta))}=i_{J(\delta)}\sqcup\{i_\ll\}.
$$
Hence
$$
|D_\alpha(v^\alpha_\gamma\circ b\circ\iota(\delta))|=|J_\alpha(v^\alpha_\gamma\circ b\circ\iota(\delta))\setminus\{i_\ll\}|=|i_{J(\delta)}|=|J(\delta)|.
$$
By~(\ref{eq:11}) and Proposition~\ref{lemma:4}, the inclusion relation is also preserved:
$$
J(\delta)\subset J(\tau)\Leftrightarrow
J_\alpha(v^\alpha_\gamma\circ b\circ\iota(\delta))\subset J_\alpha(v^\alpha_\gamma\circ b\circ\iota(\tau))\Leftrightarrow
D_\alpha(v^\alpha_\gamma\circ b\circ\iota(\delta))\subset D_\alpha(v^\alpha_\gamma\circ b\circ\iota(\tau)).
$$

Now we can replace~(\ref{eq:13}) with
$$
\sum_{\delta\in\Gamma_{\ll-1}^2,\;D_\alpha(v^\alpha_\gamma\circ b\circ\iota(\delta))\subset D_\alpha(v^\alpha_\gamma\circ b\circ\iota(\tau))}(-1)^{|D_\alpha(v^\alpha_\gamma\circ b\circ\iota(\delta))|}f\circ v^\alpha_\gamma\circ b\circ\iota(\delta)\=0\pmod{\alpha^{|D_\alpha(v^\alpha_\gamma\circ b\circ\iota(\tau))|}}.
$$
%As $\{v^\alpha_\gamma\circ b\circ\iota(\delta)\suchthat\delta\in\Gamma_{\ll-1}^2\}=\{\rho\in\Gamma_{s_\alpha x}\suchthat\rho\sim_\alpha\gamma\}$ by the above diagram, we
Replacing $v^\alpha_\gamma\circ b\circ\iota(\delta)$ and $v^\alpha_\gamma\circ b\circ\iota(\tau)$
with $\delta$ and $\gamma$ respectively, we get~(\ref{eq:43}).

{\it Case~3: $x=s_\alpha\pi(\gamma_{\min})$.} For $\ll>0$, this case can be obtained from Case~2 by interchanging $e$ and $s_\alpha$.
We get the following version of~(\ref{eq:13}):
\begin{equation}\label{eq:15}
\sum_{\delta\in\Gamma_{\ll-1}^2,\;J(\delta)\subset J(\tau)}(-1)^{|J(\delta)|}f\circ v^\alpha_\gamma\circ b'\circ\iota(\delta)\=0\pmod{\alpha^{|J(\tau)|}}.
\end{equation}
where $b':Z\to Z\times\{e\}$ adds the point $e$ to the last position.
%In this case~(\ref{eq:14}), becomes\footnote{
%%
%Here how we should change our proofs
%$\pi\circ v^\alpha_\gamma\circ b'\circ\iota(\delta)=\pi\circ v^\alpha_\gamma(\ldots,e)=
%\pr_r\circ\iota\circ v^\alpha_\gamma(\ldots,e)
%=e\gamma_{\min}^rB=s_\alpha xB$;\quad
%%
%$s_\alpha xB=\pi(\rho)=\pr_r\circ\iota\circ v^\alpha_\gamma\circ\iota(\xi)=\xi^\ll\gamma^r_{\min}B=\xi^\ll s_\alpha xB$,
%whence $\xi^\ll=e$ and $\iota(\xi)=b'\circ\iota(\xi')$.
%%
%}
%$$
%\{v^\alpha_\gamma\circ b'\circ\iota(\delta)\suchthat\delta\in\Gamma_{\ll-1}^2\}=\{\rho\in\Gamma_{s_\alpha x}\suchthat\rho\sim_\alpha\gamma\}.
%$$
We have a similar commutative diagram
$$
\begin{tikzcd}
\Gamma_{\ll-1}^2\arrow{r}{\iota}\arrow{d}[swap]{q'}&Z\arrow{d}{b'}\\
\Gamma_\ll^2\arrow{r}[swap]{\iota}&Z\times\{e\}
\end{tikzcd}
$$
where $q'(\delta_1,\ldots,\delta_{\ll-1})=(\delta_1,\ldots,\delta_{\ll-1},\delta_{\ll-1}\cdots\delta_2\delta_1)$.
Here again $v^\alpha_\gamma\circ b\circ\iota(\delta)$ runs over the set
$\{\delta\in\Gamma_{s_\alpha x}\suchthat\delta\sim_\alpha\gamma\}$ as $\delta$ runs over $\Gamma_{\ll-1}^2$.
By~(\ref{eq:11}), we get
$$
J_\alpha(v^\alpha_\gamma\circ b'\circ\iota(\delta))=J_\alpha(v^\alpha_\gamma\circ\iota\circ q'(\delta))=i_{J(q'(\delta))}=i_{J(\delta)}.
$$
Hence
$$
|D_\alpha(v^\alpha_\gamma\circ b'\circ\iota(\delta))|=|J_\alpha(v^\alpha_\gamma\circ b'\circ\iota(\delta))\setminus\{i_\ll\}|=|i_{J(\delta)}|=|J(\delta)|
$$
and the same is true for the inclusion
$$
J(\delta)\subset J(\tau)\Leftrightarrow
J_\alpha(v^\alpha_\gamma\circ b'\circ\iota(\delta))\subset J_\alpha(v^\alpha_\gamma\circ b'\circ\iota(\tau))\Leftrightarrow
D_\alpha(v^\alpha_\gamma\circ b'\circ\iota(\delta))\subset D_\alpha(v^\alpha_\gamma\circ b'\circ\iota(\tau)).
$$
Therefore, we again get~(\ref{eq:43}).

Finally, assume that $\ll=0$. In this case, $\gamma=\gamma_{\min}$ and $\im v^\alpha_\gamma\setminus\pi^{-1}(x)=\{\gamma\}$.
Hence the restriction $H^\bullet_T(\im v^\alpha_\gamma\setminus\pi^{-1}(x))\to H^\bullet_T(\{\delta\in\OGamma_x\suchthat\delta\sim_\alpha\gamma\})$
is the identity map. Thus $f^\gamma$ is always in its image. On the other hand, condition~(\ref{eq:43}) is also satisfied,
as $D_\alpha(\gamma)=\emptyset$.

%After all the cases have been considered, it suffices to apply Corollary~\ref{corollary:1} in order to conclude the proof.

All the cases being considered, it suffices to apply Corollary~\ref{corollary:1} in order to conclude the proof.
\end{proof}

%\section{The costalk-stalk embedding for a T-fixed point} %H_T^{2m}(\Sigma,k)\to H_T^{2m}(\Sigma\setminus\pi^{-1}(x),k)

\subsection{Versions of Lemmas~\ref{lemma:7},~\ref{lemma:8}, and~\ref{lemma:9} for $\bar\Sigma_x$}
First, we get the following results similar to the first two lemmas.

\begin{lemma}[cf. Lemma~\ref{lemma:7}]\label{lemma:13}
%$[\X^\bullet]=\X^\bullet$,\;$[\X_x^\bullet]=\X_x^\bullet$,\;$[\bar\X_x^\bullet]=\bar\X_x^\bullet$.
$\dot{{\bar\X}}^x=\bar\X^{xs_r}$.
\end{lemma}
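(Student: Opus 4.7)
The plan is to mirror the proof of Lemma~\ref{lemma:7} but use the refined criterion of Proposition~\ref{proposition:3} in place of Proposition~\ref{proposition:1}. Since the map $\gamma\mapsto\dot\gamma$ is an involution on $\Gamma$ sending $\Gamma_x$ bijectively to $\Gamma_{xs_r}$ (hence $\bar\Gamma_x$ to $\bar\Gamma_{xs_r}$), the induced map $f\mapsto\dot f$ on functions is also of order~$2$. Consequently it suffices to prove the single inclusion $\dot{\bar\X}^x\subset\bar\X^{xs_r}$: pick any $f\in\bar\X^x$ and verify both congruences (\ref{eq:42}) and (\ref{eq:43}) of Proposition~\ref{proposition:3} for $\dot f$ at an arbitrary positive root $\alpha$ and admissible gallery.

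Condition (\ref{eq:43}) is the easy one. For $\gamma\in\Gamma_{s_\alpha xs_r}$, the substitution $\epsilon=\dot\delta$ moves the sum over $\delta\in\Gamma_{s_\alpha xs_r}$ to a sum over $\epsilon\in\Gamma_{s_\alpha x}$ with $\epsilon\sim_\alpha\dot\gamma$, and by the invariance properties listed in Section~\ref{Folding the ends} one has $D_\alpha(\epsilon)=D_\alpha(\delta)$ throughout, so the sum becomes (\ref{eq:43}) for $f$ at $\dot\gamma\in\Gamma_{s_\alpha x}$, modulo $\alpha^{|D_\alpha(\gamma)|}=\alpha^{|D_\alpha(\dot\gamma)|}$. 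This is provided by the hypothesis $f\in\bar\X^x$.

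For condition (\ref{eq:42}), let $\gamma\in\bar\Gamma_{\{xs_r,s_\alpha xs_r\}}$ and break into the same three cases as in Lemma~\ref{lemma:7}. In Case~1, $r\notin M_\alpha(\gamma)$, the equality $J_\alpha(\dot\delta)=J_\alpha(\delta)$ holds throughout the $\sim_\alpha$-class, and the change of variable $\epsilon=\dot\delta$ rewrites the sum as (\ref{eq:42}) for $f$ at $\dot\gamma\in\bar\Gamma_{\{x,s_\alpha x\}}$. In Case~2, $r\in M_\alpha(\gamma)\setminus J_\alpha(\gamma)$, we have $\gamma\sim_\alpha\dot\gamma$ with $J_\alpha(\dot\gamma)=J_\alpha(\gamma)\sqcup\{r\}$, and the identity $\bbeta_r(\gamma)=\pm\alpha$ is equivalent to $\pi(\gamma)s_r=s_\alpha\pi(\gamma)$, which forces $\pi(\gamma)\notin\{x,s_\alpha x\}$; hence \emph{both} $\gamma$ and $\dot\gamma$ lie in $\bar\Gamma_{\{x,s_\alpha x\}}$, so (\ref{eq:42}) for $f$ is available at each. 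Subtracting those two congruences and reducing modulo $\alpha^{|J_\alpha(\gamma)|}$ isolates precisely the terms indexed by $\delta$ with $r\in J_\alpha(\delta)$, producing the desired congruence, exactly as in the corresponding case of Lemma~\ref{lemma:7}. In Case~3, $r\in J_\alpha(\gamma)$, one again has $\gamma\sim_\alpha\dot\gamma$, and the flip changes the sum only by a global sign, so (\ref{eq:42}) for $f$ at $\gamma$ gives what we need (after checking, as in Case~2, that $\gamma\in\bar\Gamma_{\{x,s_\alpha x\}}$).

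The main obstacle is Case~2 of condition (\ref{eq:42}): one must simultaneously justify the applicability of Proposition~\ref{proposition:3} at both $\gamma$ and $\dot\gamma$ (i.e.\ verify that each lies in $\bar\Gamma_{\{x,s_\alpha x\}}$, not merely in $\bar\Gamma_x$) and carefully track the symmetric-difference identity $J_\alpha(\dot\delta)=J_\alpha(\delta)\triangle\{r\}$ so that the footnote-type set-inclusion manipulations used in Lemma~\ref{lemma:7} transfer verbatim. Everything else is a formal rewriting using the invariance properties of $\dot{}$ listed in Section~\ref{Folding the ends}.
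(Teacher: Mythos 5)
Your proof is correct and follows essentially the same route as the paper's: reduce to the three cases of Lemma~\ref{lemma:7}, use the invariance properties of $\dot{}$ from Section~\ref{Folding the ends} to rewrite the sums, and supply the extra observation that when $r\in M_\alpha(\gamma)$ the condition $\pi(\gamma)\notin\{xs_r,s_\alpha xs_r\}$ together with $\pi(\gamma)s_r=s_\alpha\pi(\gamma)$ forces both $\gamma$ and $\dot\gamma$ into $\bar\Gamma_{\{x,s_\alpha x\}}$ so that Proposition~\ref{proposition:3} applies. You spell out more details (especially Case~2) than the paper, which simply points back to Lemma~\ref{lemma:7}, but the substance is identical.
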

\begin{proof} Take any $f\in\bar\X^x$. By Proposition~\ref{proposition:3}, in order to prove that $\dot f\in\bar\X^{xs_r}$,
we must check the following equivalences:
$$
\sum_{\delta\in\Gamma,\delta\sim_\alpha\gamma,J_\alpha(\delta)\subset J_\alpha(\gamma)}(-1)^{|J_\alpha(\delta)|}f(\dot\delta)\=0\pmod{\alpha^{|J_\alpha(\gamma)|}}
$$
for $\gamma\in\OGamma_{\{xs_r,s_\alpha xs_r\}}$ and
$$
\sum_{\delta\in\Gamma_{s_\alpha xs_r},\delta\sim_\alpha\gamma,D_\alpha(\delta)\subset D_\alpha(\gamma)}(-1)^{|D_\alpha(\delta)|}f(\dot\delta)\=0\pmod{\alpha^{|D_\alpha(\gamma)|}}
$$
for $\gamma\in\Gamma_{s_\alpha xs_r}$.
The first equivalence can be proved exactly as in Lemma~\ref{lemma:7}. Note that $\gamma,\dot\gamma\in\OGamma_{\{x,s_\alpha x\}}$
if $r\in M_\alpha(\gamma)$ (for Case~2). In view of the properties listed in Section~\ref{Folding the ends},
the second equivalence can be rewritten in the form
$$
\sum_{\dot\delta\in\Gamma_{s_\alpha x},\dot\delta\sim_\alpha\dot\gamma,D_\alpha(\dot\delta)\subset D_\alpha(\dot\gamma)}(-1)^{|D_\alpha(\dot\delta)|}f(\dot\delta)\=0\pmod{\alpha^{|D_\alpha(\dot\gamma)|}}.
$$
It holds by Proposition~\ref{proposition:3}.
\end{proof}

\begin{lemma}[cf. Lemma~\ref{lemma:8}]\label{lemma:14}
Let $f\in\bar\X^x$, $r>0$ and $t\in\{e,s_r\}$. We define $f'\in H^\bullet_T(\bar\Gamma'_{xt})$ by $f'(\gamma')=f(\gamma'\cdot t)$.
Then $f'\in(\bar\X')^{xt}$.
\end{lemma}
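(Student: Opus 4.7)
The plan is to mirror the strategy of Lemma~\ref{lemma:8} very closely: first treat the case $t=e$ by promoting the geometric embedding $\iota:\Sigma'\hookrightarrow\Sigma$, $[p_1,\ldots,p_{r-1}]\mapsto[p_1,\ldots,p_{r-1},e]$ to the complements $\bar\Sigma_x$, and then reduce the case $t=s_r$ to this via the folding automorphism from Lemma~\ref{lemma:13}.

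For $t=e$, I first check that $\iota$ sends $\bar\Sigma'_x$ into $\bar\Sigma_x$. Since appending $e$ at the end does not change the image in $G/B$, we have $\pi\circ\iota=\pi'$, so a point with $\pi'$-image different from $x$ is sent to a point with $\pi$-image different from $x$. Hence $\iota$ restricts to a $T$-equivariant embedding $\iota:\bar\Sigma'_x\hookrightarrow\bar\Sigma_x$, which on $T$-fixed points is $\gamma'\mapsto\gamma'\cdot e$ and identifies $\bar\Gamma'_x$ with $\{\gamma\in\bar\Gamma_x\suchthat\gamma_r=e\}$. This yields the commutative square of restrictions
$$
\begin{tikzcd}
H^\bullet_T(\bar\Sigma_x)\arrow{d}\arrow{r}&H^\bullet_T(\bar\Sigma'_x)\arrow{d}\\
H^\bullet_T(\bar\Gamma_x)\arrow{r}&H^\bullet_T(\bar\Gamma'_x)
\end{tikzcd}
$$
whose vertical arrows have images $\bar\X^x$ and $(\bar\X')^x$ respectively. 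Chasing $f\in\bar\X^x$ through this square shows that $f'(\gamma')=f(\gamma'\cdot e)$ lies in $(\bar\X')^x=(\bar\X')^{xt}$, completing the $t=e$ case.

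For $t=s_r$, I apply Lemma~\ref{lemma:13} to obtain $\dot f\in\bar\X^{xs_r}$, and then apply the $t=e$ case already established, but with $x$ replaced by $xs_r$ and $f$ replaced by $\dot f$. This produces an element of $(\bar\X')^{xs_r}$ given by
$$
(\dot f)'(\gamma')=\dot f(\gamma'\cdot e)=f\bigl(\dot{\gamma'\cdot e}\bigr)=f(\gamma'\cdot s_r)=f'(\gamma'),
$$
where we used that the dot automorphism only alters the last coordinate. Since $\bar\Gamma'_{xs_r}=\bar\Gamma'_{xt}$, this gives $f'\in(\bar\X')^{xt}$ as required. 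The only non-formal step is the observation $\pi\circ\iota=\pi'$, which is exactly what makes $\iota$ descend to the complements; once this is in place, the rest is diagram chasing plus the reduction via the folding automorphism, so I do not expect any real obstacle.
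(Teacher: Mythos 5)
Your proposal is correct and takes essentially the same approach as the paper's proof: the $t=e$ case via the embedding $\iota:\bar\Sigma'_x\hookrightarrow\bar\Sigma_x$ (justified by $\pi\circ\iota=\pi'$) and the resulting commutative square of restrictions, then the $t=s_r$ case by folding with Lemma~\ref{lemma:13} and reusing the $t=e$ case, exactly mirroring the structure of Lemma~\ref{lemma:8}. Your write-up supplies the details the paper leaves implicit, but the argument is the same.
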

\begin{proof} The same embedding $\iota$ as in Section~\ref{Fixing the ends} induces the commutative diagram
$$
\begin{tikzcd}
H^\bullet_T(\bar\Sigma_x)\arrow{d}\arrow{r}&H^\bullet_T(\bar\Sigma'_x)\arrow{d}\\
H^\bullet_T(\bar\Gamma_x)\arrow{r}&H^\bullet_T(\bar\Gamma'_x)
\end{tikzcd}
$$
Therefore the lemma holds for $t=e$. In order to prove it for $t=s_r$, consider $\dot f$ and apply
Lemma~\ref{lemma:13}.
\end{proof}

The version of Lemma~\ref{lemma:9} requires however a more accurate choice of $t$.

\begin{lemma}[cf. Lemma~\ref{lemma:9}]\label{lemma:12}
Let $x\in W$, $f\in\bar\X^x$ and $r>0$. We can choose a unique $t\in\{e,s_r\}$ such that $xt<xts_r$.
Suppose that $f(\gamma)=0$ for $\gamma_r\ne t$.
Then $f(\gamma)$ is divisible by $\bbeta_r(\gamma)$ for any $\gamma\in\bar\Gamma_x$.
Moreover, the function $\gamma'\mapsto f(\gamma'\cdot t)/\bbeta_r(\gamma'\cdot t)$, where $\gamma'\in{\bar\Gamma}'_{xt}$,
belongs to $({\bar\X}')^{xt}$.
\end{lemma}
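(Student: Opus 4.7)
My plan is to adapt the proof of Lemma~\ref{lemma:9} by replacing Proposition~\ref{proposition:1} throughout with Proposition~\ref{proposition:3}, while respecting the fact that $f$ is now defined only on $\bar\Gamma_x$. The distinguished choice $xt<xts_r$ is what is needed to guarantee that, after the appropriate divisions, the resulting quotient function $f'$ lands in $(\bar\X')^{xt}$ rather than in a shifted variant, with the extra freedom of Lemma~\ref{lemma:13} available to pass between the two.

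For the first claim I would argue by induction along $\trianglelefteq$. Fix $\gamma\in\bar\Gamma_x$ with $\gamma_r=t$ (the case $\gamma_r\ne t$ is vacuous), and let $\alpha$ denote the positive one of $\pm\bbeta_r(\gamma)$, so that $r\in M_\alpha(\gamma)$. According as $\pi(\gamma)\ne s_\alpha x$ or $\pi(\gamma)=s_\alpha x$, I apply congruence~(\ref{eq:42}) or~(\ref{eq:43}) of Proposition~\ref{proposition:3} to $f$ modulo $\alpha$. As in Lemma~\ref{lemma:9}, I then split according as $r\in J_\alpha(\gamma)$ or $r\notin J_\alpha(\gamma)$; the latter subcase is reduced to the former by passing to $\dot\gamma$ (invoking Lemma~\ref{lemma:13} to stay inside $\bar\X^{xs_r}$). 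In every branch each summand other than $f(\gamma)$ is indexed by some $\delta\vartriangleleft\gamma$ that is either zero by the support hypothesis on $f$ or divisible by $\alpha$ by the inductive hypothesis, which forces $\alpha\mid f(\gamma)$ and hence $\bbeta_r(\gamma)\mid f(\gamma)$.

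For the second claim, let $f'(\gamma'):=f(\gamma'\cdot t)/\bbeta_r(\gamma'\cdot t)$ on $\bar\Gamma'_{xt}$. Using Proposition~\ref{proposition:3} for the truncated sequence, I would verify both equivalence~(\ref{eq:42}) at each $\gamma'\in\bar\Gamma'_{\{xt,s_\alpha xt\}}$ and equivalence~(\ref{eq:43}) at each $\gamma'\in\Gamma'_{s_\alpha xt}$. Following the three-case organization of Lemma~\ref{lemma:9}, I set $\gamma:=\gamma'\cdot t$ and distinguish $r\notin M_\alpha(\gamma)$, $r\in M_\alpha(\gamma)\setminus J_\alpha(\gamma)$, and $r\in J_\alpha(\gamma)$. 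In each case the required congruence for $f'$ is obtained by taking the appropriate congruence of Proposition~\ref{proposition:3} for $f$ at $\gamma$ (or at $\dot\gamma$, via Lemma~\ref{lemma:13}) and dividing through by the common factor $\bbeta_r(\gamma)=\pm\alpha$; the key simplification is that $\bbeta_r(\delta'\cdot t)\equiv\pm\bbeta_r(\gamma)\pmod{\alpha}$ for every $\delta'\sim_\alpha\gamma'$, since $\pi(\delta')\in\{\pi(\gamma'),s_\alpha\pi(\gamma')\}$.

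The principal obstacle is Case~1 ($r\notin M_\alpha(\gamma)$) in verifying~(\ref{eq:42}) for $f'$. After splitting the summation along $\pi(\delta)\in\{\pi(\gamma'),s_\alpha\pi(\gamma')\}t$ and dividing by $\pi(\gamma')t(-\alpha_r)$, one is left with an extra error term proportional to $\alpha$ exactly as in~(\ref{eq:29}) in the proof of Lemma~\ref{lemma:9}. To absorb it, I would apply Proposition~\ref{proposition:2} to the restriction $f\big|_{\Gamma_{\pi(\gamma')t}}\in\X^{\pi(\gamma')t}$; this is legitimate because $\pi(\gamma')t\ne x$ (guaranteed by $\gamma'\in\bar\Gamma'_{\{xt,s_\alpha xt\}}$) and because $\pi(\gamma')t(-\alpha_r)$ is non-proportional to $\alpha$ precisely in Case~1. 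The role of the choice $xt<xts_r$ is then to normalize the sign of $\bbeta_r$ consistently so that the divided result belongs to $(\bar\X')^{xt}$; the opposite choice of $t$ would produce the analogous statement for $(\bar\X')^{xts_r}$, which by Lemma~\ref{lemma:13} is equivalent after applying the involution $\gamma\mapsto\dot\gamma$.
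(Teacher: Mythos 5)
Your overall strategy — adapting Lemma~\ref{lemma:9} by substituting Proposition~\ref{proposition:3} for Proposition~\ref{proposition:1}, handling the two cases $\gamma\in\bar\Gamma_{\{x,s_\alpha x\}}$ and $\gamma\in\Gamma_{s_\alpha x}$ separately — matches the paper's. However, your account of where the hypothesis $xt<xts_r$ enters is wrong, and this hides a genuine gap in your divisibility argument. You claim that, in the case $\pi(\gamma)=s_\alpha x$, applying~(\ref{eq:43}) ``modulo $\alpha$'' lets the induction go through. But~(\ref{eq:43}) only gives a congruence modulo $\alpha^{|D_\alpha(\gamma)|}$, and you never verify that $|D_\alpha(\gamma)|\ge 1$. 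Without that, the congruence is vacuous and the inductive step collapses.

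This is exactly where the choice of $t$ does its real work, and the paper's proof makes it explicit: from $\gamma\in\Gamma_{s_\alpha x}$ and equivalence~(\ref{eq:33}) one has $\gamma\in\Gamma_{xs_r}$, hence $\gamma^{r-1}=xs_rt$, and the inequality $xt<xts_r$ translates into $\gamma^{r-1}s_r<\gamma^{r-1}$, i.e.\ $r\in D_\alpha(\gamma)$. That is what makes~(\ref{eq:43}) at $\gamma$ non-vacuous. Your stated role for the choice of $t$ (``normalize the sign of $\bbeta_r$ so that the divided result lands in $(\bar\X')^{xt}$ rather than $(\bar\X')^{xts_r}$'') is not the decisive point; the decisive point is the one you omitted. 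You should also record, in the $\Gamma_{s_\alpha x}$ case, that each $\delta$ in the summation of~(\ref{eq:43}) satisfies $\delta\trianglelefteq\gamma$ (which the paper deduces from $\pi(\delta)=\pi(\gamma)$ and Proposition~\ref{lemma:4}), so that the inductive hypothesis and the support hypothesis actually apply to the other summands.

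The second half of your proposal is sound in outline: after noting that the $\bar\Gamma'_{\{xt,s_\alpha xt\}}$ cases reduce to Lemma~\ref{lemma:9} (with the observation you correctly identify, that $\Gamma_{yt}\subset\bar\Gamma_x$ so the restriction lies in $\X^{yt}$), the remaining verification is~(\ref{eq:43}) for $\gamma'\in\bar\Gamma'_{s_\alpha xt}$, where the paper splits into $r\notin M_\alpha(\gamma)$ and $r\in M_\alpha(\gamma)$; in the latter sub-case the factor of $\alpha$ that one cancels comes from $r\in D_\alpha(\delta)$ for every $\delta$ in the sum, again forced by $xt<xts_r$. Both halves of the proof, not just the first, use this inequality in the same structural role, which your write-up does not capture.
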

\begin{proof}
%<sh
First, we show how to choose $t$. Let us choose $t\in\{e,s_r\}$ arbitrarily. The elements $xt$ and $xts_r$
are comparable with respect to the Bruhat order, as they differ by a reflection.
The element $t$ is already chosen if $xt<xts_r$. Suppose that $xt>xts_r$. The we set $t'=ts_r$ and get
$xt'=xts_r<xt=xt's_r$.
The uniqueness is clear from $xt<xts_r\Leftrightarrow xt'>xt's_r$ with $t'$ as before.
%sh>

As in the proof of Lemma~\ref{lemma:9}, we shall prove the divisibility claim by induction with respect to $\trianglelefteq$.
So suppose that $f(\delta)$ is divisible by $\bbeta_r(\delta)$ for any $\delta\in\bar\Gamma_x$ such that
$\delta\vartriangleleft\gamma$ for some $\gamma\in\bar\Gamma_x$.
We must prove that $f(\gamma)$ is divisible by $\bbeta_r(\gamma)$. Clearly, we need only to consider the case $\gamma_r=t$.

We take for $\alpha$ the positive of the two roots $\bbeta_r(\gamma)$ and $-\bbeta_r(\gamma)$.
Thus $r\in M_\alpha(\gamma)$. Note the following chain of equivalences:
\begin{equation}\label{eq:33}
\gamma\in\Gamma_{s_\alpha x}\Leftrightarrow\gamma^r=s_\alpha x=\gamma^rs_r(\gamma^r)^{-1}x\Leftrightarrow\gamma^rs_r=x\Leftrightarrow\gamma^r=xs_r\Leftrightarrow\gamma\in\Gamma_{xs_r}.
\end{equation}
Similarly, we get $\dot\gamma\in\Gamma_{s_\alpha x}\Leftrightarrow\dot\gamma\in\Gamma_{xs_r}$.

The case $\gamma\in\bar\Gamma_{\{x,s_\alpha x\}}$ is identical to Cases~1 and~2 of Lemma~\ref{lemma:9},
where one applies Proposition~\ref{proposition:3} instead of Proposition~\ref{proposition:1}.
Note that in this case $\dot\gamma\in\bar\Gamma_{\{x,s_\alpha x\}}$ by~(\ref{eq:33}),
Case~1 corresponds to $r\in J_\alpha(\gamma)$ and Case~2 to $r\notin J_\alpha(\gamma)$.

Consider the case $\gamma\in\Gamma_{s_\alpha x}$. By Proposition~\ref{proposition:3}, we get
$$
\sum_{\delta\in\Gamma_{s_\alpha x},\delta\sim_\alpha\gamma,D_\alpha(\delta)\subset D_\alpha(\gamma)}(-1)^{|D_{\alpha}(\delta)|}f(\delta)\=0\pmod{\alpha^{|D_\alpha(\gamma)|}}.
$$
We have $\pi(\delta)=\pi(\gamma)$ in the summation. It follows from this fact and Proposition~\ref{lemma:4}
that $J_\alpha(\delta)\subset J_\alpha(\gamma)$. Hence $\delta\trianglelefteq\gamma$.

It remains to check that $D_\alpha(\gamma)\ne\emptyset$. By~(\ref{eq:33}), we get $\gamma\in\Gamma_{xs_r}$.
%Как известно, $\#D_\alpha(\gamma)=\#J_\alpha(\gamma')$.
%Мы получаем следующую цепочку эквивалентностей
%$$
%r-1\in J_\alpha(\gamma')
%$$
Thus $\gamma^{r-1}=\gamma^rt=xs_rt$, whence our condition $xt<xts_r$ implies $\gamma^{r-1}s_r<\gamma^{r-1}$ and
$r\in D_\alpha(\gamma)$.

Let us prove the last claim. We denote by $f'$ the function under consideration:
$f'(\gamma')=f(\gamma'\cdot t)/\bbeta_r(\gamma'\cdot t)$.
By Proposition~\ref{proposition:3}, we must check the equivalence
$$
%\sum_{\delta'\in\Gamma',\delta'\sim_\alpha\gamma',J_\alpha(\delta')\subset J_\alpha(\gamma')}(-1)^{|J_\alpha(\delta')|}\frac{f(\delta't)}{\bbeta_r(\delta't)}\=0\pmod{\alpha^{|J_\alpha(\gamma')|}}
\sum_{\delta'\in\Gamma',\delta'\sim_\alpha\gamma',J_\alpha(\delta')\subset J_\alpha(\gamma')}(-1)^{|J_\alpha(\delta')|}f'(\delta')\=0\pmod{\alpha^{|J_\alpha(\gamma')|}}
$$
for any $\gamma'\in\bar\Gamma'_{\{xt,s_\alpha xt\}}$ and the equivalence
\begin{equation}\label{eq:34}
\sum_{\delta'\in\Gamma'_{s_\alpha xt},\delta'\sim_\alpha\gamma',D_\alpha(\delta')\subset D_\alpha(\gamma')}(-1)^{|D_\alpha(\delta')|}f'(\delta')\=0\pmod{\alpha^{|D_\alpha(\gamma')|}}
\end{equation}
for any $\gamma'\in\bar\Gamma'_{s_\alpha xt}$. The first one can be proved exactly as in Lemma~\ref{lemma:9},
as $\gamma\in\bar\Gamma_{\{x,s_\alpha x\}}$ and $\dot\gamma\in\bar\Gamma_{\{x,s_\alpha x\}}$ if $r\in M_\alpha(\gamma)$ (Case~2),
where $\gamma=\gamma'\cdot t$.

It remains to prove~(\ref{eq:34}). In this case, $\gamma=\gamma'\cdot t\in\Gamma_{s_\alpha x}$. We consider the following cases.

{\it Case a: $r\notin M_\alpha(\gamma)$.} In this case $\bbeta_t(\gamma)\ne\pm\alpha$. Hence
$s_\alpha x(-\alpha_r)\ne\pm\alpha$. By Proposition~\ref{proposition:3}, we get
\begin{equation}\label{eq:35}
\sum_{\delta\in\Gamma_{s_\alpha x},\delta\sim_\alpha\gamma,\delta_r=t,D_\alpha(\delta)\subset D_\alpha(\gamma)}(-1)^{|D_\alpha(\delta)|}f(\delta)\=0\pmod{\alpha^{|D_\alpha(\gamma)|}}.
\end{equation}
As $\delta_r=\gamma_r=t$ in this summation, we can rewrite the above equivalence as follows
$$
s_\alpha x(-\alpha_r)\sum_{\delta'\in\Gamma_{s_\alpha xt},\delta'\sim_\alpha\gamma',D_\alpha(\delta')\subset D_\alpha(\gamma')}(-1)^{|D_\alpha(\delta')|}f'(\delta')\=0\pmod{\alpha^{|D_\alpha(\gamma')|}}.
$$
Cancelling out $s_\alpha x(-\alpha_r)$, we get~(\ref{eq:34}).

{\it Case b: $r\in M_\alpha(\gamma)$.} In this case, $xs_r=s_\alpha x$. For any $\delta\in\Gamma_{s_\alpha x}$
such that $\delta_r=t$ and $\delta\sim_\alpha\gamma$, we have
$\delta^{r-1}s_r=s_\alpha x t s_r=xt<xts_r=s_\alpha x t=\delta^{r-1}$.
Hence $r\in D_\alpha(\delta)$. Therefore we can rewrite~(\ref{eq:35}) as follows
$$
\pm\alpha\sum_{\delta'\in\Gamma_{s_\alpha xt},\delta'\sim_\alpha\gamma',D_\alpha(\delta')\subset D_\alpha(\gamma')}(-1)^{|D_\alpha(\delta')|}f'(\delta')\=0\pmod{\alpha^{|D_\alpha(\gamma')|+1}}
$$
Cancelling out $\pm\alpha$, we get~(\ref{eq:34}).
\end{proof}

\subsection{Basis for ${\bar{\mathcal X}}^x$}\label{Basis_for_barX}
%For any gallery $\gamma\in\Gamma$, we define
%$$
%\aa(\gamma)=\prod_{i\in D(\gamma)}\widetilde\bbeta_i(\gamma).
%$$
For any gallery $\gamma\in\Gamma$ and $x\in W$, we define
$$
\c_\emptyset^x=1,\quad
\c_\gamma^x=\left\{
\begin{array}{ll}
\Delta(\c_{\gamma'}^{x\gamma_r})&\text{if }x\gamma_r>x\gamma_rs_r;\\[3pt]
\nabla_{\gamma_r}(\c_{\gamma'}^{x\gamma_r})&\text{if }x\gamma_r<x\gamma_rs_r.
\end{array}
\right.
$$
By Lemmas~\ref{lemma:5} and~\ref{lemma:6}, we get $\c_\gamma^x\in\X$.

\begin{theorem}\label{theorem:3}
The set $\big\{\c_\gamma^x|_{\bar\Gamma_x}\;\big|\;\gamma\in\bar\Gamma_x\big\}$ is an $S$-basis of $\bar\X^x$. In particular,
the restrictions $\X\to\bar\X^x$ and  $H_T^\bullet(\Sigma)\to H_T^\bullet(\bar\Sigma_x)$ are
surjective.
\end{theorem}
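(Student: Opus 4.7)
The plan is to mimic the scheme of Theorem~\ref{theorem:4}, substituting the three lemmas adapted to $\bar\Sigma_x$ --- Lemmas~\ref{lemma:13},~\ref{lemma:14}, and~\ref{lemma:12} --- for their counterparts used there, and to proceed by induction on $r$. The base case $r=0$ is immediate: if $x=e$ then $\bar\Gamma_x=\emptyset$, $\bar\X^x=0$, and the claimed basis is empty; if $x\ne e$ then $\bar\Sigma_x=\Sigma=\{\pt\}$, $\bar\X^x=S$, and the basis consists of $\c_\emptyset^x=1$.

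For the inductive step, let $t_0\in\{e,s_r\}$ be the unique element with $xt_0<xt_0 s_r$ singled out by Lemma~\ref{lemma:12}, and set $t_1=t_0 s_r$. A direct check gives the bijection $\gamma\in\bar\Gamma_x\Leftrightarrow\gamma'\in\bar\Gamma'_{x\gamma_r}$, and the defining formula for $\c_\gamma^x$ unfolds into $\c_\gamma^x=\Delta(\c_{\gamma'}^{xt_1})$ when $\gamma_r=t_1$ and $\c_\gamma^x=\nabla_{t_0}(\c_{\gamma'}^{xt_0})$ when $\gamma_r=t_0$. The first is insensitive to the last letter of its argument while the second vanishes on any $\delta$ with $\delta_r\ne t_0$; these two features drive the whole argument.

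For spanning, I would take $f\in\bar\X^x$, apply Lemma~\ref{lemma:14} with $t=t_1$ to get $f_1'(\gamma'):=f(\gamma'\cdot t_1)\in(\bar\X')^{xt_1}$, and write $f_1'=\sum_{\gamma'\in\bar\Gamma'_{xt_1}}\alpha_{\gamma'}\,\c_{\gamma'}^{xt_1}|_{\bar\Gamma'_{xt_1}}$ by induction. The difference $g:=f-\sum\alpha_{\gamma'}\Delta(\c_{\gamma'}^{xt_1})|_{\bar\Gamma_x}$ still lies in $\bar\X^x$ and vanishes on every $\delta\in\bar\Gamma_x$ with $\delta_r=t_1$, which is precisely the hypothesis of Lemma~\ref{lemma:12}. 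That lemma then supplies $h'(\gamma'):=g(\gamma'\cdot t_0)/\bbeta_r(\gamma'\cdot t_0)\in(\bar\X')^{xt_0}$, and by induction $h'=\sum\beta_{\gamma'}\,\c_{\gamma'}^{xt_0}|_{\bar\Gamma'_{xt_0}}$, whence $g=\sum\beta_{\gamma'}\nabla_{t_0}(\c_{\gamma'}^{xt_0})|_{\bar\Gamma_x}$ and $f$ lies in the $S$-span of the claimed basis.

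For $S$-linear independence, assume $\sum_{\gamma\in\bar\Gamma_x}\alpha_\gamma\,\c_\gamma^x|_{\bar\Gamma_x}=0$ and restrict to $\{\delta\in\bar\Gamma_x:\delta_r=t_1\}$: every summand with $\gamma_r=t_0$ dies, leaving $\sum_{\gamma'\in\bar\Gamma'_{xt_1}}\alpha_{\gamma'\cdot t_1}\,\c_{\gamma'}^{xt_1}(\delta')=0$ on $\bar\Gamma'_{xt_1}$, whence induction forces the coefficients with $\gamma_r=t_1$ to vanish. On the complementary piece $\{\delta_r=t_0\}$, the remaining relation carries a common factor $\bbeta_r(\delta)\ne0$ in the integral domain $S$, which cancels and reduces via induction to $\alpha_\gamma=0$ for all $\gamma_r=t_0$. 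The ``in particular'' clauses follow: $\X\to\bar\X^x$ is surjective because each basis element is the restriction of the global lift $\c_\gamma^x\in\X$, and $H^\bullet_T(\Sigma)\to H^\bullet_T(\bar\Sigma_x)$ is surjective by composing with the localization embedding $H^\bullet_T(\bar\Sigma_x)\hookrightarrow\bar\X^x$ established in Section~\ref{Localization_for_Sigma_without_x}. The main pitfall to guard against is the asymmetry in Lemma~\ref{lemma:12}, whose divisibility statement is only available in the descent direction $t_0$: the $\Delta$-piece of the decomposition of $f$ must therefore be subtracted before the $\nabla_{t_0}$-piece, so that $g$ really does vanish on $\delta_r=t_1$ and Lemma~\ref{lemma:12} can be applied.
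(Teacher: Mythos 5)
Your proof is correct and follows the paper's argument essentially verbatim: choose the descent/ascent pair $t_0,t_1$ (the paper calls them $t,q$), peel off the $\Delta$-piece first using Lemma~\ref{lemma:14} and induction, then divide the remainder by $\bbeta_r$ via Lemma~\ref{lemma:12} to get the $\nabla_{t_0}$-piece, and prove independence by evaluating on the two sub-pieces $\delta_r=t_1$ and $\delta_r=t_0$ in turn. Your closing remark about the necessary order of operations — subtracting the $\Delta$-piece before invoking Lemma~\ref{lemma:12} — correctly identifies the one point where the asymmetry of that lemma constrains the argument.
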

\begin{proof} We apply induction on $r$, the result being obvious for $r=0$.
Now let $r>0$ and $f$ be an element of $\bar\X^x$. Choose $q\in\{e,s_r\}$ so that $xq>xqs_r$
and define $f'(\gamma')=f(\gamma'\cdot q)$ for $\gamma'\in\bar\Gamma'_{xq}$.
By Lemma~\ref{lemma:14}, we get $f'\in(\bar\X')^{xq}$.
By the inductive hypothesis,
$f'=\sum_{\gamma'\in\bar\Gamma'_{xq}}a_{\gamma'}\c^{xq}_{\gamma'}|_{\bar\Gamma'_{xq}}$
for some $a_{\gamma'}\in S$. Consider the difference
\begin{equation}\label{eq:46}
h=f-\sum_{\gamma\in\bar\Gamma_x,\gamma_r=q}a_{\gamma'}\c^x_\gamma|_{\bar\Gamma_x}.
\end{equation}
By the above definitions, we get $h(\delta)=0$ for any $\delta\in\bar\Gamma_x$ such that
$\delta_r=q$:
\begin{multline*}
h(\delta)=f(\delta)-\sum_{\gamma\in\bar\Gamma_x,\gamma_r=q}a_{\gamma'}\c^x_\gamma(\delta)=
f'(\delta')-\sum_{\gamma\in\bar\Gamma_x,\gamma_r=q}a_{\gamma'}\Delta(\c_{\gamma'}^{xq})(\delta)=\\
=\sum_{\gamma'\in\bar\Gamma'_{xq}}a_{\gamma'}\c^{xq}_{\gamma'}(\delta')
-\sum_{\gamma\in\bar\Gamma_x,\gamma_r=q}a_{\gamma'}\c_{\gamma'}^{xq}(\delta')=0.
\end{multline*}
Let $t$ be the element of $\{e,s_r\}$ distinct form $q$.
We clearly have $xt<xts_r$. Thus by Lemma~\ref{lemma:12}, we get that the function
$h'$ defined by $h'(\gamma')=h(\gamma'\cdot t)/\bbeta_r(\gamma'\cdot t)$ for $\gamma'\in{\bar\Gamma}'_{xt}$
is a well-defined element of $(\bar\X')^{xt}$. By induction,
$h'=\sum_{\gamma'\in\bar\Gamma'_{xt}}b_{\gamma'}\c^{xt}_{\gamma'}|_{\bar\Gamma'_{xt}}$ for some $b_{\gamma'}\in S$.
We get
$
h=\sum_{\gamma\in\bar\Gamma_x,\gamma_r=t}b_{\gamma'}\c^x_\gamma|_{\bar\Gamma_x}$.
Indeed both sides evaluate to $0$ at $\delta\in\bar\Gamma_x$ such that $\delta_r=q$ and
for $\delta\in\bar\Gamma_x$ such that $\delta_r=t$, we get
\begin{multline*}
h(\delta)-\sum_{\gamma\in\bar\Gamma_x,\gamma_r=t}b_{\gamma'}\c^x_\gamma(\delta)
=h(\delta'\cdot t)-\sum_{\gamma\in\bar\Gamma_x,\gamma_r=t}b_{\gamma'}\nabla_t(\c_{\gamma'}^{xt})(\delta'\cdot t)\\
=
\bbeta_r(\delta)h'(\delta')-\sum_{\gamma\in\bar\Gamma_x,\gamma_r=t}b_{\gamma'}\bbeta_r(\delta)\c_{\gamma'}^{xt}(\delta')=\\
=\bbeta_r(\delta)\sum_{\gamma'\in\bar\Gamma'_{xt}}b_{\gamma'}\c^{xt}_{\gamma'}(\delta')-
\sum_{\gamma\in\bar\Gamma_x,\gamma_r=t}b_{\gamma'}\bbeta_r(\delta)\c_{\gamma'}^{xt}(\delta')=0.
\end{multline*}
Hence and from~(\ref{eq:46}), we get that $f$ is a $S$-linear combination of elements of our set.

Finally, let us prove the linear independence. Suppose that we have $\sum_{\gamma\in\bar\Gamma_x}a_\gamma\c_\gamma^x=0$
for some $a_\gamma\in S$. We can write this sum as follows
\begin{equation}\label{eq:36}
\sum_{\gamma\in\bar\Gamma_x,\gamma_r=q}a_\gamma\Delta(\c_{\gamma'}^{xq})
+\sum_{\gamma\in\bar\Gamma_x,\gamma_r=t}a_\gamma\nabla_t(\c_{\gamma'}^{xt})=0.
\end{equation}
Evaluation at $\delta\in\bar\Gamma_x$ with $\delta_r=q$ yields
$\sum_{\gamma'\in\bar\Gamma'_{xq}}a_\gamma\c_{\gamma'}^{xq}(\delta')=0$. Hence by the inductive hypothesis,
$a_\gamma=0$ for any
$\gamma\in\bar\Gamma_x$ such that $\gamma_r=q$.
Therefore~(\ref{eq:36}) takes the form
$\sum_{\gamma\in\bar\Gamma_x,\gamma_r=t}a_\gamma\nabla_t(\c_{\gamma'}^{xt})=0$.
Evaluation at $\delta\in\bar\Gamma_x$ with $\delta_r=t$ yields
$\sum_{\gamma'\in\bar\Gamma'_{xt}}a_\gamma\bbeta_r(\delta)\c_{\gamma'}^{xt}(\delta')=0$,
whence $\sum_{\gamma'\in\bar\Gamma'_{xt}}a_\gamma\c_{\gamma'}^{xt}(\delta')=0$.
By the inductive hypothesis $a_\gamma=0$ for any
$\gamma\in\bar\Gamma_x$ such that $\gamma_r=t$.
\end{proof}

\begin{remark}\label{remark:2}\rm We have $\{\c_\gamma^x\suchthat\gamma\in\Gamma\}=B_{\xi_r(x)}$, where $\xi_r(x)\in\Upsilon$ is defined
inductively as follows: $\xi_0(x)=e$ and
$$
x\xi_r(x)_\emptyset<x\xi_r(x)_\emptyset s_r,\quad \xi_r(x)'_0=\xi_{r-1}(x\xi_r(x)_\emptyset s_r),\quad \xi_r(x)'_1=\xi_{r-1}(x\xi_r(x)_\emptyset).
$$
\end{remark}

\section{The costalk-to-stalk embedding and the decomposition of the direct image}\label{costalk-to-stalk embedding}

\subsection{Change of coefficients}\label{Change_of_coefficients}
Let $k$ be a principal ideal domain with invertible $2$ if the root system contains a component of type $C_n$.
%
%For a principal ideal domain $k$ of characteristic not equal to $2$ if the root system
%is of type $C_n$, let $\X(k)$, $\X^x(k)$, $\OX^x(k)$ denote
%the images of the restrictions $H_T^\bullet(\Sigma,k)\to H_T^\bullet(\Gamma,k)$,
%$H_T^\bullet(\Sigma_x,k)\to H_T^\bullet(\Gamma_x,k)$ and $H_T^\bullet(\OSigma_x,k)\to H_T^\bullet(\OGamma_x,k)$
%respectively. All these maps are injective, as follows from Corollary~\ref{corollary:1} and argument of
%Section~\ref{Localization_for_Sigma_without_x}.
%
%We shall use the following result, which is a consequence of the projection formula.
%
%\begin{proposition}\label{proposition:4}
%Let $A$ be a principal ideal domain and $A\to k$ be a ring homomorphism, which makes any $k$-module into a $A$-module.
%For any topological space $X$,
%we get the following exact sequence:
%$$
%0\to H_c^i(X,A)\otimes_A k\to H_c^i(X,k)\to\Tor_1(H_c^{i+1}(X,A),k)\to0.
%$$
%\end{proposition}
%
%Now let $k$ be a principal ideal domain such that $2$ is invertible if the root system is of type $C_n$, $n\ge3$.
Consider the canonical ring homomorphism $\Z'\to k$. It extends to the ring homomorphism $S\to S_k$.
We get the following commutative diagram:
\begin{equation}\label{eq:50}
\begin{tikzcd}
H^i(\Sigma)\otimes_{\Z'}k\arrow{r}\arrow{d}&H^i(\Sigma,k)\arrow{d}\\
H^i(\Gamma)\otimes_{\Z'}k\arrow{r}&H^i(\Gamma,k)
\end{tikzcd}
\end{equation}
As $H^i(\Sigma)$ and $H^i(\Sigma,k)$ vanish in odd degrees, $\Sigma$ is compact and $\Gamma$ is finite,
Proposition~\ref{proposition:4} implies that the horizontal arrows are isomorphisms.
%Moreover the bottom arrow is
%an embedding by Corollary~\ref{corollary:1}. Therefore the upper arrow is also an embedding.
Hence we get the following chain of isomorphisms:
$$
H^\bullet_T(\Sigma)\otimes_SS_k\simeq(H^\bullet(\Sigma)\otimes_{\Z'}S)\otimes_SS_k\simeq(H^\bullet(\Sigma)\otimes_{\Z'}k)\otimes_kS_k\simeq H^\bullet(\Sigma,k)\otimes_kS_k\simeq H_T^\bullet(\Sigma,k).
$$
The similar chain yields an isomorphism $H^\bullet_T(\Gamma)\otimes_SS_k\ito H^\bullet_T(\Gamma,k)$. Diagram~(\ref{eq:50})
proves that these isomorphisms are compatible with the restriction from $\Sigma$ to $\Gamma$. This means that we get
the following commutative diagram:
%$$
%\begin{tikzcd}
%H_T^\bullet(\Sigma)\otimes_SS_k\arrow{r}{\sim}\arrow[hook]{d}&H_T^\bullet(\Sigma,k)\arrow[hook]{d}\\
%H_T^\bullet(\Gamma)\otimes_SS_k\arrow{r}{\sim}&H_T^\bullet(\Gamma,k)
%\end{tikzcd}
%$$
%This diagram can be extended to the following commutative diagram
$$
\begin{tikzcd}
{}&H_T^\bullet(\Sigma)\otimes_SS_k\arrow{r}{\sim}\arrow[hook]{d}&H_T^\bullet(\Sigma,k)\arrow[hook]{d}\\
\X\otimes_SS_k\arrow[hook]{r}\arrow{ur}{\sim}&H_T^\bullet(\Gamma)\otimes_SS_k\arrow{r}{\sim}&H_T^\bullet(\Gamma,k)
\end{tikzcd}
$$
If we go along the upper path, then we get an isomorphism of $S_k$-modules $\X\otimes_SS_k\ito\X(k)$.
However this map is the same as the map of the lower path.
Hence we get the following result.

\begin{lemma}\label{lemma:15}
%There exist canonical isomorphisms
%$\X(k)\simeq\X\otimes_SS_k$ and $\X^x(k)\simeq\X^x\otimes_SS_k$.
%The natural isomorphism $H^\bullet_T(\Gamma)\otimes_S S_k\ito H^\bullet(\Gamma,k)$ induces
%the
There exist an isomorphism of $S_k$-modules (dashed arrow) such that the following diagram is commutative:
$$
\begin{tikzcd}
\X\otimes_SS_k\arrow[dashed]{r}{\exists}[swap]{\sim}\arrow[hook]{d}&\X(k)\arrow[hook]{d}\\
H^\bullet_T(\Gamma)\otimes_S S_k\arrow{r}[swap]{\sim}&H_T^\bullet(\Gamma,k)
\end{tikzcd}
$$
%Moreover, the $S$-modules $\X$ is free and if $\{b_i\}_{i\in I}$ is its basis, then
%$\{\sigma\circ b_i\}_{i\in I}$ is a basis of $\X(k)$.
\end{lemma}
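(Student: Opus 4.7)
The plan is to assemble the dashed isomorphism directly from the commutative diagram displayed immediately before the statement; once the base-change squares are in place the result is essentially formal.

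First I would make explicit the identification $\X\simeq H^\bullet_T(\Sigma)$ (and analogously $\X(k)\simeq H^\bullet_T(\Sigma,k)$). This is just the content of the first part of Corollary~\ref{corollary:1}: the restriction $H^\bullet_T(\Sigma)\to H^\bullet_T(\Gamma)$ is injective with image $\X$ by definition, so it is a canonical isomorphism of $S$-modules onto its image. Tensoring this isomorphism over $S$ with $S_k$ produces the slanted arrow $\X\otimes_S S_k\eqto H^\bullet_T(\Sigma)\otimes_S S_k$ in the diagram.

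Next I would establish the horizontal isomorphism $H^\bullet_T(\Sigma)\otimes_S S_k\eqto H^\bullet_T(\Sigma,k)$, which is the chain of isomorphisms displayed just before the lemma: one applies Proposition~\ref{proposition:4} (using that $H^\bullet(\Sigma)$ vanishes in odd degrees and is a finitely generated free $\Z'$-module in even degrees, together with compactness of $\Sigma$) to the top row of~(\ref{eq:50}), and then combines with the Leray spectral sequence degenerations $H^\bullet_T(\Sigma)\simeq H^\bullet(\Sigma)\otimes_{\Z'} S$ and $H^\bullet_T(\Sigma,k)\simeq H^\bullet(\Sigma,k)\otimes_k S_k$. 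Repeating the procedure with $\Sigma$ replaced by the finite $T$-set $\Gamma$ yields $H^\bullet_T(\Gamma)\otimes_S S_k\eqto H^\bullet_T(\Gamma,k)$, and naturality of base change in the space gives the commutativity of the right rectangle of the diagram.

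Finally I would read off the dashed arrow. The upper path from $\X\otimes_S S_k$ to $H^\bullet_T(\Gamma,k)$ factors as two isomorphisms followed by the inclusion $\X(k)\hookrightarrow H^\bullet_T(\Gamma,k)$, so it is injective with image exactly $\X(k)$ (by the very definition of $\X(k)$). The commutativity of the diagram shows it coincides with the lower path $\X\otimes_S S_k\to H^\bullet_T(\Gamma)\otimes_S S_k\eqto H^\bullet_T(\Gamma,k)$, and co-restricting this common map to $\X(k)$ produces the desired dashed isomorphism; the square in the statement then commutes by construction. The only non-bookkeeping step is the base-change isomorphism for $\Sigma$, and the hypotheses needed for it (freeness of $H^\bullet(\Sigma)$ and its concentration in even degrees) are precisely what the paper has already leveraged to set up $\X$ via Corollary~\ref{corollary:1}, so no additional obstacle arises.
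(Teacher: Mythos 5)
Your proposal is correct and follows essentially the same route as the paper: identify $\X$ with $H^\bullet_T(\Sigma)$ via the injective restriction from Corollary~\ref{corollary:1}, build the base-change isomorphism $H^\bullet_T(\Sigma)\otimes_S S_k\eqto H_T^\bullet(\Sigma,k)$ from Proposition~\ref{proposition:4} together with the Leray degeneration, do the same for $\Gamma$, use naturality (diagram~(\ref{eq:50})) to get a commuting square, and then read off the dashed arrow by observing that the upper path $\X\otimes_S S_k\to H^\bullet_T(\Sigma)\otimes_S S_k\to H_T^\bullet(\Sigma,k)\hookrightarrow H_T^\bullet(\Gamma,k)$ is injective with image $\X(k)$ and coincides with the lower path. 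This matches the paper's proof step for step, including the point that injectivity of the left vertical arrow of the target square is obtained for free from the factorization through the upper path rather than from a flatness argument.
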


Arguing similarly with $\Sigma_x$ and $\Gamma_x$, we get the following results.

\begin{lemma}\label{lemma:16}
%There exist canonical isomorphisms
%$\X(k)\simeq\X\otimes_SS_k$ and $\X^x(k)\simeq\X^x\otimes_SS_k$.
%
%The natural isomorphism $H^\bullet_T(\Gamma_x)\otimes_S S_k\ito H^\bullet(\Gamma_x,k)$ induces
%the isomorphism $\X^x\otimes_SS_k\ito\X^x(k)$.
%
There exist an isomorphism of $S_k$-modules (dashed arrow) such that the following diagram is commutative:
$$
\begin{tikzcd}
\X^x\otimes_SS_k\arrow[dashed]{r}{\exists}[swap]{\sim}\arrow[hook]{d}&\X^x(k)\arrow[hook]{d}\\
H^\bullet_T(\Gamma_x)\otimes_S S_k\arrow{r}[swap]{\sim}&H_T^\bullet(\Gamma_x,k)
\end{tikzcd}
$$
%Moreover, the $S$-modules $\X^x$ is free and if $\{b_i\}_{i\in I}$ is its basis, then
%$\{\sigma\circ b_i\}_{i\in I}$ is a basis of $\X^x(k)$.
\end{lemma}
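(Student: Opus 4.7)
The proof runs in exact parallel to that of Lemma~\ref{lemma:15}, with the pair $(\Sigma,\Gamma)$ replaced by $(\Sigma_x,\Gamma_x)$ throughout. I would first record that Corollary~\ref{corollary:1} applied to $\Sigma_x$ makes the embeddings $\X^x\hookrightarrow H_T^\bullet(\Gamma_x)$ and $\X^x(k)\hookrightarrow H_T^\bullet(\Gamma_x,k)$ into identifications of the images of the (injective) restrictions $H_T^\bullet(\Sigma_x)\hookrightarrow H_T^\bullet(\Gamma_x)$ and $H_T^\bullet(\Sigma_x,k)\hookrightarrow H_T^\bullet(\Gamma_x,k)$. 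Thus proving the lemma reduces to producing a compatible $S_k$-module isomorphism between the upstairs cohomologies.

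Next I would form the analog of diagram~(\ref{eq:50}) with $\Sigma_x$ and $\Gamma_x$ in place of $\Sigma$ and $\Gamma$. Since $\Sigma_x=\pi^{-1}(x)$ is a closed (hence compact) subvariety of $\Sigma$, its ordinary cohomology $H^i(\Sigma_x)$ vanishes in odd degrees and is free in each degree --- precisely the input already used in Section~\ref{Localization_theorems} to justify the Leray degeneracy $H_T^\bullet(\Sigma_x,k)\simeq H^\bullet(\Sigma_x,k)\otimes_kS_k$. The finite set $\Gamma_x$ of course also satisfies the hypotheses of Proposition~\ref{proposition:4}. Combining these, the horizontal arrows in the $(\Sigma_x,\Gamma_x)$-version of~(\ref{eq:50}) become isomorphisms.

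Concatenating with the Leray degeneracies over $\Z'$ and $k$ then yields the commutative square of $S_k$-modules
$$
\begin{tikzcd}
H_T^\bullet(\Sigma_x)\otimes_SS_k\arrow{r}{\sim}\arrow[hook]{d}&H_T^\bullet(\Sigma_x,k)\arrow[hook]{d}\\
H_T^\bullet(\Gamma_x)\otimes_SS_k\arrow{r}{\sim}&H_T^\bullet(\Gamma_x,k)
\end{tikzcd}
$$
in which both horizontal arrows are isomorphisms and both vertical arrows are embeddings. Since the image of the left vertical arrow is (by the very definition of tensor product with $\X^x$ as the image of a restriction, together with injectivity of the restriction) precisely $\X^x\otimes_SS_k$, and similarly the image of the right vertical arrow is $\X^x(k)$, the top isomorphism restricts to the required dashed isomorphism $\X^x\otimes_SS_k\ito\X^x(k)$, making the diagram of the lemma commute.

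I expect no essential obstacle here; the whole argument is a direct transposition of the preceding lemma and relies only on facts (compactness of $\Sigma_x$, odd-degree vanishing and freeness of $H^\bullet(\Sigma_x)$, finiteness of $\Gamma_x$, and the Leray degeneracy) that were already established or used in Section~\ref{Localization_theorems}. The mildly delicate bookkeeping point is that $\X^x\otimes_SS_k$ actually embeds into $H_T^\bullet(\Gamma_x)\otimes_SS_k$, but this follows at once from the commutativity of the square above together with the injectivity of the top and right arrows.
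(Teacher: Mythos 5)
Your proposal is correct and follows exactly what the paper intends by its one-line proof ``arguing similarly with $\Sigma_x$ and $\Gamma_x$'': you transpose the argument of Lemma~\ref{lemma:15} with $(\Sigma_x,\Gamma_x)$ in place of $(\Sigma,\Gamma)$, using compactness of $\Sigma_x$, odd-degree vanishing and freeness of $H^\bullet(\Sigma_x)$, finiteness of $\Gamma_x$, Proposition~\ref{proposition:4}, and Leray degeneracy, then identify the dashed arrow via the outer square as in the paper. The bookkeeping point you flag at the end (that the upper path's being an isomorphism onto $\X^x(k)$ forces the lower path, and hence $\X^x\otimes_SS_k\to H_T^\bullet(\Gamma_x)\otimes_SS_k$, to be injective) is precisely the paper's closing remark for Lemma~\ref{lemma:15}.
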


The case of $\OX^x(k)$ is more difficult, as $\OSigma_x$ is in general not compact.
However, we can use the Poincar\'e duality $H^i(\bar\Sigma_x,k)\simeq\Hom_{k\text{-mod}}(H_c^{2\dim\Sigma-i}(\bar\Sigma_x,k),k)$
establishes in Section~\ref{Localization_for_Sigma_without_x}.
We get the following sequence of canonical maps:

\vspace{-18pt}

\begin{multline*}
H^i(\OSigma_x)\otimes_{\Z'}k\simeq\Hom_{\Z'\text{-mod}}(H_c^{2\dim\Sigma-i}(\bar\Sigma_x),\Z')\otimes_{\Z'}k\stackrel\phi\to
\Hom_{k\text{-mod}}(H_c^{2\dim\Sigma-i}(\bar\Sigma_x)\otimes_{\Z'}k,k)\\[6pt]
\shoveright{\simeq\Hom_{k\text{-mod}}(H_c^{2\dim\Sigma-i}(\bar\Sigma_x,k),k)\simeq H^i(\bar\Sigma_x,k).}\\
\end{multline*}

\vspace{-18pt}

\noindent
From~Section~\ref{Localization_for_Sigma_without_x}, we know that $H_c^{2\dim\Sigma-i}(\bar\Sigma_x)$ is a finitely generated
free $\Z'$-module. Hence we conclude that the morphism $\phi$ in the sequence above is an isomorphism.
If we replace $\OSigma_x$ by $\OGamma_x$ in this argument, then we get an isomorphism
$H^i(\OGamma_x)\otimes_{\Z'}k\simeq H^i(\OGamma_x,k)$. It is rather easy to see that these
isomorphisms are compatible with the restriction from $\OSigma_x$ to $\OGamma_x$.
An argument similar to the one preceding Lemma~\ref{lemma:15}, proves the following result.

\begin{lemma}\label{lemma:17}
%The natural isomorphism $H^\bullet_T(\OGamma_x)\otimes_S S_k\ito H^\bullet(\OGamma_x,k)$ induces
%the isomorphism $\bar\X^x\otimes_SS_k\ito\bar\X^x(k)$.
There exist an isomorphism of $S_k$-modules (dashed arrow) such that the following diagram is commutative:
$$
\begin{tikzcd}
\OX^x\otimes_SS_k\arrow[dashed]{r}{\exists}[swap]{\sim}\arrow[hook]{d}&\OX^x(k)\arrow[hook]{d}\\
H^\bullet_T(\OGamma_x)\otimes_S S_k\arrow{r}[swap]{\sim}&H_T^\bullet(\OGamma_x,k)
\end{tikzcd}
$$
%Moreover, the $S$-modules $\OX^x$ is free and if $\{b_i\}_{i\in I}$ is its basis, then
%$\{\sigma\circ b_i\}_{i\in I}$ is a basis of $\OX^x(k)$.
\end{lemma}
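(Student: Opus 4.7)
The proof proceeds along the same template as Lemmas~\ref{lemma:15} and~\ref{lemma:16}. In those lemmas, Proposition~\ref{proposition:4} (together with odd-degree vanishing and compactness) established an isomorphism $H^i(X)\otimes_{\Z'}k\ito H^i(X,k)$, Leray degeneracy upgraded this to an isomorphism $H^\bullet_T(X)\otimes_S S_k\ito H^\bullet_T(X,k)$, naturality of these maps with respect to the inclusion of $T$-fixed points $Y\hookrightarrow X$ produced a commutative square analogous to~(\ref{eq:50}), and finally the ``upper path versus lower path'' argument identified the image $\X\otimes_SS_k$ with $\X(k)$.

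My plan is to execute the same template with $X=\OSigma_x$ and $Y=\OGamma_x$. The one nontrivial adjustment is the first step: Proposition~\ref{proposition:4} computes compactly supported cohomology and does not apply directly to $H^\bullet(\OSigma_x)$, which is why the sequence of canonical maps displayed just above the lemma's statement was constructed. I would verify that this sequence is a chain of isomorphisms, which reduces to checking that $\phi$ is an isomorphism; this holds because $H_c^{2\dim\Sigma-i}(\bar\Sigma_x)$ was shown in Section~\ref{Localization_for_Sigma_without_x} to be a finitely generated free $\Z'$-module, and for such $M$ the canonical map $\Hom_{\Z'}(M,\Z')\otimes_{\Z'}k\to\Hom_k(M\otimes_{\Z'}k,k)$ is an isomorphism. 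The corresponding statement for $\OGamma_x$ is immediate, since $\OGamma_x$ is a finite discrete space and all relevant cohomology groups are concentrated in degree zero and free.

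Leray degeneracy (as in Section~\ref{Localization_for_Sigma_without_x}, applied over both $\Z'$ and $k$) would then yield $H^\bullet_T(\OSigma_x)\otimes_SS_k\ito H^\bullet_T(\OSigma_x,k)$ together with its analogue for $\OGamma_x$. Naturality of these isomorphisms with respect to the $T$-equivariant restriction $\OGamma_x\hookrightarrow\OSigma_x$ produces a commutative square analogous to~(\ref{eq:50}), after which the final assembly is identical to the proof of Lemma~\ref{lemma:15}: the upper path sends $\OX^x\otimes_SS_k$ isomorphically onto $\OX^x(k)$, yielding the required dashed arrow.

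The only step that requires care is the naturality check: one must track the equivariant restriction through both instances of Poincar\'e duality in the displayed sequence, where the functoriality of ordinary cohomology (via pullback) trades off against that of compactly supported cohomology (via $j_!$ for the open inclusion $\OSigma_x\hookrightarrow\Sigma$). Because $\OGamma_x$ is $0$-dimensional, this can ultimately be checked stalkwise, and once the commuting square is in place the rest of the argument is formal and parallels Lemma~\ref{lemma:15} word for word.
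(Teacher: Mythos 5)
Your proposal is correct and follows the same route as the paper: the displayed sequence of canonical maps (Poincar\'e duality, the universal-coefficient map $\phi$, Poincar\'e duality again), the observation that $\phi$ is an isomorphism because $H_c^{2\dim\Sigma-i}(\bar\Sigma_x)$ is finitely generated free over $\Z'$, the corresponding (trivial) statement for the finite discrete space $\OGamma_x$, Leray degeneracy, and then the same ``upper path versus lower path'' assembly as in Lemma~\ref{lemma:15}. One small remark on your final paragraph: the relevant naturality is that of the restriction along the \emph{closed} inclusion $\OGamma_x\hookrightarrow\OSigma_x$ (under Poincar\'e duality this is dual to a Gysin map $H_c^\bullet(\OGamma_x)\to H_c^{\bullet+2r}(\OSigma_x)$), rather than $j_!$ for the open inclusion $\OSigma_x\hookrightarrow\Sigma$; equivalently, and more cleanly, one notes that the composite isomorphism agrees with the canonical coefficient-change natural transformation $H^i(-)\otimes_{\Z'}k\to H^i(-,k)$, so the required square commutes automatically — but this is just tidying a point the paper itself dismisses as ``rather easy to see.''
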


These three lemmas allow us to construct bases of $\X(k)$, $\X^x(k)$, $\bar\X(k)^x$ from the bases of
$\X$, $\X^x$, $\bar\X^x$ given by Theorems~\ref{theorem:4},~\ref{theorem:2},~\ref{theorem:3} respectively.
Moreover, we can construct operators $\Delta$ and $\nabla_t$ on $H^\bullet_T(\Gamma',k)$
similarly to Section~\ref{Copy and concentration} and obtain analogs of Lemmas~\ref{lemma:5} and~\ref{lemma:6}.
%We shall call these and similar analogous results for coefficients $k$ the {\it $k$-versions}.

\subsection{Description of the costalk-to-stalk embedding}\label{Main_construction} From the $T$-equivariant distinguished triangle
$$
i_*i^!\csh{k}{\Sigma}\to\csh{k}{\Sigma}\to j_*j^*\csh{k}{\Sigma}\stackrel{+1}\to,
$$
where $i$ and $j$ are as in Section~\ref{Localization_for_Sigma_without_x}, we get the following exact sequence
\begin{equation}\label{eq:7}
H_T^n(\Sigma_x,i^!\csh{k}{\Sigma})\to H_T^n(\Sigma,k)\to H_T^n(\bar\Sigma_x,k)\to H_T^{n+1}(\Sigma_x,i^!\csh{k}{\Sigma})\to
\end{equation}
We are actually interested in the left map. It would be very convenient if we could prove that its source
$H_T^n(\Sigma_x,i^!\csh{k}{\Sigma})$ vanishes in odd degrees. This is fortunately true, as the sequence
$$
H_T^{2m}(\Sigma,k)\to H_T^{2m}(\bar\Sigma_x,k)\to H_T^{2m+1}(\Sigma_x,i^!\csh{k}{\Sigma})\to H_T^{2m+1}(\Sigma,k)=0
$$
is exact by~(\ref{eq:7}) and the left map is surjective by Theorem~\ref{theorem:3} and Lemmas~\ref{lemma:15}
and~\ref{lemma:17}.

Consider the following commutative diagram with the exact first row:
$$
\begin{tikzcd}
0\arrow{r}&H_T^{2m}(\Sigma_x,i^!\csh{k}\Sigma)\arrow[dashed]{d}{\wr}[swap]{\exists}\arrow{r}&H_T^{2m}(\Sigma,k)\arrow{d}[swap]{\wr}\arrow{r}&H_T^{2m}(\bar\Sigma_x,k)\arrow{r}\arrow{d}[swap]{\wr}&0\\
0\arrow{r}&\ker\phi^{2m}\arrow{r}&\X(k)^{2m}\arrow{r}{\phi^{2m}}&{\bar\X}^x(k)^{2m}\arrow{r}&0
\end{tikzcd}
$$
Here $\phi^{2m}$ is the restriction map $f\mapsto f|_{\OGamma_x}$ and the solid vertical
arrows are induced by embeddings $\Gamma\hookrightarrow\Sigma$ and $\OGamma_x\hookrightarrow\OSigma_x$ respectively.
We thus have proved the following result.

\begin{lemma}\label{lemma:2}
%There exists an identification of $S_k$-modules
%$$
%H_T^\bullet(\Sigma_x,i^!\csh{\Z'}X)\ito\Big\{f\in\X(k)\,\Big|\,f\big|_{\Gamma\setminus\Sigma_x}=0\Big\}=:\X(k)_x
%$$
%such that the natural morphism
%$H_T^\bullet(\Sigma_x,i^!\csh{\Z'}X)\to H_T^\bullet(\Sigma,k)$ becomes an inclusion $\X(k)_x\hookrightarrow\X(k)$,
%when $H_T^\bullet(\Sigma,k)$ is canonically identified with $\X(k)$.
%$\X(k)=\im i_{\Sigma,\Gamma}^*$.
There exists an isomorphism of $S_k$-modules (dashed arrow) such that the following diagram is commutative:
$$
\begin{tikzcd}
H_T^\bullet(\Sigma_x,i^!\csh{k}\Sigma)\arrow[dashed]{d}{\wr}[swap]{\exists}\arrow{r}&H_T^\bullet(\Sigma,k)\arrow{d}{\wr}\\
\X_x(k)\arrow[hook]{r}&\X(k)
\end{tikzcd}
$$
where
$
\X_x(k)=\{f\in\X(k)\suchthat f|_{\OGamma_x}=0\}
$
and the bottom arrow is the natural embedding.
\end{lemma}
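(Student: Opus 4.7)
The plan is to extract the statement directly from the long exact sequence associated to the distinguished triangle $i_*i^!\csh{k}\Sigma \to \csh{k}\Sigma \to j_*j^*\csh{k}\Sigma \xrightarrow{+1}$, which produces the sequence~(\ref{eq:7}). The key preliminary step is to establish vanishing of $H_T^{2m+1}(\Sigma_x, i^!\csh{k}\Sigma)$ so that the long exact sequence breaks into short exact pieces in even degrees. For this I would use that $H_T^{2m+1}(\Sigma,k)=0$ (since $\Sigma$ is equivariantly formal with odd-vanishing ordinary cohomology) together with the surjectivity of $H_T^{2m}(\Sigma,k) \to H_T^{2m}(\bar\Sigma_x,k)$ granted by Theorem~\ref{theorem:3} combined with Lemma~\ref{lemma:17} (to change coefficients to $k$) and Lemma~\ref{lemma:15}. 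Exactness of~(\ref{eq:7}) then forces the odd-degree costalk cohomology to vanish.

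Granted this vanishing, for each $m$ I obtain a short exact sequence
\begin{equation*}
0 \to H_T^{2m}(\Sigma_x,i^!\csh{k}\Sigma) \to H_T^{2m}(\Sigma,k) \to H_T^{2m}(\bar\Sigma_x,k) \to 0.
\end{equation*}
Next I would place this as the top row of a commutative diagram whose bottom row is the tautological short exact sequence
\begin{equation*}
0 \to \ker \phi^{2m} \to \X(k)^{2m} \xrightarrow{\phi^{2m}} \bar\X^x(k)^{2m} \to 0,
\end{equation*}
where $\phi^{2m}$ is restriction $f \mapsto f|_{\OGamma_x}$. The two right-hand vertical isomorphisms are furnished by Lemmas~\ref{lemma:15} and~\ref{lemma:17}, and commutativity of the right square is immediate from the fact that these localization isomorphisms are defined by restriction to $T$-fixed points and therefore compatible with the geometric restriction from $\Sigma$ to $\bar\Sigma_x$ (both come from the same embedding of fixed points).

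With both rows exact and the two rightmost verticals isomorphisms, the snake lemma (or the five lemma applied with zero modules on the left) supplies the dashed vertical arrow and shows it is an isomorphism of $S_k$-modules. Summing over $m$ (and using odd-degree vanishing to handle the graded structure) yields the isomorphism of graded $S_k$-modules claimed in the statement. I do not anticipate a genuine obstacle here; the only point requiring slight care is verifying that the identification of $H_T^\bullet(\Sigma,k)$ with $\X(k)$ via Lemma~\ref{lemma:15} sends the geometric map $H_T^\bullet(\Sigma,k) \to H_T^\bullet(\bar\Sigma_x,k)$ to the combinatorial restriction $\phi$, which follows tautologically from the definition of both sides as images of restriction to $T$-fixed points.
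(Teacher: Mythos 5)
Your proof is correct and follows essentially the same route as the paper: the distinguished triangle and resulting exact sequence~(\ref{eq:7}), the odd-degree vanishing of $H_T^{\bullet}(\Sigma_x,i^!\csh{k}\Sigma)$ via the surjectivity from Theorem~\ref{theorem:3} together with Lemmas~\ref{lemma:15} and~\ref{lemma:17}, and then the two-row commutative diagram with the right two verticals being the fixed-point localization isomorphisms, from which the dashed arrow and its bijectivity follow. The only implicit step, which you correctly flag as tautological, is that $\ker\phi^{2m}$ coincides with $\X_x(k)^{2m}$ by the very definition of $\X_x(k)$.
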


\begin{corollary}\label{corollary:5}
The functor $H^\bullet(\Sigma_x,\_)$ applied to the natural morphism $i^!\csh{k}\Sigma\to i^*\csh{k}\Sigma$,
where $i:\Sigma_x\hookrightarrow\Sigma$ is the embedding,
yields a map isomorphic to the embedding $\X_x(k)\hookrightarrow\X^x(k)$.
\end{corollary}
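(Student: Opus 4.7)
The plan is to factor the natural morphism $i^!\csh k\Sigma\to i^*\csh k\Sigma$ through the structure sheaf $\csh k\Sigma$ by combining the counit of the $(i_*,i^!)$ adjunction with the unit of the $(i^*,i_*)$ adjunction, and then to invoke the identifications already established in the paper. Concretely, for a closed embedding $i$, the natural morphism $i^!\to i^*$ is obtained by applying $i^*$ to the counit $\epsilon:i_*i^!\csh k\Sigma\to\csh k\Sigma$ (using $i^*i_*\simeq\mathrm{id}$). Dually, because $i_*$ is fully faithful on the subcategory of pushforwards from $\Sigma_x$, the image $i_*(\mathrm{nat})$ coincides with the composition $\eta\circ\epsilon:i_*i^!\csh k\Sigma\to\csh k\Sigma\to i_*i^*\csh k\Sigma$, where $\eta$ is the unit. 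The verification is a short diagram chase using the triangle identities together with $i^*i_*\simeq\mathrm{id}$, noting that $i^*(\eta\circ\epsilon)=i^*\eta\circ i^*\epsilon$ reduces to $i^*\epsilon$ on the nose.

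Applying $H_T^\bullet(\Sigma,-)$ to this factorization and using $H_T^\bullet(\Sigma,i_*\G)=H_T^\bullet(\Sigma_x,\G)$, the map of the corollary becomes the composition
$$H_T^\bullet(\Sigma_x,i^!\csh k\Sigma)\xrightarrow{\alpha}H_T^\bullet(\Sigma,k)\xrightarrow{\beta}H_T^\bullet(\Sigma_x,k).$$
By Lemma~\ref{lemma:2}, $\alpha$ is identified with the natural inclusion $\X_x(k)\hookrightarrow\X(k)$. The map $\beta$ is ordinary restriction from $\Sigma$ to $\Sigma_x$; by naturality of restriction to $T$-fixed points, the commutative square
\begin{equation*}
\begin{tikzcd}
H_T^\bullet(\Sigma,k)\arrow{r}{\beta}\arrow{d}&H_T^\bullet(\Sigma_x,k)\arrow{d}\\
H_T^\bullet(\Gamma,k)\arrow{r}&H_T^\bullet(\Gamma_x,k)
\end{tikzcd}
\end{equation*}
whose vertical arrows are the embeddings of Lemmas~\ref{lemma:15} and~\ref{lemma:16} (with images $\X(k)$ and $\X^x(k)$) shows that $\beta$ corresponds to the literal restriction $\X(k)\to\X^x(k)$, $f\mapsto f|_{\Gamma_x}$.

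Composing, $\beta\circ\alpha$ sends $f\in\X_x(k)\subset\X(k)$ to $f|_{\Gamma_x}\in\X^x(k)$. This map is injective because any $f\in\X_x(k)$ already vanishes on $\OGamma_x$ by definition, so it is completely determined by its restriction to $\Gamma_x$; hence it is precisely the natural embedding $\X_x(k)\hookrightarrow\X^x(k)$ asserted in the corollary. The only delicate step is the categorical identification $i_*(\mathrm{nat})=\eta\circ\epsilon$; everything else is a formal diagram chase combining Lemma~\ref{lemma:2} with Lemmas~\ref{lemma:15} and~\ref{lemma:16}.
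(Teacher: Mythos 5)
Your proof is correct and follows the same route the paper intends: Corollary~\ref{corollary:5} is stated without a separate proof precisely because it is meant to be read off from Lemma~\ref{lemma:2} together with the standard factorization of the costalk-to-stalk morphism $i^!\csh k\Sigma\to i^*\csh k\Sigma$ through $\csh k\Sigma$ (counit then unit) and the compatibility of $\X(k)\to\X^x(k)$ with the geometric restriction (Lemmas~\ref{lemma:15} and~\ref{lemma:16}). The categorical check you perform is not really "delicate" — it is essentially the definition of the natural map $i^!\to i^*$ for a closed embedding — but it is correct and matches the intended argument.
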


It remains to discuss the behavior of $\X_x(k)$ with respect to the change of the ring of coefficients $k$.
By the remark at the end of Section~\ref{Basis_for_barX}, we have $\{\c_\gamma^x\suchthat\gamma\in\Gamma\}=B_{\xi_r(x)}$.
Thus we can write $B_{\xi_r(x)}=\{b_1,\ldots,b_m,b_{m+1},\ldots,b_n\}$
so that $\{b_1|_{\bar\Gamma_x},\ldots,b_m|_{\bar\Gamma_x}\}$ is a basis of $\bar\X^x$.
We have the following decompositions $b_j|_{\bar\Gamma_x}=\sum_{i=1}^mc_{i,j}b_i|_{\bar\Gamma_x}$ for some (homogeneous) $c_{i,j}\in S$.
Let $u=\sum_{i=1}^nx_ib_i$, where $x_i\in S$, be an arbitrary element of $\X$. We get
$$
u|_{\bar\Gamma_x}=\sum_{i=1}^nx_ib_i|_{\bar\Gamma_x}=\sum_{i=1}^mx_ib_i|_{\bar\Gamma_x}+\sum_{j=m+1}^nx_j\sum_{i=1}^mc_{i,j}b_i|_{\bar\Gamma_x}
=\sum_{i=1}^m\(x_i+\sum_{j=m+1}^nc_{i,j}x_j\)b_i|_{\bar\Gamma_x}.
$$
Hence $\X_x=\X_x(\Z')$ is a free $S$-module with basis $\big\{-\sum_{i=1}^mc_{i,j}b_i+b_j\big\}_{j=m+1}^n$.
Arguing similarly, we get that $\X_x(k)$ is a free $S_k$-module with basis
$\big\{-\sum_{i=1}^m(c_{i,j}\otimes 1_k)(b_i\otimes 1_k)+b_j\otimes 1_k\}_{j=m+1}^n$.

\begin{lemma}\label{lemma:18}
%The natural isomorphism $H^\bullet_T(\Gamma)\otimes_S S_k\ito H^\bullet(\Gamma,k)$ induces
%the isomorphism $\X_x\otimes_SS_k\ito\X_x(k)$.
%
%The natural isomorphism $H^\bullet_T(\OGamma_x)\otimes_S S_k\ito H^\bullet(\OGamma_x,k)$ induces
%the isomorphism $\bar\X^x\otimes_SS_k\ito\bar\X^x(k)$.
There exists an isomorphism of $S_k$-modules (dashed arrow) such that the following diagram is commutative:
$$
\begin{tikzcd}
\X_x\otimes_SS_k\arrow[dashed]{r}{\exists}[swap]{\sim}\arrow[hook]{d}&\X_x(k)\arrow[hook]{d}\\
H^\bullet_T(\Gamma)\otimes_S S_k\arrow{r}[swap]{\sim}&H_T^\bullet(\Gamma,k)
\end{tikzcd}
$$
%Moreover, the $S$-modules $\X_x$ is free and if $\{b_i\}_{i\in I}$ is its basis, then
%$\{\sigma\circ b_i\}_{i\in I}$ is a basis of $\X_x(k)$.
\end{lemma}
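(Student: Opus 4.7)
The plan is to define the dashed arrow as the composition
$$
\X_x \otimes_S S_k \hookrightarrow \X \otimes_S S_k \ito \X(k),
$$
where the first map is induced by the natural inclusion $\X_x \hookrightarrow \X$ and the second is the isomorphism supplied by Lemma~\ref{lemma:15}; then I would show that its image coincides with $\X_x(k) \subset \X(k)$, establishing the isomorphism, and read off commutativity from Lemma~\ref{lemma:15}.

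For the factorization through $\X_x(k)$: given $f \in \X_x$ and $s \in S_k$, the image of $f \otimes s$ in $\X(k) \subset H^\bullet_T(\Gamma,k)$ restricts on $\OGamma_x$ to the image of $f|_{\OGamma_x} \otimes s = 0$ under the base-change isomorphism $H^\bullet_T(\OGamma_x) \otimes_S S_k \ito H^\bullet_T(\OGamma_x,k)$. Hence the image vanishes on $\OGamma_x$ and so lies in $\X_x(k)$.

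For bijectivity onto $\X_x(k)$, I would appeal directly to the two explicit bases constructed immediately before the lemma statement: the $S$-basis $\{-\sum_{i=1}^m c_{i,j} b_i + b_j\}_{j=m+1}^n$ of $\X_x$ is carried under the composition to $\{-\sum_{i=1}^m (c_{i,j} \otimes 1_k)(b_i \otimes 1_k) + b_j \otimes 1_k\}_{j=m+1}^n$, which was identified there as an $S_k$-basis of $\X_x(k)$. That these really are bases on both sides uses Lemma~\ref{lemma:15} (so that $\{b_i \otimes 1_k\}_{i=1}^n$ is an $S_k$-basis of $\X(k)$) together with Lemma~\ref{lemma:17} (so that the decomposition constants of $b_j|_{\bar\Gamma_x}$ in terms of $b_1|_{\bar\Gamma_x},\ldots,b_m|_{\bar\Gamma_x}$ transport literally as $c_{i,j} \otimes 1_k$ after base change to $k$). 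Since basis goes to basis, the map is an $S_k$-module isomorphism, and commutativity of the displayed square follows by restriction from the commutative square of Lemma~\ref{lemma:15}.

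The main (and only genuine) point of care is this compatibility of the decomposition coefficients $c_{i,j}$ with the change of scalars; it is exactly for this that Lemmas~\ref{lemma:15} and~\ref{lemma:17} are essential, since all other assertions reduce to routine tensor-product bookkeeping once the two bases are known to correspond.
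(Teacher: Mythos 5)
Your proposal is correct and matches the paper's approach: the paper likewise derives Lemma~\ref{lemma:18} from the explicit bases $\big\{-\sum_{i=1}^m c_{i,j}b_i+b_j\big\}_{j=m+1}^n$ of $\X_x$ and $\big\{-\sum_{i=1}^m(c_{i,j}\otimes 1_k)(b_i\otimes 1_k)+b_j\otimes 1_k\big\}_{j=m+1}^n$ of $\X_x(k)$, which are constructed in the discussion immediately preceding the lemma using Lemmas~\ref{lemma:15} and~\ref{lemma:17}. You correctly identified the one real subtlety (that the decomposition constants $c_{i,j}$ transport as $c_{i,j}\otimes 1_k$, which is where Lemma~\ref{lemma:17} is essential), which the paper dispatches with an ``arguing similarly.''
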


\subsection{Description of $\X_x(k)$} We are going to describe this module via the dual of $\X^x(k)$. This is a well known description due to
Fiebig~\cite[Lemmas 6.8, 6.9 and 6.13]{Fiebig}. We present here an alternative proof that does not require invertibility of $2$ in $k$.
Let
$$
D\X^x(k)=\{g\in\Map(\Gamma_x,Q_k)\suchthat(g,f)\in S_k\text{ for any }f\in\X^x(k)\},
$$
where $Q_k$ is the ring of quotients of $S_k$ and $(g,f)=\sum_{\gamma\in\Gamma_x}g_\gamma f_\gamma$ (the standard scalar product).
It will be convenient, for example, in Lemma~\ref{lemma:20} to identify elements of $D\X^x(k)$ with their extensions by zero to $\Gamma$.

Consider $P_k\in H_T^\bullet(\Gamma,k)$ defined by
$$
P_k(\gamma)=\prod_{i=1}^r\bbeta_i(\gamma)\otimes1_k=\pm\prod_{\alpha\in \Phi^+}(\alpha\otimes1_k)^{|M_\alpha(\gamma)|}.
$$
Note that any $P_k(\gamma)$ is divisible in $S_k$ by the Euler class $e_x(k)=\prod_{\alpha\in \Phi^+,s_\alpha x<x}\alpha\otimes1_k$
(see for example~\cite[Lemma 4.9.7]{Shchigolev}).

\begin{lemma}\label{lemma:20}
$\X_x(k)=P_kD\X^x(k)$.
\end{lemma}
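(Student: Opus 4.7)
The plan is to establish both inclusions using the Poincar\'e pairing
\[
\langle u,v\rangle:=\sum_{\gamma\in\Gamma}\frac{u(\gamma)v(\gamma)}{P_k(\gamma)}
\]
on $\Map(\Gamma,Q_k)$. The key auxiliary input is the \emph{self-duality} of $\X(k)$: every $u\in\Map(\Gamma,Q_k)$ with $\langle u,h\rangle\in S_k$ for all $h\in\X(k)$ already lies in $\X(k)$.

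For the inclusion $\X_x(k)\subset P_kD\X^x(k)$, take $f\in\X_x(k)$. For any $h\in\X(k)$, lifting both to $H^\bullet_T(\Sigma,k)$ and pushing their product forward along the compact map $\Sigma\to\pt$ gives an element of $S_k$ which, by the Atiyah--Bott localization theorem (available via Theorem~\ref{theorem:1}), equals $\langle f,h\rangle$. Since $f$ is supported on $\Gamma_x$, this reduces to $(f/P_k|_{\Gamma_x},h|_{\Gamma_x})$, and since $h|_{\Gamma_x}$ exhausts $\X^x(k)$ by Theorem~\ref{theorem:2} and Lemma~\ref{lemma:16}, we obtain $f/P_k|_{\Gamma_x}\in D\X^x(k)$. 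For the reverse inclusion, given $g\in D\X^x(k)$ extended by zero, set $f:=P_k\cdot g$; then $\langle f,h\rangle=(g,h|_{\Gamma_x})\in S_k$ for every $h\in\X(k)$ by the definition of $D\X^x(k)$, so self-duality gives $f\in\X(k)$ and support considerations place it in $\X_x(k)$---this simultaneously establishes the a priori nontrivial divisibility $P_k(\gamma)g(\gamma)\in S_k$ as a by-product.

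The main obstacle is the self-duality itself, which is essentially Fiebig's~\cite[Lemma~6.8]{Fiebig} but whose original proof inverts~$2$. I would re-establish it constructively using the bases of Theorem~\ref{theorem:4}: inductively via the operators $\Delta$, $\nabla_t$ and $\widetilde\nabla_t$ from Section~\ref{Copy and concentration}, show that the Gram matrix of $B_\rho$ under $\langle\cdot,\cdot\rangle$ has determinant a unit in $S_k$. This produces an $S_k$-basis $\{b^\vee\}_{b\in B_\rho}$ of $\X(k)$ dual to $B_\rho$, whence any $u$ with $\langle u,h\rangle\in S_k$ for all $h\in\X(k)$ expands as $u=\sum_{b\in B_\rho}\langle u,b\rangle\,b^\vee\in\X(k)$.
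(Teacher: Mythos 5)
Your proof is correct in outline but takes a genuinely different route from the paper, and it hinges on an auxiliary self-duality statement that you only sketch.

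The paper avoids the pairing entirely: it proves $P_kD\X^x(k)\subset\X_x(k)$ by verifying H\"arterich's criterion (Proposition~\ref{proposition:1}) for $P_kg$ directly, which forces a nontrivial detour --- the natural verification produces a factor $2^{|M_\alpha(\gamma)|-|J_\alpha(\gamma)|}$, so the paper constructs an auxiliary element $q^\alpha_\gamma\in\X$ out of $\Delta$ and $\nabla_t$ and shows $q^\alpha_\gamma=2^{|M_\alpha(\gamma)|-|J_\alpha(\gamma)|}p^\alpha_\gamma$ to circumvent division; the reverse inclusion $\X_x\subset PD\X^x$ is a triangularity argument over $(\Gamma_x,<)$ using H\"arterich again for the divisibility $\alpha^{|M_\alpha|-|D_\alpha|}\mid f(\gamma)$, the dual basis $\hat\b_\gamma$, and then a change-of-rings step. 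You instead fold both inclusions into a single statement --- that $\X(k)$ equals its own dual lattice under $\langle\cdot,\cdot\rangle$ --- and deduce the lemma in two lines, picking up the divisibility $P_k(\gamma)g(\gamma)\in S_k$ (which the paper proves as a separate preliminary induction) for free. That is a cleaner packaging, and it is probably the ``right'' conceptual reason the lemma holds. The cost is that the self-duality over a field of characteristic $2$ (or over $\Z'$) is itself nontrivial, and you have traded the paper's $q^\alpha_\gamma$ calculation for the Gram-matrix calculation.

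Your sketch of the Gram-matrix induction has the right shape: the block decomposition $B_\rho=\Delta(B_{\rho'_0})\cup\nabla_{\rho_\emptyset}(B_{\rho'_1})$ combined with $\bbeta_r(\gamma'\cdot s_r)=-\bbeta_r(\gamma'\cdot e)$ gives $\langle\Delta f',\Delta h'\rangle=0$ and $\langle\Delta f',\nabla_t h'\rangle=\langle f',h'\rangle'$, so the Gram matrix is block anti-triangular $\begin{pmatrix}0&A\\A^T&B\end{pmatrix}$ with $A$ the mixed Gram matrix of the two bases $B_{\rho'_0}$, $B_{\rho'_1}$ of $\X'(k)$, hence of unit determinant by induction and the determinant is $\pm(\det A)^2$. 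Two points still need to be said out loud if this is to replace the paper's proof. First, the $(2,2)$-block entries $\sum_{\gamma'}\bbeta_r(\gamma'\cdot t)f'(\gamma')h'(\gamma')/P'_k(\gamma')$ must lie in $S_k$; this requires observing that $\gamma'\mapsto\bbeta_r(\gamma'\cdot t)=\pm\gamma'^{r-1}(\alpha_r)$ is itself an element of $\X'(k)$ (the restriction of an equivariant Chern class pulled back from the $(r{-}1)$-st copy of $G/B$) before one can invoke the inductive claim that $\langle\cdot,\cdot\rangle'$ is $S_k$-valued on $\X'(k)$. Second, the invocation of the Atiyah--Bott integration formula over $k$ for the forward inclusion is unnecessary and sits awkwardly with the paper's stated aim of avoiding such input for non-$\Q$ coefficients: the only thing you need there is that $\langle f,h\rangle\in S_k$ for $f,h\in\X(k)$, which is exactly the statement that the Gram matrix is $S_k$-valued --- already part of the induction. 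Dropping the ABBV reference and deriving this directly from the Gram-matrix induction would make the argument self-contained.
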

\begin{proof} First we prove by induction on $r$ that $P_kg\in H_T^\bullet(\Gamma_x,k)$ for any $g\in D\X^x(k)$.
This is clear for $r=0$, so we assume that $r>0$. Let $t\in\{e,s_r\}$. By Lemma~\ref{lemma:6}, we get
$$
S_k\ni\big(g,\nabla_tf'\big|_{\Gamma_x}\big)=\sum_{\gamma'\in\Gamma'_{xt}}g(\gamma'\cdot t)\bbeta_r(\gamma'\cdot t)f'(\gamma')
$$
for any $f'\in\X'(k)$. Hence the function
$$
g'(\gamma')=\bbeta_r(\gamma'\cdot t)g(\gamma'\cdot t)
$$
where $\gamma'\in\Gamma'_{xt}$, belongs to $D(\X')^{xt}(k)$.
By the inductive hypothesis, the product $P'_kg'$ has values in $S_k$:
$$
S_k\ni P'(\gamma')g'(\gamma')=\(\prod_{i=1}^{r-1}\bbeta_i(\gamma')\)\bbeta_r(\gamma'\cdot t)g(\gamma'\cdot t)=P(\gamma'\cdot t)g(\gamma'\cdot t).
$$
As $t$ is arbitrary, the function $P_kg$ has values in $S_k$.

Now we are going to prove the lemma $k=\Z'$. In this case, we denote $P=P_{\Z'}$ and $D\X^x=D\X^x(\Z')$.
We apply induction on $r$, the result being obvious for $r=0$. Assume that $r>0$.

Let us prove that $Pg\in\X_x$ for $g\in D\X^x$. We actually must prove that the extension by zero of $Pg$ to $\Gamma$
belongs to $\X$, which by Proposition~\ref{proposition:1} is equivalent to checking that
$$
\sum_{\delta\in\Gamma_x,\delta\sim_\alpha\gamma,J_\alpha(\delta)\subset J_\alpha(\gamma)}(-1)^{|J_\alpha(\delta)|}P(\delta)g(\delta)\=0\pmod{\alpha^{|J_\alpha(\gamma)|}}
$$
for any $\alpha\in \Phi^+$ and $\gamma\in\Gamma$.
In this summation, $P(\delta)$ is clearly divisible by $\alpha^{|M_\alpha(\delta)|}=\alpha^{|M_\alpha(\gamma)|}$.
Hence it also divisible by $\alpha^{|J_\alpha(\gamma)|}$.
Therefore it remains to prove that the function
$$
p^\alpha_\gamma(\delta)=\left\{
\begin{array}{ll}
(-1)^{|J_\alpha(\delta)|}P(\delta)/\alpha^{|J_\alpha(\gamma)|}&\text{if }\delta\sim_\alpha\gamma\text{ and }J_\alpha(\delta)\subset J_\alpha(\gamma);\\
0&\text{otherwise}
\end{array}
\right.
$$
where $\delta\in\Gamma$, belongs to $\X$. It is rather difficult to prove it directly, applying Proposition~\ref{proposition:1}.
Therefore, we define the following function $q_\gamma^\alpha$ by induction: $q_\emptyset^\alpha=1$ and
$$
q^\alpha_\gamma=
\left\{
\begin{array}{ll}
\nabla_{\gamma_r}q^\alpha_{\gamma'}&\text{ if }r\notin M_\alpha(\gamma);\\[6pt]
\nabla_{\gamma_r} q^\alpha_{\gamma'}+\nabla_{\gamma_rs_r} q^\alpha_{\gamma'}-\alpha\Delta q^\alpha_{\gamma'}&\text{ if }r\in M_\alpha(\gamma)\setminus J_\alpha(\gamma)\\[6pt]
-\Delta q^\alpha_{\gamma'}&\text{ if }r\notin J_\alpha(\gamma).
\end{array}
\right.
$$
if $r>0$. By Lemmas~\ref{lemma:5} and~\ref{lemma:6}, we get $q^\alpha_{\gamma}\in\X$.
Therefore, it suffices to prove that
$$
2^{|M_\alpha(\gamma)|-|J_\alpha(\gamma)|}p^\alpha_\gamma=q^\alpha_\gamma.
$$
This formula is obvious for $r=0$. Therefore, we consider the case $r>0$ and apply induction.

{\it Case 1: $r\notin M_\alpha(\gamma)$.} If $\delta\not\sim_\alpha\gamma$, then either $\delta_r\ne\gamma_r$ or
$\delta'\not\sim_\alpha\gamma'$. In both cases, $q^\alpha_\gamma(\delta)=\nabla_{\gamma_r}q^\alpha_{\gamma'}(\delta)=0$.
Now assume that $\delta\sim_\alpha\gamma$. Then $\delta_r=\gamma_r$, $\delta'\sim_\alpha\gamma'$,
$J_\alpha(\delta)=J_\alpha(\delta')$, $J_\alpha(\gamma)=J_\alpha(\gamma')$, $M_\alpha(\gamma)=M_\alpha(\gamma')$.
If $J_\alpha(\delta)\not\subset J_\alpha(\gamma)$, then $J_\alpha(\delta')\not\subset J_\alpha(\gamma')$ and we get
$$
q^\alpha_{\gamma}(\delta)=\nabla_{\gamma_r}q^\alpha_{\gamma'}(\delta)=\bbeta_r(\delta)q^\alpha_{\gamma'}(\delta')=2^{|M_\alpha(\gamma')|-|J_\alpha(\gamma')|}\bbeta_r(\delta)p^\alpha_{\gamma'}(\delta')=0.
$$
If $J_\alpha(\delta)\subset J_\alpha(\gamma)$, then $J_\alpha(\delta')\subset J_\alpha(\gamma')$ and we get
\begin{multline*}
q^\alpha_\gamma(\delta)=\nabla_{\gamma_r}q^\alpha_{\gamma'}(\delta)=\bbeta_r(\delta)q^\alpha_{\gamma'}(\delta')=
2^{|M_\alpha(\gamma')|-|J_\alpha(\gamma')|}\bbeta_r(\delta)p^\alpha_{\gamma'}(\delta')\\[6pt]
\shoveleft{=2^{|M_\alpha(\gamma)|-|J_\alpha(\gamma)|}(-1)^{|J_\alpha(\delta)|}\bbeta_r(\delta)P(\delta')/\alpha^{|J_\alpha(\gamma)|}}\\[6pt]
=2^{|M_\alpha(\gamma)|-|J_\alpha(\gamma)|}(-1)^{|J_\alpha(\delta)|}P(\delta)/\alpha^{|J_\alpha(\gamma)|}
=2^{|M_\alpha(\gamma)|-|J_\alpha(\gamma)|}p^\alpha_\gamma(\delta).
\end{multline*}

{\it Case 2: $r\in M_\alpha(\gamma)\setminus J_\alpha(\gamma)$.} If $\delta\not\sim_\alpha\gamma$, then
$\delta'\not\sim_\alpha\gamma'$. In this case, $q^\alpha_\gamma(\delta)=0$, as $q^\alpha_{\gamma'}(\delta')=2^{|M_\alpha(\gamma')|-|J_\alpha(\gamma')|}p^\alpha_{\gamma'}(\delta')=0$.
If $J_\alpha(\delta)\not\subset J_\alpha(\gamma)$, then either $r\in J_\alpha(\delta)$ or
$J_\alpha(\delta')\not\subset J_\alpha(\gamma')$. In the former case, we get $\bbeta_r(\delta)=\alpha$ and
$$
q^\alpha_\gamma(\delta)= \nabla_{\gamma_r}q^\alpha_{\gamma'}(\delta)+\nabla_{\gamma_rs_r}q^\alpha_{\gamma'}(\delta)
-\alpha\Delta q^\alpha_{\gamma'}(\delta)
=\bbeta_r(\delta)q^\alpha_{\gamma'}(\delta')-\alpha q^\alpha_{\gamma'}(\delta')=0.
$$
In the latter case, we get $q^\alpha_{\gamma'}(\delta')=2^{|M_\alpha(\gamma')|-|J_\alpha(\gamma')|}p^\alpha_{\gamma'}(\delta')=0$,
whence $q^\alpha_\gamma(\delta)=0$.

Now suppose that $\delta\sim_\alpha\gamma$ and $J_\alpha(\delta)\subset J_\alpha(\gamma)$.
Then $\delta'\sim_\alpha\gamma'$, $J_\alpha(\delta')\subset J_\alpha(\gamma')$ and $r\notin J_\alpha(\delta)$.
It follows from the last formula that $\bbeta_r(\delta)=-\alpha$. Hence we get
\begin{multline*}
q^\alpha_\gamma(\delta)=\nabla_{\gamma_r}q^\alpha_{\gamma'}(\delta)+\nabla_{\gamma_rs_r}q^\alpha_{\gamma'}(\delta)
-\alpha\Delta q^\alpha_{\gamma'}(\delta)
=\bbeta_r(\delta)q^\alpha_{\gamma'}(\delta')-\alpha q^\alpha_{\gamma'}(\delta')\\
=-2\alpha q^\alpha_{\gamma'}(\delta')
=2^{|M_\alpha(\gamma')|-|J_\alpha(\gamma')|+1}(-1)^{|J_\alpha(\delta')|}(-\alpha)P(\delta')/\alpha^{|J_\alpha(\gamma')|}\\
=2^{|M_\alpha(\gamma)|-|J_\alpha(\gamma)|}(-1)^{|J_\alpha(\delta)|}P(\delta)/\alpha^{|J_\alpha(\gamma)|}
=2^{|M_\alpha(\gamma)|-|J_\alpha(\gamma)|}p^\alpha_\gamma(\delta).
\end{multline*}

{\it Case 3: $r\in J_\alpha(\gamma)$.} If $\delta\not\sim_\alpha\gamma$, then
$\delta'\not\sim_\alpha\gamma'$. In this case, $q^\alpha_\gamma(\delta)=0$, as $q^\alpha_{\gamma'}(\delta')=2^{|M_\alpha(\gamma')|-|J_\alpha(\gamma')|}p^\alpha_{\gamma'}(\delta')=0$.
If $J_\alpha(\delta)\not\subset J_\alpha(\gamma)$, then $J_\alpha(\delta')\not\subset J_\alpha(\gamma')$
and we again get $q^\alpha_\gamma(\delta)=0$, as $q^\alpha_{\gamma'}(\delta')=2^{|M_\alpha(\gamma')|-|J_\alpha(\gamma')|}p^\alpha_{\gamma'}(\delta')=0$.

Now suppose that $\delta\sim_\alpha\gamma$ and $J_\alpha(\delta)\subset J_\alpha(\gamma)$.
Then $\delta'\sim_\alpha\gamma'$ and $J_\alpha(\delta')\subset J_\alpha(\gamma')$.

If $r\notin J_\alpha(\delta)$, then $\bbeta_r(\delta)=-\alpha$ and
\begin{multline*}
q^\alpha_\gamma(\delta)=-\Delta q^\alpha_{\gamma'}(\delta)=-q^\alpha_{\gamma'}(\delta')
=-2^{|M_\alpha(\gamma')|-|J_\alpha(\gamma')|}p^\alpha_{\gamma'}(\delta')\\[6pt]
=-2^{|M_\alpha(\gamma)|-|J_\alpha(\gamma)|}(-1)^{|J_\alpha(\delta')|}P(\delta')/\alpha^{|J_\alpha(\gamma')|}
=2^{|M_\alpha(\gamma)|-|J_\alpha(\gamma)|}(-1)^{|J_\alpha(\delta')|}(-\alpha)P(\delta')/\alpha^{|J_\alpha(\gamma)|}\\[6pt]
=2^{|M_\alpha(\gamma)|-|J_\alpha(\gamma)|}(-1)^{|J_\alpha(\delta)|}P(\delta)/\alpha^{|J_\alpha(\gamma)|}
=2^{|M_\alpha(\gamma)|-|J_\alpha(\gamma)|}p^\alpha_{\gamma}(\delta).
\end{multline*}

If $r\in J_\alpha(\delta)$, then $\bbeta_r(\delta)=\alpha$ and
\begin{multline*}
q^\alpha_\gamma(\delta)=-\Delta q^\alpha_{\gamma'}(\delta)=-q^\alpha_{\gamma'}(\delta')
=-2^{|M_\alpha(\gamma')|-|J_\alpha(\gamma')|}p^\alpha_{\gamma'}(\delta')\\[6pt]
=-2^{|M_\alpha(\gamma)|-|J_\alpha(\gamma)|}(-1)^{|J_\alpha(\delta')|}P(\delta')/\alpha^{|J_\alpha(\gamma')|}
=2^{|M_\alpha(\gamma)|-|J_\alpha(\gamma)|}(-1)^{|J_\alpha(\delta)|}\alpha P(\delta')/\alpha^{|J_\alpha(\gamma)|}\\[6pt]
=2^{|M_\alpha(\gamma)|-|J_\alpha(\gamma)|}(-1)^{|J_\alpha(\delta)|}P(\delta)/\alpha^{|J_\alpha(\gamma)|}
=2^{|M_\alpha(\gamma)|-|J_\alpha(\gamma)|}p^\alpha_\gamma(\delta).
\end{multline*}

Finally, we prove the inverse inclusion. Let $f\in\X_x$. We apply induction on the cardinality of the following
set (lower closure of the support of $f$):
$$
\widehat C(f)=\{\delta\in\Gamma_x\suchthat\text{ there exists }\gamma\in\Gamma_x\text{ such that }\delta\le\gamma\text{ and }f(\gamma)\ne0\}.
$$
If $\widehat C(f)=\emptyset$, then $f=0$ and the result follows.
Suppose now that $\widehat C(f)\ne\emptyset$ and let $\gamma$ be its element maximal with respect to $<$.
Let $\alpha$ be a positive root.

First suppose that $s_\alpha x>x$. In this case, $|J_\alpha(\gamma)|=|D_\alpha(\gamma)|$.
From~\cite[Theorem~6.2(3)]{Haerterich}, we get
$$
\sum_{\delta\in\Gamma_x,\delta\sim_\alpha\gamma,J_\alpha(\gamma)\subset J_\alpha(\delta)}(-1)^{|J_\alpha(\delta)|}f(\delta)\=0\pmod{\alpha^{|M_\alpha(\gamma)|-|J_\alpha(\gamma)|}}.
$$
Proposition~\ref{lemma:4} and~(\ref{eq:39}) imply that $\delta\ge\gamma$ for any $\delta$ in the above summation.
Thus $f(\gamma)$ is divisible by $\alpha^{|M_\alpha(\gamma)|-|J_\alpha(\gamma)|}=\alpha^{|M_\alpha(\gamma)|-|D_\alpha(\gamma)|}$.

Now suppose that $s_\alpha x<x$. In this case, $|J_\alpha(\gamma)|=|D_\alpha(\gamma)|+1$.
Let $j$ be the greatest element of $M_\alpha(\gamma)$. Note that $j$ is also the greatest element of $J_\alpha(\gamma)$.
Let $\widetilde\gamma$ be obtained from $\gamma$ by replacing $\gamma_j$ with $\gamma_js_j$. We clearly have
$\pi(\widetilde\gamma)=s_\alpha x$, $\widetilde\gamma\sim_\alpha\gamma$ and
$J_\alpha(\gamma)=J_\alpha(\widetilde\gamma)\sqcup\{j\}$, whence
$|J_\alpha(\widetilde\gamma)|=|J_\alpha(\gamma)|-1=|D_\alpha(\gamma)|$.
From~\cite[Theorem~6.2(3)]{Haerterich}, we get
$$
\sum_{\delta\in\Gamma_x,\delta\sim_\alpha\gamma,J_\alpha(\widetilde\gamma)\subset J_\alpha(\delta)}(-1)^{|J_\alpha(\delta)|}f(\delta)\=0\pmod{\alpha^{|M_\alpha(\gamma)|-|D_\alpha(\gamma)|}}.
$$
As $j\in J_\alpha(\delta)$ for any $\delta$ in the above summation, we can replace there the condition
$J_\alpha(\widetilde\gamma)\subset J_\alpha(\delta)$ with $J_\alpha(\gamma)\subset J_\alpha(\delta)$.
Hence again $\delta\ge\gamma$ in the above summation and $f(\gamma)$ is divisible by $\alpha^{|M_\alpha(\gamma)|-|D_\alpha(\gamma)|}$.

As a result, we get that $f(\gamma)$ is divisible by
$$
\prod_{\alpha\in \Phi^+}\alpha^{|M_\alpha(\gamma)|-|D_\alpha(\gamma)|}=\pm\frac{P(\gamma)}{\aa(\gamma)}.
$$
It follows from Theorem~\ref{theorem:2} that $D\X^x$ has an $S$-basis $\{\hat\b_\gamma\}_{\gamma\in\Gamma_x}$ such that
$\hat\b_\gamma(\gamma)=1/\aa(\gamma)$ and $\hat\b_\gamma(\delta)=0$ for $\delta\in\Gamma_x$ with $\delta>\gamma$.
To get this basis, one should invert and transpose the matrix given by~(\ref{eq:40}).

Consider the difference $h=f-f(\gamma)/(P(\gamma)/\aa(\gamma))P\hat\b_\gamma$.
We get $C(h)\subset\{\delta\in\Gamma_x\suchthat\delta<\gamma\}$ $\varsubsetneq C(f)$.
By induction, $h$ belongs to the $PD\X^x$. Thus so does~$f$.

Finally, let us return to the general case.
%By Lemma~\ref{lemma:16},
%we get that $D\X^x\otimes_Q Q_k\ito D\X^x(k)$ under the natural isomorphism $\Map(\Gamma_x,Q)\otimes_QQ_k\ito\Map(\Gamma_x,Q_k)$.
%Moreover, $P_k=P\otimes1_k$. Hence we get
%$$
%P_kD\X^x(k)\simeq (P\otimes 1_k)(D\X^x\otimes_QQ_k)=PD\X^x\otimes_QQ_k=\X_x\otimes_Q Q_k\simeq \X_x(k).
%$$
By Lemma~\ref{lemma:16}, the basis $\{\hat\b_\gamma\}_{\gamma\in\Gamma_x}$ of $D\X^x$ mentioned above and
the similar basis for $D\X^x(k)$ yield an isomorphism (dashed arrow) making the following diagram commutative:
$$
\begin{tikzcd}
PD\X^x\otimes_S S_k\arrow[dashed]{r}{\sim}\arrow{d}&P_kD\X^x(k)\arrow{d}\\
H^\bullet_T(\Gamma)\otimes_S S_k\arrow{r}{\sim}&H^\bullet_T(\Gamma,k)
\end{tikzcd}
$$
Multiplying it by $P_k$, we get by Lemma~\ref{lemma:18}, an isomorphism (dashed arrow) making the following diagram commutative:
$$
\begin{tikzcd}
{}&\X_x\otimes_S S_k\arrow{rr}{\sim}\arrow{ddl}&&\X_x(k)\arrow{ddl}\\[-6pt]
(PD\X^x)\otimes_S S_k\arrow[crossing over]{rr}{\sim}\arrow{d}\arrow[-, double equal sign distance]{ru}&&P_kD\X^x(k)\arrow{d}\arrow[dashed]{ur}{\sim}&\\
H^\bullet_T(\Gamma)\otimes_S S_k\arrow{rr}{\sim}&&H^\bullet_T(\Gamma,k)
\end{tikzcd}
$$
The commutativity of the right triangle means that the isomorphism represented by the dashed arrow is over $H^\bullet_T(\Gamma,k)$,
which proves that it is the equality of subsets.
\end{proof}

\begin{proposition}\label{proposition:5} Let $H_x$ be the matrix defined by~(\ref{eq:40}) and
$P_x$ be the diagonal matrix with $\gamma^{th}$ entry $P_{\Z'}(\gamma)$. We set $H_{x,k}=H_x\otimes_SS_k$ and
$P_{x,k}=P_x\otimes_SS_k$. The costalk-to-stalk
embedding $\X_x(k)\hookrightarrow\X^x(k)$ is described by the transition matrix
$(H_{x,k}^{-1})^T\,P_{x,k}\,H_{x,k}^{-1}$. All entries of this matrix are divisible in $S_k$ by the Euler
class $e_x(k)=\prod_{\alpha\in \Phi^+,s_\alpha x<x}\alpha\otimes1_k$.
\end{proposition}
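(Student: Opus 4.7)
The plan is to combine Lemma~\ref{lemma:20} with the upper-triangular structure of $H_x$ coming from Lemma~\ref{lemma:10} to extract the transition matrix by direct bookkeeping, and then to leverage the divisibility $e_x(k)\mid P_k(\gamma)$ (noted right before the statement of Lemma~\ref{lemma:20}) for the integrality claim. First I would fix an explicit $S_k$-basis of $\X_x(k)$: by Lemma~\ref{lemma:20}, $\X_x(k)=P_kD\X^x(k)$, so if $\{\hat\b_\gamma\}_{\gamma\in\Gamma_x}$ denotes the basis of $D\X^x(k)$ dual to $\{\b_\gamma|_{\Gamma_x}\}$ with respect to the standard pairing on $\Gamma_x$, then $\{P_k\hat\b_\gamma\}_{\gamma\in\Gamma_x}$ is a basis of $\X_x(k)$. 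After ordering $\Gamma_x$ by $<$ (total on $\Gamma_x$), Lemma~\ref{lemma:10} makes $H_{x,k}$ upper triangular with diagonal entries $\aa(\gamma)$, and the duality identity $\hat H_{x,k}\,H_{x,k}^T=I$ yields $\hat\b_\gamma(\delta)=(H_{x,k}^{-1})_{\delta,\gamma}$.

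Next I would compute the transition matrix. Expanding $P_k\hat\b_\gamma=\sum_\delta A_{\delta,\gamma}\,\b_\delta|_{\Gamma_x}$ and evaluating both sides at every $\sigma\in\Gamma_x$ produces the matrix identity $H_{x,k}^T A=P_{x,k}\,H_{x,k}^{-1}$, whose unique solution is $A=(H_{x,k}^{-1})^T P_{x,k}\,H_{x,k}^{-1}$. This settles the first assertion.

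For the divisibility, I would factor $P_{x,k}=e_x(k)\cdot\tilde P$ with $\tilde P$ a diagonal matrix having entries in $S_k$; this is permitted because Proposition~\ref{lemma:4} forces $|M_\alpha(\gamma)|\ge 1$ for every positive root $\alpha$ with $s_\alpha x<x$ and every $\gamma\in\Gamma_x$. Substituting produces $A=e_x(k)\cdot(H_{x,k}^{-1})^T\tilde P\,H_{x,k}^{-1}$ as an identity in $\Mat(Q_k)$, so the content of the divisibility claim becomes the integrality assertion $(H_{x,k}^{-1})^T\tilde P\,H_{x,k}^{-1}\in\Mat(S_k)$, which is equivalent to showing $\tilde P\hat\b_\gamma\in\X^x(k)$ for each $\gamma\in\Gamma_x$. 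I expect this to be the main obstacle: via Proposition~\ref{proposition:2} it reduces, for each positive root $\alpha$ dividing $e_x(k)$, to strengthening by one extra power of $\alpha$ the H\"arterich congruence already known to be satisfied by $P_k\hat\b_\gamma$. I would establish this strengthened congruence by induction on $r$, in parallel to the construction of the auxiliary functions $q^\alpha_\gamma$ in the proof of Lemma~\ref{lemma:20}, using the operators $\Delta$ and $\nabla_t$ of Section~\ref{Copy and concentration}; the three-way case split $r\notin M_\alpha(\gamma)$, $r\in M_\alpha(\gamma)\setminus J_\alpha(\gamma)$, $r\in J_\alpha(\gamma)$ from Lemmas~\ref{lemma:6} and~\ref{lemma:9} should again drive the argument.
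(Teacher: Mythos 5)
Your derivation of the transition matrix $(H_{x,k}^{-1})^T P_{x,k} H_{x,k}^{-1}$ via Lemmas~\ref{lemma:10} and~\ref{lemma:20} is correct and essentially the implicit bookkeeping the paper has in mind (its proof opens with ``We need only to prove the divisibility''). Your reduction of the divisibility claim to the statement that each $\tilde P\hat\b_\gamma=(P_k/e_x(k))\hat\b_\gamma$ lies in $\X^x(k)$ is also valid.

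Where you diverge from the paper --- and where you are proposing machinery that is not needed --- is the method for proving $\tilde P\hat\b_\gamma\in\X^x(k)$. You envision an induction on $r$ mimicking the $q^\alpha_\gamma$ construction from the proof of Lemma~\ref{lemma:20}, and you flag it as the expected obstacle. The paper avoids all of this and instead proves the stronger and cleaner statement that $f/e_x\in\X^x$ for \emph{every} $f\in\X_x$, by a short argument that stays entirely inside $\Gamma_x$. Concretely: fix $f\in\X_x$ and take $k=\Z'$ (base change then handles general $k$). For $\gamma\in\Gamma_x$, the Proposition~\ref{proposition:1} congruence for $f$ has all nonzero terms indexed by $\delta\in\Gamma_x$ (because $f$ vanishes on $\OGamma_x$), and by Proposition~\ref{lemma:4} together with~(\ref{eq:39}) every such $\delta$ satisfies $\delta\le\gamma$. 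If $s_\alpha x<x$ then $|J_\alpha(\gamma)|=|D_\alpha(\gamma)|+1\ge1$, so induction on $\gamma$ in the total order $<$ on $\Gamma_x$ shows $f(\gamma)$ is divisible by $e_x$. Once pointwise divisibility is known, one divides the Proposition~\ref{proposition:1} congruence by $\alpha$ when $s_\alpha x<x$ (using that different positive roots are nonproportional, hence $e_x/\alpha$ is coprime to $\alpha$) and uses the equivalence $J_\alpha(\delta)\subset J_\alpha(\gamma)\Leftrightarrow D_\alpha(\delta)\subset D_\alpha(\gamma)$ on $\Gamma_x$ to land precisely on the Proposition~\ref{proposition:2} criterion for $f/e_x$. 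No induction on $r$, no operators $\Delta,\nabla_t$, and no $q^\alpha_\gamma$-type construction enter. Your plan is not obviously wrong, but it replaces a two-paragraph direct argument by a case-by-case inductive build whose details you have not checked; the paper's route is both shorter and applies uniformly to arbitrary $f\in\X_x$ rather than only to the basis vectors $P_k\hat\b_\gamma$.
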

\begin{proof} We need only to prove the divisibility. It suffices to consider the case $k=\Z'$. We denote $e_x=e_x(\Z')$.
Let $f\in\X_x$. By
Proposition~\ref{proposition:1}, we get
$$
\sum_{\delta\in\Gamma_x,\delta\sim_\alpha\gamma,J_\alpha(\delta)\subset J_\alpha(\gamma)}(-1)^{|J_\alpha(\delta)|}f(\delta)\=0\pmod{\alpha^{|J_\alpha(\gamma)|}}
$$
for any $\alpha\in\Phi^+$. If $s_\alpha x<x$, then $|J_\alpha(\gamma)|=|D_\alpha(\gamma)|+1$. By Proposition~\ref{lemma:4} and~(\ref{eq:39}),
we get $\delta\le\gamma$ in the above summation. Hence we get by induction that $f(\gamma)$ is divisible in $S$ by $e_x=e_x(\Z')$.
Dividing the above equivalence by $\alpha$ if $s_\alpha x<x$ and taking into account that different roots are not proportional, we obtain
$$
\sum_{\delta\in\Gamma_x,\delta\sim_\alpha\gamma,D_\alpha(\delta)\subset D_\alpha(\gamma)}(-1)^{|J_\alpha(\delta)|}f(\delta)/e_x\=0\pmod{\alpha^{|D_\alpha(\gamma)|}}.
$$
Thus we have proved that $f/e_x\in\X^x$.
\end{proof}

%\subsection{Change of coefficients} Let $k$ be a field whose characteristic is not $2$ if the root system is of type $C_n$,
%$n\ge3$. Then we have the natural ring homomorphism $\Z'\to k$. Then we have the following functor
%$$
%k{\stackrel L\otimes}_{\Z'}-:D^b_T(X,\Z')\to D^b_T(X,k)
%$$
%for any $T$-space $X$. This functor is known to commute with the $i^!$ and $i^*$

\subsection{Euler classes}\label{Euler_Classes} Consider the following closed $T$-equivariant embedding
$$
f:\{0\}\hookrightarrow V=\C_{\lm_1}\oplus\cdots\oplus\C_{\lm_d},
$$
where $\lm_i$ are characters of $T$ and $\C_{\lm_i}$ is the corresponding one-dimensional representation of $T$
(see the proof of Theorem~\ref{theorem:1}). We assume that $k$ is a field such that
$\lm_i\otimes_\Z k\ne0$ for any $i$. %Let $j:V\setminus\{0\}\hookrightarrow V$ be the corresponding open embedding.
We get the following exact sequence
\begin{equation}\label{eq:47}
H^\bullet_T(\{0\},f^!\csh kV)\to H^\bullet_T(V,k)\to H^\bullet_T(V\setminus\{0\},k)
\end{equation}
We want to look more closely at the right map. By~\cite[(2.15)]{Atiyah_Bott}, we have the following
commutative diagram with exact top row:
$$
\begin{tikzcd}
H^\bullet_T(V,V\setminus\{0\},k)\arrow{r}&H^\bullet_T(V,k)\arrow{d}{f^*}[swap]{\wr}\arrow{r}&H^\bullet_T(V\setminus\{0\},k)\\
H_T^{\bullet-2d}(\{0\},k)\arrow{u}[swap]{\Phi}{\wr}\arrow{ur}{f_*}\arrow{r}{e\cup ?}&H_T^\bullet(\{0\},k)&
\end{tikzcd}
$$
where $\Phi$ is the Thom isomorphism, $f_*$ is the push-forward and $e=\prod_{i=1}^d\lm_i\otimes_\Z k$
is the corresponding Euler class. By our assumption $e\ne0$. Hence $H^\bullet_T(V\setminus\{0\},k)$ vanishes in
odd degrees and $H^\bullet_T(V,k)\to H^\bullet_T(V\setminus\{0\},k)$ is epimorphic in any degree.
Coming back to~(\ref{eq:47}), we obtain the isomorphisms
$H^\bullet_T(\{0\},f^!\csh kV)\simeq H_T^{\bullet-2d}(\{0\},k)\simeq S_k[-2d]$ and
$H^\bullet_T(V,k)\simeq S_k$ under which the map $H^\bullet_T(\{0\},f^!\csh kV)\to H^\bullet_T(V,k)$
becomes the multiplication by $e$.

%\subsection{Defect of an embedding} Recall that $S_k$ has the maximal ideal $\m=\bigoplus_{i>0}S_k^i$.
%We clearly have $S_k/m\simeq k$. Let now $\phi:N\hookrightarrow M$ be an embedding of graded $S_k$-modules.
%Then we can consider the quotient $\im\phi/\m M$, which is a graded $S_k/\m$-module and thus also a graded $k$-vector space.
%%via the before mentioned isomorphism.
%If $N$ is a finitely generated $S_k$-module, then we can define the {\it defect} of $\phi$ as the graded dimension of this quotient:
%$$
%\mathop{\rm d}(\phi)=\sum_{q\in\Z}\dim_k(\im\phi/\m M)^qv^{-q}.
%$$
%This is an element of the ring of Laurent polynomials $\Z[v,v^{-1}]$. Clearly, $\mathop{\rm d}$ ia additive,
%that is $\mathop{\rm d}(\phi_1\oplus\phi_2)=\mathop{\rm d}(\phi_1)+\mathop{\rm d}(\phi_2)$.
%If the $N$ and $M$ are free $S_k$-modules, the $\mathop{\rm d}(\phi)$ can be calculated as follows.
%
%\begin{proposition}
%Suppose that $\{n^{(q)}_i\}_{q\in\Z,1\le i\le l_q}$ and $\{m^{(q)}_j\}_{q\in\Z,1\le j\le k_q}$
%are bases of $N$ and $M$ respectively labelled in such a way that $n^{(q)}_i$ and $m^{(q)}_j$ have degree $q$.
%Let
%$$
%n^{(q)}_i=\sum\nolimits_{p\in\Z, 1\le j\le k_p}a_{j,i}^{(p,q)}m^{(p)}_j
%$$
%for corresponding homogeneous $a_{j,i}^{(p,q)}\in S_k$. For each $q\in\Z$, we denote by $A^{(q)}$
%the $k_q\times l_q$-ma\-trix whose $ji^{\text{th}}$ entry is $a_{j,i}^{(q,q)}\in k$.
%Then $\mathop{\rm d}(\phi)=\sum_{q\in\Z}\mathop{\rm rk}_k A^{(q)}v^{-q}$.
%
%\end{proposition}

\subsection{Defect of a homomorphism} Recall that $S_k$ has the maximal ideal $\m=\bigoplus_{i>0}S_k^i$.
We clearly have $S_k/m\simeq k$. Let $\phi:U\to V$ be a homomorphism of graded $S_k$-modules.
Then we can consider the quotient $\im\phi/\m V$, which is a graded $S_k/\m$-module and thus also a graded $k$-vector space.
If $U$ is a finitely generated $S_k$-module, then we can define the {\it defect} of $\phi$ as the graded dimension of this quotient:
$$
\df(\phi)=\sum_{n\in\Z}\dim_k(\im\phi/\m V)^nv^{-n}.
$$
This is an element of the ring of Laurent polynomials $\Z[v,v^{-1}]$. Clearly
$\df(\phi_1\oplus\phi_2)=\df(\phi_1)+\df(\phi_2)$ and $\df(\phi[n])=v^n\df(\phi)$.
If $\phi$ is an embedding and $U$ and $V$ are finitely generated free $S_k$-modules,
then $\df(\phi)$ can be calculated as follows.

\begin{proposition}[\mbox{\cite[Corollary 3.3.3]{Shchigolev}}] Let $\phi:U\hookrightarrow V$ be an embedding of graded $S_k$-modules.
Suppose that $\{u^{(n)}_i\}_{n\in\Z,1\le i\le l_n}$ and $\{v^{(n)}_j\}_{n\in\Z,1\le j\le k_n}$
are bases of $U$ and $V$ respectively labelled in such a way that $u^{(n)}_i$ and $v^{(n)}_j$ have degree $n$.
Let
$$
\phi(u^{(n)}_i)=\sum\nolimits_{m\in\Z, 1\le j\le k_m}a_{j,i}^{(m,n)}v^{(m)}_j
$$
for corresponding homogeneous $a_{j,i}^{(m,n)}\in S_k$. For each $n\in\Z$, we denote by $A^{(n)}$
the $k_n\times l_n$-ma\-trix whose $ji^{\text{th}}$ entry is $a_{j,i}^{(n,n)}\in k$.
Then $\df(\phi)=\sum_{n\in\Z}\mathop{\rm rk}_k A^{(n)}v^{-n}$.
\end{proposition}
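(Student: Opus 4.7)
The plan is to reduce the computation to a degree-by-degree calculation in the $k$-vector space $V/\mathfrak{m}V$. First I would observe that, since $\phi$ is a morphism of graded $S_k$-modules (hence degree-preserving), it restricts for each $n\in\Z$ to a $k$-linear map $\phi^{(n)}:U^n\to V^n$. Composing with the projection $\pi:V\twoheadrightarrow V/\mathfrak{m}V$ yields a graded $k$-linear map $\bar\phi=\pi\circ\phi:U\to V/\mathfrak{m}V$, whose image in degree $n$ is precisely the degree-$n$ component of $(\im\phi+\mathfrak{m}V)/\mathfrak{m}V$. Under the natural identification $(\im\phi+\mathfrak{m}V)/\mathfrak{m}V\cong\im\phi/(\im\phi\cap\mathfrak{m}V)$, this is what the definition of $\df(\phi)$ measures.

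Next I would identify the matrix of $\bar\phi^{(n)}$ in the given bases. The basis $\{u_i^{(n)}\}_{1\le i\le l_n}$ of $U^n$ maps to $(V/\mathfrak{m}V)^n$, which has basis $\{\bar v_j^{(n)}\}_{1\le j\le k_n}$ (the residues of the degree-$n$ basis vectors of $V$). Writing
\[
\phi(u_i^{(n)})=\sum_{m\in\Z,\,1\le j\le k_m}a_{j,i}^{(m,n)}v_j^{(m)},
\]
homogeneity of $\phi$ forces $a_{j,i}^{(m,n)}$ to lie in $S_k^{n-m}$. Hence $a_{j,i}^{(m,n)}=0$ for $m>n$, while for $m<n$ the coefficient $a_{j,i}^{(m,n)}$ has strictly positive degree and thus lies in $\mathfrak{m}$; only the terms with $m=n$ survive modulo $\mathfrak{m}V$, and these have coefficients in $S_k^0=k$. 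Consequently
\[
\bar\phi^{(n)}(u_i^{(n)})=\sum_{j=1}^{k_n}a_{j,i}^{(n,n)}\bar v_j^{(n)},
\]
so $\bar\phi^{(n)}$ is represented by the matrix $A^{(n)}\in\Mat_{k_n\times l_n}(k)$.

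Finally I would assemble the pieces. Since the grading splits, $\im\bar\phi=\bigoplus_{n\in\Z}\im\bar\phi^{(n)}$, and $\dim_k\im\bar\phi^{(n)}=\mathop{\rm rk}_k A^{(n)}$. Substituting into the definition gives
\[
\df(\phi)=\sum_{n\in\Z}\dim_k(\im\phi/\mathfrak{m}V)^n\,v^{-n}=\sum_{n\in\Z}\mathop{\rm rk}_k A^{(n)}\,v^{-n},
\]
which is the claimed formula.

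I do not anticipate a real obstacle: the argument is essentially linear algebra over $k$, and the hypothesis that $U$ is finitely generated serves only to guarantee that each $l_n$ is finite so that the sum makes sense in $\Z[v,v^{-1}]$. Notably, the embedding hypothesis on $\phi$ is not used in the computation; it is included so that $\df(\phi)$ genuinely measures "how much of $\phi$ survives mod $\mathfrak{m}$," which is the intended interpretation in the applications.
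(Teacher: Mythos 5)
Your proof is correct, but note that the paper itself does not prove this proposition --- it is quoted verbatim from~\cite[Corollary 3.3.3]{Shchigolev}, so there is no internal proof to compare against. Your argument is nonetheless exactly the natural one: the graded decomposition $\bar\phi = \bigoplus_n \bar\phi^{(n)}$ of the composite $U\to V\to V/\m V$, the degree bookkeeping that forces $a^{(m,n)}_{j,i}\in S_k^{n-m}$ so that only the diagonal blocks $A^{(n)}$ survive modulo $\m$, and the identification $\dim_k(\im\phi/\m V)^n=\dim_k\im\bar\phi^{(n)}=\rank_k A^{(n)}$. The only point worth making explicit, which you handle correctly, is the reading of the slightly abusive notation $\im\phi/\m V$ as $(\im\phi+\m V)/\m V$, i.e., the image of $\im\phi$ under $V\twoheadrightarrow V/\m V$. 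Your closing observation that the injectivity of $\phi$ plays no role in the computation is accurate; the freeness of $U$ and $V$ (needed so that the coefficient matrices $A^{(n)}$ and the residues $\bar v^{(n)}_j$ form a $k$-basis of $(V/\m V)^n$) is the hypothesis that actually does the work.
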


Finally, we describe a homomorphism of graded modules $\phi:U\to V$ can be divided by a homogeneous element $e\in S_k^d$
that is no zero divisor for $V$. Suppose that for any $u\in U$ there exists $(\phi/e)(u)\in V$ such that
$\phi(u)=e(\phi/e)(u)$. Then we get a uniquely defined homomorphism $\phi/e:U\to V$ of $S_k$-modules
such that $(\phi/e)(U_{i+d})\subset V_i$. It is a homomorphism of graded modules $\phi/e:U[d]\to V$.

\subsection{Application to parity sheaves}\label{Application_to_parity_sheaves} Our calculations have not yet involved any stratifications.
In this section, we are going to apply our results to the stratification $G/B=\bigsqcup_{x\in W}BxB/B$.
We denote $X=G/B$ and $X_x=BxB/B$ for brevity and assume that $k$ is a field. By~\cite{Williamson_parity_sheaves} there exists the following decomposition:
$$
\pi_*\csh{k}{\Sigma}[r]=\bigoplus_{x\in W}\bigoplus_{d\in\Z}\E(x,k)[-d]^{\oplus m(x,d)},
$$
where $\E(x,k)\in D_T(X,k)$ is the $T$-equivariant parity sheaf such that $\supp\E(x,k)\subset\overline{X_x}$
and $\E(x,k)|_{X_x}=\csh k{X_x}[d_x]$, where $d_x=\dim X_x$. Our aim is to calculate the multiplicities $m(x,d)$.

We rewrite the above decomposition as follows:
\begin{equation}\label{eq:51}
\pi_*\csh{k}{\Sigma}=\bigoplus_{x\in W}\bigoplus_{d\in\Z}\E(x,k)[-d-r]^{\oplus m(x,d)}
\end{equation}
and consider the natural embedding $i_x:\{x\}\hookrightarrow X$.
We are going to take the following steps:
\begin{itemize}
\item apply to both sides of~(\ref{eq:51}) the morphism of functors $H_T^\bullet(\{x\},i_x^!\_)\to H_T^\bullet(\{x\},i_x^*\_)$;\\[-6pt]
\item divide it by the Euler class $e_x=\prod_{\alpha\in\Phi^+,s_\alpha x<x}\alpha\otimes1_k$;\\[-6pt]
\item take the defect of the resulting map.
\end{itemize}

First consider the left-hand side of~(\ref{eq:51}). We have the following Cartesian diagram:
$$
\begin{tikzcd}
\Sigma_x\arrow{d}{\pi_x}\arrow{r}{i}&\Sigma\arrow{d}{\pi}\\
\{x\}\arrow{r}{i_x}&X
\end{tikzcd}
$$
As $\pi$ is proper, the base change yields $i_x^!\pi_*\csh{k}{\Sigma}\simeq(\pi_x)_*i^!\csh{k}{\Sigma}$ and
$i_x^*\pi_*\csh{k}{\Sigma}\simeq(\pi_x)_*i^*\csh{k}{\Sigma}$. Hence the map
$H_T^\bullet(\{x\},i_x^!\pi_*\csh{k}{\Sigma})\to H_T^\bullet(\{x\},i_x^*\pi_*\csh{k}{\Sigma})$ is isomorphic to
$H_T^\bullet(\Sigma_x,i^!\csh{k}{\Sigma})\to H_T^\bullet(\Sigma_x,i^*\csh{k}{\Sigma})$, which
in its turn is isomorphic to $\X_x(k)\hookrightarrow\X^x(k)$ by Corollary~\ref{corollary:5}.

It order to tackle the right-hand side of~(\ref{eq:51}), let us compute the map
\begin{equation}\label{eq:49}
H_T^\bullet(\{x\},i_x^!\E(y,k))\to H_T^\bullet(\{x\},i_x^*\E(y,k)).
\end{equation}
If $x\notin\overline{X_y}$, then $i_x^!\E(y,k)=i_x^*\E(y,k)=0$ as $\E(y,k)|_{X\setminus\overline{X_y}}=0$.
Therefore, we must only consider the case $x\in\overline{X_y}$ that is $x\le y$.

Let $U=\bigsqcup_{z\ge x}X_z$. This is an open subset of $X$, which contains
$X_x$ as a closed subset. Moreover, the restriction $\F=\E(y,k)|_U$ is an indecomposable parity sheaf on $U$.
Let $i:X_x\hookrightarrow U$, $f:\{x\}\hookrightarrow X_x$ and $\tilde\imath_x:\{x\}\hookrightarrow U$ denote the natural embeddings:
$$
\begin{tikzcd}
\{x\}\arrow{r}{f}\arrow[bend right]{rrr}[swap]{i_x}\arrow[bend left]{rr}{\tilde\imath_x}&X_x\arrow{r}{i}&U\arrow{r}&X
\end{tikzcd}
$$
Let $\phi:i^!\F\to i^*\F$ denote the natural morphism.

Suppose first that $x<y$. Then $\F$ has no direct summands supported on $X_x$.
Let us write
$i^!\F=\bigoplus_{m\in\Z}Q^m$ and $i^*\F=\bigoplus_{n\in\Z}P^n$, where $Q^m=H^m(i^!\F)[-m]$ and $P^n=H^n(i^*\F)[-n]$.
We denote by
%$\phi_{n,m}:H^m(i^!\F)[-m]\to H^n(i^*\F)[-n]$
$\phi_{n,m}:Q^m\to P^n$
the corresponding morphism of the direct summands.
By~\cite[Corollary 2.22]{Williamson_parity_sheaves}, we get that $\phi_{n,m}=0$ for $m\le n$.

%Let $f:\{x\}\hookrightarrow X_x$ and $\tilde\imath_x:\{x\}\hookrightarrow U$ denote the natural embeddings.
%We have $i\circ f=\tilde\imath_x$.
We denote by $A:f^!\to f^*$ the natural morphism of functors.
Then the composition $f^*\phi\circ A(i^!\F)$ is the natural morphism $\tilde\imath_x^!\F\to\tilde\imath_x^*\F$:
$$
\begin{tikzcd}
\tilde\imath_x^!\F=f^!\circ i^!\F\arrow{r}{A(i^!\F)}&f^*\circ i^!\F\arrow{r}{f^*\phi}&f^*\circ i^*\F=\tilde\imath_x^*\F.
\end{tikzcd}
$$
Recalling our decompositions of $i^!\F$ and $i^*\F$, we represent this morphism as the direct sum of the following morphisms:
\begin{equation}\label{eq:48}
\begin{tikzcd}
f^!Q^m\arrow{r}{A(Q^m)}&f^*Q^m\arrow{r}{f^*\phi_{n,m}}&f^*P^n
\end{tikzcd}
\end{equation}
for $m>n$. We have the decompositions $Q^m=\csh k{X_x}[-m]^{\oplus a(x,m)}$ and $P^n=\csh k{X_x}[-n]^{\oplus b(x,n)}$
for some nonnegative integers $a(x,m)$ and $b(x,n)$.
Applying $H^\bullet_T(\{x\},\_)$ to~(\ref{eq:48}), we get maps from
\begin{multline*}
H^\bullet_T(\{x\},f^!Q^m)=H^\bullet_T(\{x\},f^!\csh k{X_x}[-m])^{\oplus a(x,m)}=
H^{\bullet-m}_T(\{x\},f^!\csh k{X_x})^{\oplus a(x,m)}\\
=H^{\bullet-m-2d_x}_T(\{x\},k)^{\oplus a(x,m)}=S_k[-m-2d_x]^{\oplus a(x,m)}
\end{multline*}
to
$$
H^\bullet_T(\{x\},f^*Q^m)=H^\bullet_T(\{x\},f^*\csh k{X_x}[-m])^{\oplus a(x,m)}=
H^{\bullet-m}_T(\{x\},k)^{\oplus a(x,m)}=S_k[-m]^{\oplus a(x,m)}
$$
and finally to
$$
H^\bullet_T(\{x\},f^*P^n)=H^\bullet_T(\{x\},f^*\csh k{X_x}[-n])^{\oplus b(x,n)}=
H^{\bullet-n}_T(\{x\},k)^{\oplus b(x,n)}=S_k[-n]^{\oplus b(x,n)}.
$$
So we get the following sequence of maps (see Section~\ref{Euler_Classes}):
$$
\begin{tikzcd}
S_k[-m-2d_x]^{\oplus a(x,m)}\arrow{r}{e_x\cup ?}&S_k[-m]^{\oplus a(x,m)}\arrow{r}&S_k[-n]^{\oplus b(x,n)}.
\end{tikzcd}
$$
The division by $e_x$ yields a map $S_k[-m]^{\oplus a(x,m)}\to S_k[-n]^{\oplus b(x,n)}$.
As $m>n$ the defect of this map is zero. Thus we have proved that the defect of the map
$H^\bullet_T(\{x\},\tilde\imath_x^!\F)\to H^\bullet_T(\{x\},\tilde\imath_x^*\F)$ divided by $e_x$ is zero.
Recalling that $\F=\E(y,k)|_U$, we get that the defect of %the map
%$H_T^\bullet(\{x\},i_x^!\E(y,k))\to H_T^\bullet(\{x\},i_x^*\E(y,k))$
(\ref{eq:49}) divided by $e_x$ is also zero.

It remains to calculate the defect of
%$H_T^\bullet(\{x\},i_x^!\E(x,k))\to H_T^\bullet(\{x\},i_x^*\E(x,k))$
%the map~
(\ref{eq:49})
divided by $e_x$ in the case $x=y$.
Consider the natural embedding $j:U\setminus X_x\hookrightarrow U$. We have $j^*\F=\E(x,k)|_{U\setminus X_x}=0$
as $U\setminus X_x\subset X\setminus\overline{X_x}$. The distinguished triangle
$$
0=j_!j^*\F\to\F\to i_*i^*\F\stackrel{+1}\to
$$
yields $\F=i_*i^*\F=i_*\csh k{X_x}[d_x]$. For any $?\in\{!,*\}$, we get
$$
i_x^?\E(x,k)=\tilde\imath_x^?\F=\tilde\imath_x^?i_*\csh k{X_x}[d_x]=f^?i^?i_*\csh k{X_x}[d_x]=f^?\csh k{X_x}[d_x].
$$
Hence~(\ref{eq:49}) becomes the natural map
$$
H_T^{\bullet+d_x}(\{x\},f^!\csh k{X_x})\to H_T^{\bullet+d_x}(\{x\},f^*\csh k{X_x}).
$$
Under the identifications of Section~\ref{Euler_Classes}, this map becomes $\edgeright{S[-d_x]}{e_x\cup ?}{S[d_x]}$.
Division by $e_x$ leaves us with the identity map $S_k[d_x]\to S_k[d_x]$, whose defect is obviously is $v^{d_x}$.
Hence we get the following result.

\begin{theorem}\label{theorem:5} The defect of the inclusion $\X_x(k)\hookrightarrow\X^x(k)$ divided by $e_x$
is $$\sum_{d\in\Z}m(x,d)v^{d_x-d-r}.$$
\end{theorem}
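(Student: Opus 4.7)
The plan is to take the defect, divided by $e_x$, of the map
$$\phi_x\colon H^\bullet_T(\{x\},i_x^!\pi_*\csh{k}{\Sigma})\to H^\bullet_T(\{x\},i_x^*\pi_*\csh{k}{\Sigma})$$
and to evaluate it in two ways via the decomposition~(\ref{eq:51}). For the left-hand interpretation, the properness of $\pi$ yields, by base change, $i_x^?\pi_*\csh{k}{\Sigma}\simeq(\pi_x)_*i^?\csh{k}{\Sigma}$ for $?\in\{!,*\}$, so $\phi_x$ is the image under $H^\bullet_T(\Sigma_x,\_)$ of the natural morphism $i^!\csh{k}{\Sigma}\to i^*\csh{k}{\Sigma}$; by Corollary~\ref{corollary:5} this coincides with the embedding $\X_x(k)\hookrightarrow\X^x(k)$. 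Proposition~\ref{proposition:5} makes $\phi_x/e_x$ well-defined ($\X^x(k)$ is free over $S_k$, so $e_x$ is not a zero divisor in it), and its defect is by definition the left-hand side of the claimed identity.

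To compute the same defect via the right-hand side of~(\ref{eq:51}), I would use that the functor $H^\bullet_T(\{x\},i_x^?(\_))$ and the division by $e_x$ distribute over the direct sum, together with $\df(\phi_1\oplus\phi_2)=\df(\phi_1)+\df(\phi_2)$ and $\df(\phi[n])=v^n\df(\phi)$. This yields
$$\df(\phi_x/e_x)=\sum_{y\in W}\sum_{d\in\Z}m(y,d)\,v^{-d-r}\,\df_y,$$
where $\df_y$ denotes the defect of the induced map $(H^\bullet_T(\{x\},i_x^!\E(y,k))\to H^\bullet_T(\{x\},i_x^*\E(y,k)))/e_x$. The case analysis already performed in the excerpt shows $\df_y=0$ for all $y\ne x$: if $x\not\le y$ then $i_x^?\E(y,k)=0$; if $x<y$, the parity vanishing $\phi_{n,m}=0$ for $m\le n$ leaves only blocks with $m>n$, which after dividing by $e_x$ become graded maps $S_k[-m]^{a(x,m)}\to S_k[-n]^{b(x,n)}$ whose matrix entries lie in $S_k^{m-n}\subset\m$, so their image is contained in $\m\cdot S_k[-n]^{b(x,n)}$ and contributes zero to the defect. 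For $y=x$ the quotient by $e_x$ becomes the identity on $S_k[d_x]$, giving $\df_x=v^{d_x}$. Only the $y=x$ terms survive, producing $\sum_d m(x,d)\,v^{d_x-d-r}$, which matches the left-hand interpretation.

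The main subtlety I expect is to verify that division by $e_x$ distributes correctly over the decomposition into indecomposable summands --- that is, that each individual block $H^\bullet_T(\{x\},i_x^!\E(y,k))\to H^\bullet_T(\{x\},i_x^*\E(y,k))$ is itself divisible by $e_x$, not merely their direct sum. This is guaranteed precisely by the parity vanishing $\phi_{n,m}=0$ for $m\le n$, which forces every nonzero block of the natural transformation $i_x^!\E(y,k)\to i_x^*\E(y,k)$ to factor through the canonical morphism $A(Q^m)\colon f^!Q^m\to f^*Q^m$, i.e.\ through multiplication by $e_x$ under the identifications of Section~\ref{Euler_Classes}.
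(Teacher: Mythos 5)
Your proposal is correct and follows essentially the same route as the paper: compute the defect of $H^\bullet_T(\{x\},i_x^!\pi_*\csh k\Sigma)\to H^\bullet_T(\{x\},i_x^*\pi_*\csh k\Sigma)$ divided by $e_x$ in two ways, identifying the left-hand side with $\X_x(k)\hookrightarrow\X^x(k)$ via proper base change and Corollary~\ref{corollary:5}, and summing the contributions of each $\E(y,k)[-d-r]^{\oplus m(y,d)}$ on the right, with only $y=x$ surviving. One small imprecision: the factorization of each block through $A(Q^m)\colon f^!Q^m\to f^*Q^m$ (hence its divisibility by $e_x$) holds unconditionally, coming from writing the natural transformation as $f^*\phi\circ A(i^!\F)$; the parity vanishing $\phi_{n,m}=0$ for $m\le n$ is not what produces that factorization but rather what guarantees that the surviving blocks with $m>n$ have entries in $\m$ and hence contribute zero defect when $x<y$.
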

Once we compute the above inclusion, for example, by Proposition~\ref{proposition:5},
we can recover the coefficients $m(x,d)$.

\subsection{Example of torsion}\label{Example_of_torsion} We use here the notation of Proposition~\ref{proposition:5}.
Let $G=\SL_8(\C)$, $\Pi=\{\alpha_1,\ldots,\alpha_7\}$ and
$$s=(s_3,s_2,s_1,s_5,s_4,s_3,s_2,s_6,s_5,s_4,s_3,s_7,s_6,s_5),\quad x=s_2s_3s_2s_5s_6s_5,$$
where $s_i=s_{\alpha_i}$. We arrange elements of $\Gamma_x$ in ascending order with respect to $<$.
%
%$$
%\fontsize{1}{15}
%\begin{array}{ccccccccccccccccccccccccccccc}
%1&1&1&1&1&1&1&1&1&1&1&1&1&1&1&1&1&1&1&1&1&1&1&1&1&1&1&1&1\\
%0& \alpha_3& -\alpha_2& 0& \alpha_3& -\alpha_2& -\alpha_{4,6}& -\alpha_{2,6}& 0& 0& 0& 0& 0& \alpha_3&-\alpha_2& -\alpha_{4,6}& -\alpha_{2,6}& 0& 0& 0& \alpha_3& -\alpha_2& 0& 0& 0& \alpha_3& -\alpha_2& 0& 0
%\end{array}
%$$
%
The matrix $H_x=\{h_{i,j}\}_{i,j=1}^{29}$ computed by~(\ref{eq:40}) has the following nonzero entries:
{\footnotesize
\begin{multline*}
h_{1,j}=1\text{ for }1\le j\le 29, h_{13,13}=h_{13,14}=h_{13,15}=h_{13,16}=h_{13,17}=h_{13,18}=h_{13,19}=\alpha_5+\alpha_6,\\
h_{3,3}= h_{3,6}= h_{3,8}= h_{3,15}= h_{3,17}= h_{3,22}= h_{3,27}=\alpha_2+\alpha_3, h_{7,7}= h_{7,8}= h_{7,16}= h_{7,17}=\alpha_3+\alpha_4+\alpha_5,\\
h_{20,20}= h_{20,21}= h_{20,22}= h_{20,23}= h_{20,24}= h_{20,25}= h_{20,26}= h_{20,27}= h_{20,28}= h_{20,29}=\alpha_6,\\
h_{4,4}= h_{4,5}= h_{4,6}= h_{4,11}= h_{4,12}= h_{13,25}= h_{13,26}= h_{13,27}= h_{13,28}= h_{13,29}=\alpha_5,h_{27,27}=\alpha_6\alpha_5(\alpha_2+\alpha_3)\\
h_{9,9}= h_{9,10}= h_{9,11}= h_{9,12}= h_{9,18}= h_{9,19}= h_{9,23}= h_{9,24}= h_{9,28}= h_{9,29}=\alpha_2,h_{5,14}= h_{6,19}=-\alpha_3\alpha_6,\\
h_{2,2}= h_{2,5}= h_{2,14}= h_{2,21}= h_{2,26}= h_{3,10}= h_{3,12}= h_{3,19}= h_{3,24}= h_{3,29}=\alpha_3,h_{26,27}=-\alpha_6\alpha_2\alpha_5,\\
h_{4,16}= h_{4,17}=-\alpha_3-\alpha_4-\alpha_5-\alpha_6, h_{2,7}=h_{2,16}=-\alpha_4-\alpha_5,
h_{2,8}= h_{2,17}=-\alpha_2-\alpha_3-\alpha_4-\alpha_5,\\
h_{12,12}=h_{19,29}=\alpha_2\alpha_3\alpha_5, h_{26,26}= h_{27,29}=\alpha_6\alpha_3\alpha_5,
h_{6,17}=-(\alpha_3+\alpha_4+\alpha_5+\alpha_6)(\alpha_2+\alpha_3),\\
h_{5,6}=h_{14,27}=-\alpha_2\alpha_5, h_{10,10}= h_{10,12}= h_{10,19}= h_{10,24}= h_{10,29}=\alpha_2\alpha_3,
h_{28,28}= h_{28,29}=\alpha_6\alpha_2\alpha_5,\\
h_{24,24}= h_{24,29}=\alpha_2\alpha_3\alpha_6,
h_{5,5}= h_{6,12}= h_{14,26}= h_{15,29}=\alpha_3\alpha_5,
h_{11,18}= h_{11,19}= h_{21,22}= h_{21,27}=-\alpha_2\alpha_6,\\
h_{22,22}= h_{22,27}=\alpha_6(\alpha_2+\alpha_3),
h_{14,14}= h_{15,19}=(\alpha_5+\alpha_6)\alpha_3,
h_{2,3}= h_{2,6}= h_{2,15}= h_{2,22}= h_{2,27}=-\alpha_2,\\
h_{6,15}=-\alpha_6(\alpha_2+\alpha_3),
h_{19,19}=(\alpha_5+\alpha_6)\alpha_2\alpha_3,
h_{15,15}= h_{15,17}=(\alpha_5+\alpha_6)(\alpha_2+\alpha_3),
h_{14,15}=-(\alpha_5+\alpha_6)\alpha_2,\\
h_{14,16}=-(\alpha_5+\alpha_6)(\alpha_4+\alpha_5),
h_{5,7}=(\alpha_3+\alpha_4)(\alpha_4+\alpha_5),
h_{14,17}=-(\alpha_5+\alpha_6)(\alpha_2+\alpha_3+\alpha_4+\alpha_5),\\
h_{18,18}= h_{18,19}=(\alpha_5+\alpha_6)\alpha_2,
h_{5,8}=(\alpha_3+\alpha_4)(\alpha_2+\alpha_3+\alpha_4+\alpha_5),
h_{11,11}= h_{11,12}= h_{18,28}= h_{18,29}=\alpha_2\alpha_5,\\
h_{16,16}= h_{16,17}=(\alpha_5+\alpha_6)(\alpha_3+\alpha_4+\alpha_5),
h_{12,19}=-\alpha_2\alpha_3\alpha_6,
h_{29,29}=\alpha_2\alpha_3\alpha_5\alpha_6,h_{4,7}= h_{4,8}=-\alpha_3-\alpha_4,\\
h_{4,13}= h_{4,14}= h_{4,15}= h_{4,18}= h_{4,19}=-\alpha_6,
h_{5,15}= h_{23,23}= h_{23,24}= h_{23,28}= h_{23,29}=\alpha_2\alpha_6,\\
h_{6,6}= h_{15,27}=\alpha_5(\alpha_2+\alpha_3),
h_{8,8}= h_{8,17}=(\alpha_3+\alpha_4+\alpha_5)(\alpha_2+\alpha_3),
h_{17,17}=(\alpha_5+\alpha_6)(\alpha_3+\alpha_4+\alpha_5)(\alpha_2+\alpha_3),\\
h_{6,8}=-(\alpha_3+\alpha_4)(\alpha_2+\alpha_3),
h_{5,16}=(\alpha_3+\alpha_4+\alpha_5+\alpha_6)(\alpha_4+\alpha_5),
h_{5,17}=(\alpha_3+\alpha_4+\alpha_5+\alpha_6)(\alpha_2+\alpha_3+\alpha_4+\alpha_5),\\
h_{21,21}= h_{21,26}= h_{22,24}= h_{22,29}=\alpha_3\alpha_6,
h_{25,25}= h_{25,26}= h_{25,27}= h_{25,28}= h_{25,29}=\alpha_5\alpha_6.
\end{multline*}}
The $29^{th}$ row of the matrix $(H_x^{-1})^T$ has minimal degree. Its precise value is the following:
\begin{multline*}
\textstyle
r_{29}=\Big(\frac1{\alpha_6\alpha_2\alpha_5\alpha_3}, -\frac1{(\alpha_2+\alpha_3)\alpha_6\alpha_5\alpha_3},-\frac1{\alpha_2\alpha_6\alpha_5(\alpha_2+\alpha_3)}, -\frac1{(\alpha_5+\alpha_6)\alpha_2\alpha_5\alpha_3},\frac1{(\alpha_5+\alpha_6)(\alpha_2+\alpha_3)\alpha_5\alpha_3}, \\
\textstyle
\frac1{\alpha_2(\alpha_5+\alpha_6)\alpha_5(\alpha_2+\alpha_3)},0,0,-\frac1{\alpha_6\alpha_2\alpha_5\alpha_3}, \frac1{\alpha_6\alpha_2\alpha_5\alpha_3}, \frac1{(\alpha_5+\alpha_6)\alpha_2\alpha_5\alpha_3}, -\frac1{(\alpha_5+\alpha_6)\alpha_2\alpha_5\alpha_3},-\frac1{\alpha_6(\alpha_5+\alpha_6)\alpha_2\alpha_3},\\
\textstyle
\frac1{\alpha_6(\alpha_2+\alpha_3)(\alpha_5+\alpha_6)\alpha_3},\frac1{\alpha_2\alpha_6(\alpha_5+\alpha_6)(\alpha_2+\alpha_3)},0,0,\frac1{\alpha_6(\alpha_5+\alpha_6)\alpha_2\alpha_3}, -\frac1{\alpha_6(\alpha_5+\alpha_6)\alpha_2\alpha_3}, -\frac1{\alpha_6\alpha_2\alpha_5\alpha_3},\\
\textstyle
\frac1{(\alpha_2+\alpha_3)\alpha_6\alpha_5\alpha_3}, \frac1{\alpha_2\alpha_6\alpha_5(\alpha_2+\alpha_3)}, \frac1{\alpha_6\alpha_2\alpha_5\alpha_3}, -\frac1{\alpha_6\alpha_2\alpha_5\alpha_3}, \frac1{\alpha_6\alpha_2\alpha_5\alpha_3}, -\frac1{(\alpha_2+\alpha_3)\alpha_6\alpha_5\alpha_3}, -\frac1{\alpha_2\alpha_6\alpha_5(\alpha_2+\alpha_3)},\\
\textstyle
-\frac1{\alpha_6\alpha_2\alpha_5\alpha_3}, \frac1{\alpha_6\alpha_2\alpha_5\alpha_3}\Big).
\end{multline*}
We have the following  Euler class: $e_x=\alpha_3(\alpha_2+\alpha_3)\alpha_6\alpha_2(\alpha_5+\alpha_6)\alpha_5$. Hence
the matrix $P_x/e_x$ has the following diagonal:

{\footnotesize
\begin{multline*}
p:=\big(\alpha_3\alpha_6\alpha_2\alpha_5\alpha_1\alpha_4(\alpha_3+\alpha_4+\alpha_5)\alpha_7,-\alpha_3(\alpha_2+\alpha_3)\alpha_1\alpha_5(\alpha_3+\alpha_4)\alpha_6(\alpha_3+\alpha_4+\alpha_5)\alpha_7,\\
-(\alpha_2+\alpha_3)(\alpha_1+\alpha_2+\alpha_3)\alpha_5(\alpha_3+\alpha_4)\alpha_2\alpha_6(\alpha_3+\alpha_4+\alpha_5)\alpha_7, -\alpha_3\alpha_2\alpha_1\alpha_5(\alpha_4+\alpha_5)(\alpha_5+\alpha_6)(\alpha_3+\alpha_4+\alpha_5)\alpha_7,\\
\alpha_3(\alpha_2+\alpha_3)\alpha_1\alpha_5(\alpha_3+\alpha_4+\alpha_5)^2(\alpha_5+\alpha_6)\alpha_7,(\alpha_2+\alpha_3)(\alpha_1+\alpha_2+\alpha_3)\alpha_5(\alpha_3+\alpha_4+\alpha_5)^2\alpha_2(\alpha_5+\alpha_6)\alpha_7,\\
-(\alpha_2+\alpha_3)\alpha_1(\alpha_3+\alpha_4+\alpha_5)^2(\alpha_4+\alpha_5)(\alpha_5+\alpha_6)(\alpha_3+\alpha_4)\alpha_7,\\
(\alpha_2+\alpha_3)(\alpha_1+\alpha_2+\alpha_3)(\alpha_3+\alpha_4+\alpha_5)^2(\alpha_2+\alpha_3+\alpha_4+\alpha_5)(\alpha_5+\alpha_6)(\alpha_3+\alpha_4)\alpha_7,\\
-\alpha_3\alpha_2(\alpha_1+\alpha_2)\alpha_5\alpha_4\alpha_6(\alpha_2+\alpha_3+\alpha_4+\alpha_5)\alpha_7, \alpha_3(\alpha_1+\alpha_2+\alpha_3)\alpha_5(\alpha_3+\alpha_4)\alpha_2\alpha_6(\alpha_2+\alpha_3+\alpha_4+\alpha_5)\alpha_7,
\\ \alpha_3\alpha_2(\alpha_1+\alpha_2)\alpha_5(\alpha_4+\alpha_5)(\alpha_5+\alpha_6)(\alpha_2+\alpha_3+\alpha_4+\alpha_5)\alpha_7,
\\-\alpha_3(\alpha_1+\alpha_2+\alpha_3)\alpha_5(\alpha_3+\alpha_4+\alpha_5)\alpha_2(\alpha_5+\alpha_6)(\alpha_2+\alpha_3+\alpha_4+\alpha_5)\alpha_7,\\
-\alpha_3\alpha_2\alpha_1(\alpha_4+\alpha_5)(\alpha_5+\alpha_6)\alpha_6(\alpha_3+\alpha_4+\alpha_5)(\alpha_5+\alpha_6+\alpha_7), \alpha_3(\alpha_2+\alpha_3)\alpha_1(\alpha_3+\alpha_4+\alpha_5)^2(\alpha_5+\alpha_6)\alpha_6(\alpha_5+\alpha_6+\alpha_7),\!\\
(\alpha_2+\alpha_3)(\alpha_1+\alpha_2+\alpha_3)(\alpha_3+\alpha_4+\alpha_5)^2\alpha_2(\alpha_5+\alpha_6)\alpha_6(\alpha_5+\alpha_6+\alpha_7),\\
\end{multline*}

\begin{multline*}
(\alpha_2+\alpha_3)\alpha_1(\alpha_3+\alpha_4+\alpha_5)^2(\alpha_4+\alpha_5)(\alpha_5+\alpha_6)(\alpha_3+\alpha_4+\alpha_5+\alpha_6)(\alpha_5+\alpha_6+\alpha_7),\\
 -(\alpha_2+\alpha_3)(\alpha_1+\alpha_2+\alpha_3)(\alpha_3+\alpha_4+\alpha_5)^2(\alpha_2+\alpha_3+\alpha_4+\alpha_5)(\alpha_5+\alpha_6)(\alpha_3+\alpha_4+\alpha_5+\alpha_6)(\alpha_5+\alpha_6+\alpha_7),\\
 \alpha_3\alpha_2(\alpha_1+\alpha_2)(\alpha_4+\alpha_5)(\alpha_5+\alpha_6)\alpha_6(\alpha_2+\alpha_3+\alpha_4+\alpha_5)(\alpha_5+\alpha_6+\alpha_7),\\
  -\alpha_3(\alpha_1+\alpha_2+\alpha_3)(\alpha_3+\alpha_4+\alpha_5)\alpha_2(\alpha_5+\alpha_6)\alpha_6(\alpha_2+\alpha_3+\alpha_4+\alpha_5)(\alpha_5+\alpha_6+\alpha_7),
\\-\alpha_3\alpha_2\alpha_1\alpha_5\alpha_4\alpha_6(\alpha_3+\alpha_4+\alpha_5+\alpha_6)(\alpha_6+\alpha_7), \alpha_3(\alpha_2+\alpha_3)\alpha_1\alpha_5(\alpha_3+\alpha_4)\alpha_6(\alpha_3+\alpha_4+\alpha_5+\alpha_6)(\alpha_6+\alpha_7),\\
(\alpha_2+\alpha_3)(\alpha_1+\alpha_2+\alpha_3)\alpha_5(\alpha_3+\alpha_4)\alpha_2\alpha_6(\alpha_3+\alpha_4+\alpha_5+\alpha_6)(\alpha_6+\alpha_7),
\\ \alpha_3\alpha_2(\alpha_1+\alpha_2)\alpha_5\alpha_4\alpha_6(\alpha_2+\alpha_3+\alpha_4+\alpha_5+\alpha_6)(\alpha_6+\alpha_7),
\\-\alpha_3(\alpha_1+\alpha_2+\alpha_3)\alpha_5(\alpha_3+\alpha_4)\alpha_2\alpha_6(\alpha_2+\alpha_3+\alpha_4+\alpha_5+\alpha_6)(\alpha_6+\alpha_7),
\\\alpha_3\alpha_2\alpha_1\alpha_5(\alpha_4+\alpha_5)\alpha_6(\alpha_3+\alpha_4+\alpha_5+\alpha_6)(\alpha_5+\alpha_6+\alpha_7), \\
-\alpha_3(\alpha_2+\alpha_3)\alpha_1\alpha_5(\alpha_3+\alpha_4+\alpha_5)\alpha_6(\alpha_3+\alpha_4+\alpha_5+\alpha_6)(\alpha_5+\alpha_6+\alpha_7),\\
 -(\alpha_2+\alpha_3)(\alpha_1+\alpha_2+\alpha_3)\alpha_5(\alpha_3+\alpha_4+\alpha_5)\alpha_2\alpha_6(\alpha_3+\alpha_4+\alpha_5+\alpha_6)(\alpha_5+\alpha_6+\alpha_7), \\
 -\alpha_3\alpha_2(\alpha_1+\alpha_2)\alpha_5(\alpha_4+\alpha_5)\alpha_6(\alpha_2+\alpha_3+\alpha_4+\alpha_5+\alpha_6)(\alpha_5+\alpha_6+\alpha_7), \\
 \alpha_3(\alpha_1+\alpha_2+\alpha_3)\alpha_5(\alpha_3+\alpha_4+\alpha_5)\alpha_2\alpha_6(\alpha_2+\alpha_3+\alpha_4+\alpha_5+\alpha_6)(\alpha_5+\alpha_6+\alpha_7)\big)
\end{multline*}}
and zeros elsewhere.

Consider the triple scalar product of rows: $(a,b,c)=\sum_{i=1}^{29}a_ib_ic_i$. A (computer) calculation
shows that $(r_{29},r_{29},p)=2$. Thus we have proved that the defect of the inclusion
$\X_x(k)\hookrightarrow\X^x(k)$ divided by $e_x(k)$ is $0$ if ${\rm char}\,k=2$ and is $v^{-8}$ otherwise.
By Theorem~\ref{theorem:5}, we get
$$
\sum_{d\in\Z}m(x,d)v^{-d-8}=\left\{\begin{array}{ll}
0&\text{ if }{\rm char}\,k=2;\\
v^{-8}&\text{ otherwise.}
\end{array}\right.
$$
Hence we get the following result.

\begin{theorem}\label{theorem:6} Let $G=\SL_8(\C)$ and $\Pi=\{\alpha_1,\ldots,\alpha_7\}$ be the set of simple roots.
Consider the Bott-Samelson variety $\Sigma$ for the sequence
$s=(s_3,s_2,s_1,s_5,s_4,s_3,s_2,s_6,s_5,s_4,s_3,s_7,s_6,s_5)$ and take $x=s_2s_3s_2s_5s_6s_5$.
Let $\Sigma\to G/B$ be the canonical resolution and $k$ be a field.
If ${\rm char}\,k=2$, then $\pi_*\csh{k}{\Sigma}[14]$ has no direct summand of the form $\E(x,k)[d]$.
If ${\rm char}\,k\ne2$, then $\E(x,k)$ is its only direct summand of this form. Moreover, it occurs with multiplicity $1$.
%$$
%\pi_*\csh{k}{\Sigma}[14]=\left\{\begin{array}{ll}
%\bigoplus_{y\in W\setminus\{x\}}\bigoplus_{d\in\Z}\E(x,k)[-d-14]^{\oplus m(x,d)}&\text{ if }{\rm char}\,k=2;\\
%???&\text{ otherwise.}
%\end{array}\right.
%$$
\end{theorem}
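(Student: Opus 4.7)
The plan is to combine Theorem~\ref{theorem:5} with the explicit description of the costalk-to-stalk embedding from Proposition~\ref{proposition:5} and the matrix data displayed immediately before the statement. First, one identifies the relevant numerical inputs: the element $x=s_2s_3s_2s_5s_6s_5$ is reduced of length $d_x=6$ and the given sequence has length $r=14$, so Theorem~\ref{theorem:5} rewrites the multiplicity generating function as
\[
\sum_{d\in\Z}m(x,d)\,v^{-d-8},
\]
equal to the defect of the embedding $\X_x(k)\hookrightarrow\X^x(k)$ divided by the Euler class $e_x(k)$.

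Next, Proposition~\ref{proposition:5} describes this embedding via the transition matrix $(H_{x,k}^{-1})^T P_{x,k} H_{x,k}^{-1}$, all of whose entries are divisible by $e_x(k)$. Writing $M=(H_{x,k}^{-1})^T (P_{x,k}/e_x(k)) H_{x,k}^{-1}$, the defect of the embedding divided by $e_x(k)$ is the graded dimension of the image of the reduction of $M$ modulo $\m$, computed basis-wise via the rank formula of~\cite[Corollary~3.3.3]{Shchigolev} cited in the text.

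The heart of the argument is the diagonal entry
\[
M_{29,29}=\sum_{i=1}^{29}p_i\,r_{29,i}^2=(r_{29},r_{29},p),
\]
where the $29^{\text{th}}$ basis element corresponds to the maximum $\gamma_{\max}\in\Gamma_x$ with respect to $<$ and produces the minimal-degree row of $(H_x^{-1})^T$ as noted in the text. A direct arithmetic evaluation of this triple scalar product, using the explicit expressions of $r_{29}$ and $p$ displayed above, yields $(r_{29},r_{29},p)=2\in S$. Reducing modulo $\m$, this entry becomes $2\cdot 1_k$, which vanishes precisely when $\mathrm{char}\,k=2$.

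It remains to verify that no other diagonal entry of $M$ contributes a nonzero degree-zero component, so that the defect is concentrated in the single monomial $v^{-8}$ when $\mathrm{char}\,k\ne 2$ and is $0$ in characteristic~$2$. Applying Theorem~\ref{theorem:5} then extracts $m(x,0)=1$ with all other $m(x,d)=0$ in characteristic $\ne 2$, and $m(x,d)=0$ for all $d$ when $\mathrm{char}\,k=2$, giving the claimed statement. The main obstacle is the sheer bookkeeping of the computation: one must verify the entries of the explicit $29\times 29$ matrix $H_x$, invert and transpose it to extract $r_{29}$, and confirm $(r_{29},r_{29},p)=2$ as well as the vanishing of the degree-zero parts of all other diagonal entries of $M$; this is purely mechanical but error-prone, and is best carried out with a computer algebra system.
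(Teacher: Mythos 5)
Your proposal is correct and follows essentially the same route as the paper: reduce via Theorem~\ref{theorem:5} to computing the defect of $\X_x(k)\hookrightarrow\X^x(k)$ divided by $e_x(k)$, use Proposition~\ref{proposition:5} to realize this as the transition matrix $(H_{x,k}^{-1})^T P_{x,k} H_{x,k}^{-1}$, isolate the degree-zero entry $(r_{29},r_{29},p)$ and verify by computer that it equals $2$. The paper handles the remaining degree bookkeeping in the same terse way you do, by noting that $r_{29}$ is the minimal-degree row of $(H_x^{-1})^T$ and leaving the verification implicit.
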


\end{document}